\newcommand{\be}{\begin{equation}}
\newcommand{\ee}{\end{equation}}
\newcommand{\beq}{\begin{eqnarray}}
\newcommand{\eeq}{\end{eqnarray}}
\newtheorem{prop}{Proposition}[section]
\newtheorem{remark}[prop]{Remark}
\def\begeq{\begin{equation}}
\def\endeq{\end{equation}}
\def\odot{\setbox0=\hbox{$\bigcirc$}\relax \mathbin {\hbox
to0pt{\raise.5pt\hbox to\wd0{\hfil $\wedge$\hfil}\hss}\box0 }}
\numberwithin{equation} {section}
\numberwithin{equation}{section}
\newtheorem{theorem}{\bf Theorem}[section]
\newtheorem{proposition}[theorem]{\bf Proposition}
\newtheorem{definition}[theorem]{\bf Definition}
\newtheorem{lemma}[theorem]{\bf Lemma}
\begin{document}
\title[The Dirichlet problem for a class of Hessian quotient equations ]
 {The Dirichlet problem for a class of Hessian quotient \\ equations in Lorentz-Minkowski space $\mathbb{R}^{n+1}_{1}$}

\author{
 Ya Gao,~~YanLing Gao,~~Jing Mao$^{\ast}$}

\address{
Faculty of Mathematics and Statistics, Key Laboratory of Applied
Mathematics of Hubei Province, Hubei University, Wuhan 430062, China
}

\email{Echo-gaoya@outlook.com, 1766019437@qq.com, jiner120@163.com}

\thanks{$\ast$ Corresponding author}

\date{}
\maketitle
\begin{abstract}
In this paper, under suitable settings, we can obtain the existence
and uniqueness of solutions to a class of Hessian quotient equations
with Dirichlet boundary condition in Lorentz-Minkowski space
$\mathbb{R}^{n+1}_{1}$, which can be seen as a prescribed curvature
problem and a continuous work of \cite{GLM}.
\end{abstract}

\maketitle {\it \small{{\bf Keywords}: Spacelike hypersurfaces,
Lorentz-Minkowski space, Hessian quotient equations, curvature
estimates, Dirichlet boundary condition.

 }

{{\bf MSC 2020}: 35J60, 35J65, 53C50.}}

%%%%%%%%%%%%%%%%%%%%%%%%%%%%%%%%%%%%%%%%%%%%%%%%%%%%%%%%%%%%%%%%%%%%%%%%%%%%%%%%%%%%%%%%%%%%%%%% ½éÉÜ
\section{Introduction}
Throughout this paper, let $\mathbb{R}^{n+1}_{1}$ be the
$(n+1)$-dimensional ($n\geq2$) Lorentz-Minkowski space with the
following Lorentzian metric
\begin{eqnarray*}
\langle\cdot,\cdot\rangle_{L}=dx_{1}^{2}+dx_{2}^{2}+\cdots+dx_{n}^{2}-dx_{n+1}^{2}.
\end{eqnarray*}
In fact, $\mathbb{R}^{n+1}_{1}$ is an $(n+1)$-dimensional Lorentz
manifold with index $1$. Denote by
\begin{eqnarray*}
\mathscr{H}^{n}(1)=\{(x_{1},x_{2},\cdots,x_{n+1})\in\mathbb{R}^{n+1}_{1}|x_{1}^{2}+x_{2}^{2}+\cdots+x_{n}^{2}-x_{n+1}^{2}=-1~\mathrm{and}~x_{n+1}>0\},
\end{eqnarray*}
which is exactly the hyperbolic plane of center $(0,0,\ldots,0)$
(i.e., the origin of $\mathbb{R}^{n+1}$) and radius $1$ in
$\mathbb{R}^{n+1}_{1}$.

Assume that
 \begin{eqnarray} \label{G1}
\mathcal{G}:=\{(x,u(x))|x\in M^{n} \subset \mathscr{H}^{n}(1)\}
\end{eqnarray}
is a spacelike graphic hypersurfaces defined over some bounded piece
$M^{n} \subset \mathscr{H}^{n}(1)$, with the boundary $\partial
M^{n}$, of the hyperbolic plane $\mathscr{H}^{n}(1)$, where
$\sup_{M^n}\frac{|Du|}{u}\leq\rho<1$. Let $x$ be a point on
$\mathscr{H}^{n}(1)$ which is described by local coordinates
$\xi^{1},\ldots,\xi^{n}$, that is, $x=x(\xi^{1},\ldots,\xi^{n})$. By
the abuse of notations, let $\partial_i$ be the corresponding
coordinate vector fields on $\mathscr{H}^{n}(1)$ and
$\sigma_{ij}=g_{\mathscr{H}^{n}(1)}(\partial_i,\partial_j)$ be the
induced Riemannian metric on $\mathscr{H}^{n}(1)$. Of course,
$\{\sigma_{ij}\}_{i,j=1,2,\ldots,n}$ is also the metric on
$M^{n}\subset\mathscr{H}^{n}(1)$. Denote by\footnote{~Clearly, for
accuracy, here $D_{i}u$ should be $D_{\partial_{i}}u$. In the
sequel, without confusion and if needed, we prefer to simplify
covariant derivatives like this. In this setting,
$u_{ij}:=D_{j}D_{i}u$, $u_{ijk}:=D_{k}D_{j}D_{i}u$ mean
$u_{ij}=D_{\partial_{j}}D_{\partial_{i}}u$ and
$u_{ijk}=D_{\partial_{k}}D_{\partial_{j}}D_{\partial_{i}}u$,
respectively. } $u_{i}:=D_{i}u$, $u_{ij}:=D_{j}D_{i}u$, and
$u_{ijk}:=D_{k}D_{j}D_{i}u$ the covariant derivatives of $u$ w.r.t.
the metric $g_{\mathscr{H}^{n}(1)}$, where $D$ is the covariant
connection on $\mathscr{H}^{n}(1)$. Let $\nabla$ be the Levi-Civita
connection of $\mathcal{G}$ w.r.t. the metric
$g:=u^{2}g_{\mathscr{H}^{n}(1)}-dr^{2}$ induced from the Lorentzian
metric $\langle\cdot,\cdot\rangle_{L}$ of $\mathbb{R}^{n+1}_{1}$.
Clearly, the tangent vectors of $\mathcal{G}$ are given by
\begin{eqnarray*}
X_{i}=(1,Du)=\partial_{i}+u_{i}\partial_{r}, \qquad i=1,2,\ldots,n.
\end{eqnarray*}
The induced metric $g$ on $\mathcal{G}$ has the form
\begin{equation*}\label{g_{ij}}
g_{ij}=\langle X_{i},X_{j}\rangle_{L}=u^2\sigma_{ij}-u_{i}u_{j},
\end{equation*}
and its inverse is given by
\begin{equation*}\label{g^{ij}}
g^{ij}=\frac{1}{u^2}\left(\sigma^{ij}+\frac{u^i  u^j
}{u^2v^{2}}\right).
\end{equation*}
Then the future-directed timelike unit normal of $\mathcal{G}$ is
given by
\begin{eqnarray*}\label{nu}
\nu=\frac{1}{v}\left(\partial_r+\frac{1}{u^2}u^j\partial_j\right),
\end{eqnarray*}
where $u^{j}:=\sigma^{ij}u_{i}$ and $v:=\sqrt{1-u^{-2}|D u|^2}$ with
$D u$ the gradient of $u$. Of course, in this paper we use the
Einstein summation convention -- repeated superscripts and
subscripts should be made summation from $1$ to $n$. The second
fundamental form of $\mathcal{G}$ is
\begin{equation}\label{h_{ij}}
h_{ij}=-\langle\overline{\nabla}_{X_{j}}X_{i},\nu\rangle_{L}
=\frac{1}{v}\left(u_{ij}+u \sigma_{ij}-\frac{2}{u}{u_i u_j }\right),
\end{equation}
 with $\overline{\nabla}$ the covariant connection in
 $\mathbb{R}^{n+1}_{1}$.
Denote by $\lambda_{1},\lambda_{2},\ldots,\lambda_{n}$ the principal
curvatures of $\mathcal{G}$, which are actually the eigenvalues of
the matrix $(h_{ij})_{n\times n}$ w.r.t. the metric $g$. The
so-called \emph{$k$-th Weingarten curvature} at
$X=(x,u(x))\in\mathcal{G}$ is defined as
 \begin{eqnarray}  \label{kwc}
\sigma_{k}(\lambda_{1}, \lambda_{2}, \cdots,
\lambda_{n})=\sum\limits_{1\leq i_{1}<i_{2}<\cdots<i_{k}\leq
n}\lambda_{i_{1}}\lambda_{i_{2}}\cdots\lambda_{i_{k}}.
 \end{eqnarray}

%%%%%%%%%%%%%%%%%%%%%%%%%%%%%%%%%%%%%%%%%%%%%%%%%%%%%%%%%%%%%%%%%%%%%%%%%%%%%%%%%%%%%%%%%%%%%%%%%%%%%%%% ¶¨Òå1.1
We also need the following conception:
\begin{definition}
For $1\leq k\leq n$, let $\Gamma_{k}$ be a cone in $ \mathbb{R}^{n}$
determined by
\begin{eqnarray*}
\Gamma_{k}=\{\lambda\in\mathbb{R}^{n}|\sigma_{j}(\lambda)>0,
~~j=1,2,\ldots,k\}.
\end{eqnarray*}
A smooth spacelike graphic hypersurface
$\mathcal{G}\subset\mathbb{R}^{n+1}_{1}$ is called $k$-admissible if
at every point $X\in\mathcal{G}$,
 $(\lambda_{1},\lambda_{2},\ldots,\lambda_{n})\in\Gamma_{k}$.
\end{definition}

\begin{remark}
\rm{ We kindly refer readers to \cite[Remark 1.1]{GLM} for a brief
introduction to the $\sigma_{k}$ operator on
$\mathcal{G}\subset\mathbb{R}^{n+1}_{1}$ and our previous works
(see, e.g., \cite{GaoY2,gm2,gm21,gm3,glm}) related to this operator.
}
\end{remark}

In this paper, we investigate the existence and uniqueness of
solutions for a class of nonlinear partial differential equations
(PDEs for short) given as follows
\begin{equation}\label{main equation}
\left\{
\begin{aligned}
& \frac{\sigma_{k}}{\sigma_{l}}=\psi(x,u,\vartheta), \qquad &&x\in
M^{n}\subset \mathscr{H}^{n}(1)\subset\mathbb{R}^{n+1}_{1},\quad
2\leq k\leq n,\quad 0\leq l\leq k-2,
\\
&u=\varphi, \qquad&&x\in \partial M^{n},
\end{aligned}
\right.
\end{equation}
where $\psi$, depending on $X:=(x,u)$, $\vartheta:=-\langle X,
\nu\rangle_{L}$, and $\varphi$ are functions defined on $M^{n}$.

\begin{remark}
\rm{ (1) Obviously, when $l=0$, the LHS of the first equation in
(\ref{main equation}) degenerates into $\sigma_{k}(\lambda_{1},
\lambda_{2}, \cdots, \lambda_{n})$ exactly -- the $k$-th Weingarten
curvature of $\mathcal{G}$, and then the system (\ref{main
equation}) becomes the prescribed curvature problem (PCP for short)
studied in \cite{GLM} with\footnote{~The case $k=1$ has also been
discussed in \cite{GLM}.} $k=2,3,\cdots,n$. In this sense, we also
say that (\ref{main equation}) is a PCP and should be a continuity
of our previous work \cite{GLM}. \\
 (2) In fact, our experience in  \cite{GLM} tells that if $M^{n}$ in the PCP (\ref{main equation}) was
 replaced by
 bounded domain
 $\Omega\subset\mathbb{R}^{n}\subset\mathbb{R}^{n+1}_{1}$, then similar
 existence and uniqueness conclusion (as in Theorem \ref{main 1.3} below)
 for the new PCP
 could be expected provided suitable assumptions made to the positive function $\psi$ and the Dirichlet boundary condition (DBC for short)
 $\varphi$. We prefer to leave this problem as an exercise for
 readers since the possible a priori estimates can be carried out by
 using the ones (we have shown here) for reference. \\
 (3) The PCPs (with or without boundary condition) in Euclidean space
or even more general Riemannian manifolds were extensively studied
-- see, e.g., \cite{cns1,cns2,gm,WJJ} and the references therein for
details. Affected by the study of Geometry of Submanifolds, it is
natural to consider PCPs in the pseudo-Riemannian context. In fact,
 many other
important results on PCPs in pseudo-Riemannian manifolds have been
obtained. For instance, in the Lorentz-Minkowski space or general
Lorentz manifolds, Bartnik \cite{b1}, Bartnik-Simon \cite{bs},
Gerhardt \cite{cg1,cg2} solved the Dirichlet problem for the
prescribed mean curvature equation, Delano\`{e} \cite{fd}, Guan
\cite{gb} considered the prescribed Gauss-Kronecker curvature
equation with DBC, while Bayard \cite{Bayard}, Gerhardt \cite{cg3},
Urbas \cite{ju2} worked for the prescribed scalar curvature
equation. From this brief introduction, one can see that when
considering the PCPs (with or without boundary condition) in either
Riemannian manifolds or pseudo-Riemannian manifolds, equations
involving the $\sigma_{k}$ operator of the second fundamental form
$A$ were considered a lot -- this is natural, since in
general\footnote{~ Clearly, $\sigma_{k}(\lambda(\cdot))$ denotes the
$k$-th elementary symmetric function of eigenvalues of a given
tensor -- the second fundamental form $A$. }
$\sigma_{k}(\lambda(A))$ corresponds to the $k$-th Weingarten
curvature of submanifolds in some geometric space considered. Our
research experience in curvature flows inspires us that maybe it is
possible to improve our previous existence and uniqueness result
\cite[Theorem 1.4]{GLM} to more general setting -- replacing
$\sigma_{k}(\lambda(A))$ by the $(k,l)$-Hessian quotient
$\frac{\sigma_{k}(\lambda(A))}{\sigma_{l}(\lambda(A))}$ of
$\lambda(A)$, with $2\leq k\leq n, 0\leq l\leq k-2$. However, in the
Riemannian context, Guan-Ren-Wang \cite{grw} showed that
$C^2$-estimates fail for the curvature equation of the form
\begin{eqnarray*}
\frac{\sigma_{k}(\lambda(A))}{\sigma_{l}(\lambda(A)}=f(X,\nu(X)),
\qquad \forall X\in N^{n}\subset\mathbb{R}^{n+1},
\end{eqnarray*}
 where $\nu(X)$ therein denotes the outward unit normal vector of
 closed
 hypersurface $N^{n}$ (in the Euclidean
 $(n+1)$-space $\mathbb{R}^{n+1}$) at $X$. This fact brings us
 negative attitude to consider the PCP (\ref{main equation}), and
 one might worries about that maybe same story happens in the
 pesudo-Riemannian context. But luckily, we \emph{almost} overcome this difficulty
 and successfully obtain the existence and uniqueness of solutions
 to the PCP (\ref{main equation}) for some special $k$ and $l$ -- see Theorem \ref{main 1.3}
 below for details.
}
\end{remark}

For the PCP \eqref{main equation}, first, we can get the following
curvature maximum estimate:

%%%%%%%%%%%%%%%%%%%%%%%%%%%%%%%%%%%%%%%%%%%%%%%%%%%%%%%%%%%%%%%%%%%%%%%%%%%%%%%%%%%%%%%%%%%%%%%%%%%%%%%%%%% ¶¨Àí1.2
\begin{theorem}\label{main1.1}
 Suppose that $u\in C^{4}(M^{n})\cap C^{2}(\overline{M^{n}} )$ is a spacelike, $k$-admissible solution of the PCP
 \eqref{main equation}, $0<\psi\in C^{\infty}(\overline{M^{n}}) $ and that $\psi^{\frac{1}{k-l}}(X,\vartheta)$ is convex in $\vartheta$ and satisfies
\begin{equation}\label{sf's conditions}
\frac{\partial \psi^{\frac{1}{k-l}}(X,\vartheta)}{\partial
\vartheta}\cdot \vartheta\geq\psi^{\frac{1}{k-l}}(X,\vartheta) \qquad
for~ fixed~ X\in\mathcal{G}.
\end{equation}
Then the second fundamental form $A$ of $\mathcal{G}$ satisfies
\begin{equation}\label{A's boundary}
\sup\limits_{M^{n}}||A||\leq C\left(1+\sup\limits_{\partial
M^{n}}||A||\right),
\end{equation}
where $C$ depends only on $n$, $||\varphi||_{C^{1}(\overline{M^{n}}
)}$, $||\psi||_{C^{2}\left(\overline{M^{n}}\times\left[\inf\limits_{\partial M^{n}}u,\sup\limits_{\partial M^{n}}u\right]\times\mathbb{R}\right)}$.\\
\end{theorem}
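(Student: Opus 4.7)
The plan is to establish \eqref{A's boundary} by a maximum principle argument applied to a test function dominating the largest principal curvature of $\mathcal{G}$. Since the hypothesis is that $\psi^{1/(k-l)}$ (rather than $\psi$) is convex in $\vartheta$, the natural first step is to rescale the operator and work with
\begin{equation*}
F:=\left(\frac{\sigma_k}{\sigma_l}\right)^{\!1/(k-l)},
\end{equation*}
which is positively homogeneous of degree one and, critically, concave on the admissible cone $\Gamma_k$. After this rescaling the PCP \eqref{main equation} reads $F(\lambda)=\psi^{1/(k-l)}(X,\vartheta)$, and it is the concavity of $F$ that keeps the third-order derivative terms manageable.

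I would then introduce the classical auxiliary function
\begin{equation*}
W(x)=\log\lambda_{\max}(x)+\alpha\,\omega(\vartheta)+\beta\,u(x),
\end{equation*}
with $\alpha,\beta>0$ and $\omega$ a one-variable function to be fixed later; the possible non-smoothness of $\lambda_{\max}$ where the top eigenvalue has multiplicity is handled by the usual perturbation trick. If $W$ attains its maximum on $\partial M^n$ the bound \eqref{A's boundary} is immediate. Otherwise, at an interior maximum point $x_0$, I would choose a frame so that $(h_{ij})$ is diagonal and $h_{11}=\lambda_{\max}$, and exploit $\nabla W(x_0)=0$ together with $F^{ij}\nabla_i\nabla_j W(x_0)\le 0$, where $F^{ij}:=\partial F/\partial h_{ij}$ is the (positive-definite) linearized operator on $\Gamma_k$. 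The main calculation is to expand $F^{ij}\nabla_i\nabla_j\log h_{11}$ via the Codazzi identity and the commutator formulas in the ambient Lorentzian space---these produce definite-sign curvature contributions from the hyperbolic base $\mathscr{H}^n(1)$---and then to differentiate $F=\psi^{1/(k-l)}$ twice in a direction aligned with the largest eigenvalue. The convexity of $\psi^{1/(k-l)}$ in $\vartheta$ enters with a favorable sign in the second derivative, and the structural inequality \eqref{sf's conditions} is used to cancel a residual $\vartheta$-derivative against $\beta u$.

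The decisive obstacle is controlling the third-order term $F^{ij,rs}h_{11;i}h_{rs;j}$: as the authors note, the Guan--Ren--Wang counterexample in \cite{grw} shows that in the Euclidean setting these contributions cannot be absorbed for general $l\le k-2$, so the argument must genuinely use Lorentzian features. My plan is to invoke the classical Andrews--Gerhardt--Urbas cancellation, which is available because of the concavity of $F$, and then to use $\nabla W(x_0)=0$ to convert the gradient terms $h_{11;i}/h_{11}$ into derivatives of $\alpha\,\omega(\vartheta)+\beta u$. The sign-definite terms generated by the curvature of $\mathscr{H}^n(1)$---already visible in formula \eqref{h_{ij}} for $h_{ij}$ in terms of $u$, $Du$, and $D^2u$---together with the spacelike condition $|Du|/u\le\rho<1$ and the boundedness of $u$ by the Dirichlet data $\varphi$, should allow one to pick $\omega$, $\alpha$, and $\beta$ (in terms of $n$, $\|\varphi\|_{C^1}$, $\|\psi\|_{C^2}$, and $\rho$) so that all remaining positive terms are absorbed, forcing $\lambda_{\max}(x_0)$ to be bounded. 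The most delicate point, and where the proof is likely to be longest, is tuning these parameters so that the third-order error from the Hessian-quotient structure and the first-order term coming from $\omega'(\vartheta)$ are swept up simultaneously; this is the Lorentzian analogue of the obstruction flagged in \cite{grw}, which the authors say they \emph{almost} overcome.
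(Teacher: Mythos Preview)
Your plan differs substantially from the paper's argument, and in a way that makes your route harder than it needs to be.

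The paper does \emph{not} use $\log\lambda_{\max}$ with auxiliary $\omega(\vartheta)$- and $u$-terms. Instead it takes the much simpler test function $W=\sigma_1(A)$, the trace of the second fundamental form. At an interior maximum of $\sigma_1$ one applies the identity from Lemma~\ref{g's formula},
\[
0\ge F^{ij}\nabla_i\nabla_j\sigma_1
=-F^{ij,pq}\nabla_l h_{ij}\nabla_l h_{pq}
+F^{ij}h_{im}h_{mj}\sigma_1-F^{ij}h_{ij}|A|^2+\Delta f,
\]
with $f=\psi^{1/(k-l)}$. The third-order term is $\ge 0$ \emph{directly} by concavity of $F$, so no Andrews--Gerhardt--Urbas cancellation or parameter tuning is required. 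The convexity of $f$ in $\vartheta$ and the formula $\Delta\vartheta=\sigma_1+\nabla^i\sigma_1\langle X,X_i\rangle_L+|A|^2\vartheta$ give $\Delta f\ge f_\vartheta\vartheta|A|^2-c_1\sigma_1-c_2$; then condition~\eqref{sf's conditions} yields $f_\vartheta\vartheta-f\ge 0$, which exactly cancels the bad term $F^{ij}h_{ij}|A|^2=f|A|^2$. What remains is $F^{ij}h_{im}h_{mj}\sigma_1\le c_1\sigma_1+c_2$, and a lower bound $F^{ij}h_{im}h_{mj}\ge c\,\sigma_1^2$ finishes the argument.

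Your approach---$\log\lambda_{\max}$ plus barrier terms---is the standard machinery for curvature equations where the structural condition~\eqref{sf's conditions} is absent, and it could presumably be pushed through here. But you have misread where the difficulty lies. The Guan--Ren--Wang obstruction and the authors' ``almost'' remark concern the \emph{boundary} double-normal estimate in Section~6 (forcing $k=2$, $l=0$ in Theorem~\ref{main 1.3}), not Theorem~\ref{main1.1}. For the interior curvature maximum principle, condition~\eqref{sf's conditions} is precisely what neutralizes the dangerous $|A|^2$ term, and once that is in hand the concavity of $F$ disposes of the third-order terms in one line. Your elaborate parameter-tuning step is therefore unnecessary for this theorem; save that machinery for Theorem~\ref{main1.2}, where the paper itself does use a test function of the form $\eta^\alpha e^{\Psi(\vartheta)}h_{11}$ and an Andrews-type splitting into cases.
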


\begin{remark}
\rm{ It is not hard to find some $\psi$ satisfying assumptions in
Theorem \ref{main1.1}. For instance, (i)
$\psi(x,u,\vartheta)=\vartheta^{p}h(x,u)$ for $p\geq k-l$; (ii)
$\psi(x,u,\vartheta)=e^{p\frac{\vartheta}{u}}h(x,u)$ for $p\geq
k-l$. }
\end{remark}

The following interior curvature estimate can also be obtained:

%%%%%%%%%%%%%%%%%%%%%%%%%%%%%%%%%%%%%%%%%%%%%%%%%%%%%%%%%%%%%%%%%%%%%%%%%%%%%%%%%%%%%%%%%%%%%%%%%%%%%%%%%%% ¶¨Àí1.3
\begin{theorem}\label{main1.2}
Suppose that $u\in C^{4}(M^{n})\cap C^{2}(\overline{M^{n}} )$ is a
spacelike, $k$-admissible solution of the PCP \eqref{main equation},
$0<\psi\in C^{\infty}(\overline{M^{n}}) $ and that
$\psi^{\frac{1}{k-l}}(X,\vartheta)$ is convex in $\vartheta$ and
satisfies
\begin{equation}\label{f's conditions}
\frac{\partial \psi^{\frac{1}{k-l}}(X,\vartheta)}{\partial
\vartheta}\cdot \vartheta>\psi^{\frac{1}{k-l}}(X,\vartheta) \qquad
for~ fixed~ X\in\mathcal{G}.
\end{equation}
Furthermore, suppose that $M^{n}\subset\mathscr{H}^{n}(1)$ is
$C^{2}$ and uniformly convex, and that $\varphi$ is spacelike and
affine. If $u\in C^{4}(M^{n})$ is a spacelike, $k$-admissible
solution of the PCP \eqref{main equation}, then
\begin{equation*}\label{f's condition}
\sup\limits_{\widetilde{M^{n}}}|A|\leq C\left(\widetilde{M^{n}}\right)
\end{equation*}
for any $\widetilde{M^{n}}\subset\subset M^{n}$, where $C\left(\widetilde{M^{n}}\right)$ depends only on $n$, $\zeta$, $M^{n}$, $\mathrm{dist}(\widetilde{M^{n}},\partial M^{n})$, $||\varphi||_{C^{1}(\overline{M^{n}})}$, and $||\psi||_{C^{2}\left(\overline{M^{n}}\times\left[\inf\limits_{\partial M^{n}}u,\sup\limits_{\partial M^{n}}u\right]\times\mathbb{R}\right)}$.
\end{theorem}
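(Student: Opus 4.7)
The plan is a Pogorelov-type maximum principle applied to a test function that combines the largest principal curvature with a cutoff vanishing on $\partial M^n$. The affine hypothesis on $\varphi$, together with uniform convexity of $M^n$, supplies the cutoff: any spacelike affine $\varphi$ extends to a totally geodesic spacelike hypersurface $\bar\varphi$ over all of $\mathscr{H}^n(1)$, whose second fundamental form vanishes identically. Since $\mathcal{G}$ is $k$-admissible, $\sigma_k/\sigma_l>0$, whereas the graph of $\bar\varphi$ is degenerate, so a standard comparison forces $\eta:=u-\bar\varphi>0$ on $M^n$ with $\eta=0$ on $\partial M^n$; a positive power of $\eta$ is then a natural Pogorelov cutoff.

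First I would consider the auxiliary function
\[
W=\log\!\left(\frac{h_{ij}\xi^i\xi^j}{g_{ij}\xi^i\xi^j}\right)+\alpha\log\eta+\Phi(u,v),
\]
over $M^n\times\{|\xi|=1\}$, where $\Phi(u,v)$ is an expression such as $-\beta\log v-\gamma u$ and the constants $\alpha,\beta,\gamma$ will be chosen to exploit (\ref{f's conditions}). Using the smooth quotient $h_{ij}\xi^i\xi^j/g_{ij}\xi^i\xi^j$ in place of $\lambda_{\max}$ circumvents non-differentiability at eigenvalue crossings. At a maximum point $x_0\in M^n$, I would rotate coordinates so that $(h_{ij})$ is diagonal and $h_{11}=\lambda_{\max}$. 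Because $\log\eta\to-\infty$ on $\partial M^n$, the maximum is necessarily interior.

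At $x_0$, the criticality $W_i=0$ yields $h_{11,i}/h_{11}=-\alpha\eta_i/\eta-\Phi_u u_i-\Phi_v v_i$, and $\sum F^{ii}W_{ii}\leq 0$, where $F^{ii}$ is the linearization of $F:=(\sigma_k/\sigma_l)^{1/(k-l)}$, positive definite on $\Gamma_k$. I would then compute $F^{ii}(\log h_{11})_{ii}$ via the Gauss--Codazzi equations for spacelike hypersurfaces in $\mathbb{R}^{n+1}_1$, producing $F^{ii}h_{11,ii}/h_{11}$ together with curvature commutator terms of the form $F^{ii}(h_{11}h_{ii}-h_{ii}^2)$ and gradient contributions stemming from the $1/v$ prefactor in (\ref{h_{ij}}). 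Differentiating $F(h_{ij})=\psi^{1/(k-l)}$ twice along $e_1$, invoking concavity of $F$, and substituting the first-order relation lets me express $F^{ii}h_{11,ii}/h_{11}$ in terms of $\psi$-derivatives and gradient squares; the strict inequality (\ref{f's conditions}) contributes a term of definite positive sign, which drives the estimate.

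The main obstacle is precisely the one flagged by the citation to \cite{grw}: for a general Hessian quotient curvature equation the bad third-order term cannot be controlled by the equation alone, and the naive $C^2$ estimate fails. Here the argument survives through three Lorentzian/structural ingredients: (i) the strict convexity hypothesis on $\psi^{1/(k-l)}$ in $\vartheta$ supplies a uniform positive gap in the ``good'' direction; (ii) the spacelike structure ($u>0$, $0<v<1$) together with the $1/v$ prefactor in $h_{ij}$ generates favorable signs when the linearized operator $F^{ii}\nabla_i\nabla_i$ is applied to $\Phi$; and (iii) the affine cutoff $\eta$ has a gradient that combines with $W_i=0$ to cancel otherwise uncontrollable gradient contributions, with convexity of $M^n$ preventing $|\nabla\eta|$ from exploding. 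Tuning $\alpha,\beta,\gamma$ so that every bad term is absorbed yields $\eta^\alpha\lambda_{\max}(x_0)\leq C$, and hence $\eta^\alpha\lambda_{\max}\leq C$ throughout $M^n$; restricting to any $\widetilde{M^n}\subset\subset M^n$ with $\eta\geq c(\widetilde{M^n})>0$ then yields the stated interior bound on $|A|$.
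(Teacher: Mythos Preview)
Your overall framework coincides with the paper's: a Pogorelov test function combining the largest curvature, a power of the affine cutoff $\eta$, and an auxiliary barrier. But your sketch has a genuine gap exactly where you flag the Guan--Ren--Wang obstruction. After the maximum-point computation one is left with two competing third-order pieces: the good concavity term $-\tfrac{1}{h_{11}}F^{ij,pq}\nabla_1 h_{ij}\nabla_1 h_{pq}$ from differentiating the equation, and the bad term $-F^{ii}|\nabla_i h_{11}|^2/h_{11}^2$ from differentiating $\log h_{11}$. Your proposal to ``invoke concavity of $F$'' simply discards the first, while substituting $W_i=0$ into the second converts it to a quantity of size $(\alpha^2/\eta^2)\,\mathrm{tr}\,F^{ii}$. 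None of your ingredients (i)--(iii) absorbs this unless $F^{ii}h_{ii}^2$ dominates $h_{11}^2\,\mathrm{tr}\,F^{ii}$, which is false in general for Hessian quotients.

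The paper's resolution is a two-case split. In Case~1 ($h_{nn}\le -\zeta h_{11}$) one has $F^{nn}h_{nn}^2\ge\tfrac{\zeta^2}{n}h_{11}^2\,\mathrm{tr}\,F^{ii}$ and the direct argument closes. In Case~2 ($h_{nn}\ge -\zeta h_{11}$) the indices are partitioned as $I=\{i:\mathcal{P}_i\le 4\mathcal{P}_1\}$ and $J=\{j:\mathcal{P}_j>4\mathcal{P}_1\}$; the $I$-part of the bad term is handled via $W_i=0$ (now controlled by $\mathcal{P}_1 h_{11}^2$), while the $J$-part is absorbed into the concavity term through the refined identity of Lemma~\ref{F_{ij}'s equation 1}, yielding the key claim
\[
-\tfrac{1}{h_{11}}F^{ij,pq}\nabla_1 h_{ij}\nabla_1 h_{pq}\;-\;[1+c\alpha^{-1}]\sum_{j\in J}\mathcal{P}_j\,\frac{|\nabla_j h_{11}|^2}{h_{11}^2}\;\ge\;0.
\]
This structural inequality is what actually circumvents the obstruction; your three ingredients do not supply it. Two smaller discrepancies: the paper takes $\eta=\varphi-u>0$ (not $u-\bar\varphi$), and the barrier is $\Psi(\vartheta)=-\log(2a+\vartheta)$ in the support function $\vartheta=-\langle X,\nu\rangle_L$ rather than a function of $(u,v)$; that specific choice is needed to ensure $\Psi'\vartheta+1+c_{14}\bigl(\Psi''-(1+\varepsilon)(\Psi')^2\bigr)\ge\tfrac12$.
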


Combining the above curvature estimates and the boundary
$C^{2}$-estimates proven in the sequel, we can get the following
result:

%%%%%%%%%%%%%%%%%%%%%%%%%%%%%%%%%%%%%%%%%%%%%%%%%%%%%%%%%%%%%%%%%%%%%%%%%%%%%%%%%%%%%%%%%%%%%%%%%%%%%%%%% ¶¨Àí1.4
\begin{theorem}\label{main 1.3}
For $k=2$, $l=0$, suppose that $M^{n}$ is a smooth bounded domain of
$\mathscr{H}^{n}(1)$ and is strictly convex, while $\psi$ is a
smooth positive function and $\psi^{\frac{1}{2}}$ is convex in
$\vartheta$ satisfying
\begin{equation*}\label{f's condition}
\frac{\partial \psi^{\frac{1}{2}}(X,\vartheta)}{\partial
\vartheta}\cdot \vartheta\geq\psi^{\frac{1}{2}}(X,\vartheta) \qquad
for~ fixed~ X\in\mathcal{G}.
\end{equation*}
Then for any spacelike, affine function $\varphi$, there exist a uniquely smooth spacelike, $2$-admissible graphic hypersurface $\mathcal{G}$ (define over $M^{n}$) with
 the prescribed $(2,0)$-Hessian quotient
$\frac{\sigma_{2}(\lambda(A))}{\sigma_{0}(\lambda(A))}=\sigma_{2}(\lambda(A))=\psi$ and Dirichlet boundary data $\varphi$.\\
\end{theorem}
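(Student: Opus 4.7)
The plan is to prove Theorem \ref{main 1.3} by the classical continuity method, reducing the existence question to a closed a priori $C^{2,\alpha}$ estimate, and to obtain uniqueness from a direct maximum-principle argument. For $k=2$, $l=0$ the equation is simply $\sigma_2(\lambda(A))=\psi$, and the operator $F:=\sigma_2^{1/2}$ is concave on the admissible cone $\Gamma_2$. This concavity, together with the hypothesis that $\psi^{1/2}$ is convex in $\vartheta$ and satisfies the structural condition of Theorem \ref{main1.1}, is exactly what is needed to apply the Evans--Krylov theorem at the end and (via linearization) to make the comparison principle work at the start. For uniqueness, given two smooth $2$-admissible solutions $u_1,u_2$ with $u_1=u_2=\varphi$ on $\partial M^n$, I would interpolate $u_t=tu_1+(1-t)u_2$ and integrate $\frac{d}{dt}F(u_t)$ from $0$ to $1$ to produce a linear elliptic equation for $w=u_1-u_2$ with zero boundary data; the structural condition \eqref{sf's conditions} ensures that the zeroth-order coefficient has the right sign, and the maximum principle forces $w\equiv 0$.

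For existence, I set up the continuity family
\begin{equation*}
\sigma_2(\lambda(A^t)) = \psi_t(x,u^t,\vartheta^t),\qquad u^t|_{\partial M^n}=\varphi,
\end{equation*}
where $\psi_t$ interpolates between a convenient right-hand side at $t=0$ admitting the affine extension of $\varphi$ as a solution, and $\psi_1=\psi$; and define $I=\{t\in[0,1]:\text{the problem admits a spacelike, }2\text{-admissible }C^{4,\alpha}\text{ solution}\}$. The target is $I=[0,1]$. Openness is standard via the implicit function theorem, since the linearization of $F$ at an admissible solution is a uniformly elliptic linear operator invertible under Dirichlet data; the content is closedness. The $C^0$ bound follows from barrier comparison with the affine function $\varphi$ and a suitable constant or cone-type solution. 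The $C^1$ estimate must preserve the spacelike condition $|Du|/u\leq\rho<1$; by the maximum principle applied to $|Du|^2/u^2$ interior gradient bounds reduce to boundary gradient bounds, which are supplied by the affine function $\varphi$ serving as a two-sided barrier along $\partial M^n$.

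The technical heart of the argument is the boundary $C^2$ estimate, which the authors defer to later in the paper. At a boundary point $x_0\in\partial M^n$, introduce a local frame with $e_n$ normal to $\partial M^n$ and split the Hessian into tangential-tangential, tangential-normal, and normal-normal parts. The tangential-tangential estimate is automatic since $u-\varphi$ vanishes on $\partial M^n$, so all pure tangential second derivatives at $x_0$ are determined by the geometry of $\partial M^n$ and by $\varphi$. The mixed estimate is obtained by constructing a barrier of the form $\Theta=(\varphi-u)+t\,d-N d^2$, where $d=\mathrm{dist}(\cdot,\partial M^n)$, and using strict convexity of $\partial M^n$ to arrange that the linearized operator $L$ of $\sigma_2^{1/2}$ satisfies $L\Theta\leq -C$ near $x_0$; comparison then yields $|u_{\alpha n}(x_0)|\leq C$. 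The pure normal estimate $u_{nn}(x_0)\leq C$ is the subtlest: one bounds the largest eigenvalue of the boundary Hessian via admissibility and concavity of $\sigma_2^{1/2}$, reducing matters to a uniform positive lower bound on $\psi$ and on the principal curvatures of $\partial M^n$, which is where strict convexity of $M^n$ and the affineness of $\varphi$ enter decisively. Once the boundary $C^2$ bound is in hand, Theorem \ref{main1.1} converts it to a global $C^2$ bound; concavity of $F$ and Evans--Krylov upgrade this to $C^{2,\alpha}$; Schauder bootstrap gives $C^{4,\alpha}$, closing the continuity method.

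The main obstacle I foresee is the double-normal boundary estimate; such estimates are notoriously delicate for Hessian-quotient type equations, and, as the authors note via the Guan--Ren--Wang counterexample \cite{grw} in the Riemannian setting, can fail outright for general $k,l$. The geometric hypotheses (strict convexity of $M^n$, spacelike affine $\varphi$, the convexity and structural conditions on $\psi^{1/2}$) must combine precisely to rule out this pathology, and verifying that interplay at the boundary is the crux of the argument.
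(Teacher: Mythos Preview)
Your overall strategy---continuity method reduced to uniform $C^{2,\alpha}$ estimates, with uniqueness from the maximum principle---matches the paper's, and you correctly locate the double-normal boundary estimate as the crux. The paper, however, carries out the boundary $C^{2}$ estimates by a substantially different mechanism than the CNS-style barrier $\Theta=(\varphi-u)+t\,d-Nd^{2}$ you sketch. Following Ivochkina and Bayard, the authors introduce an auxiliary function $\mathbb{W}=\mathfrak{g}(x,Du)-\tfrac{\mathcal{B}}{2}\sum_{s<n}(u_s-u_s(x_0))^2$ with $\mathfrak{g}$ concave in the gradient slot, prove a differential inequality for it (Lemma~\ref{lemma 6.1}), and then compare an exponential transform $\widetilde{\mathbb{W}}$ against a carefully built subsolution $\tilde{v}=-a_0|x-x_0|^2-\mathcal{B}_3(1-e^{-\mathcal{B}_4 d})+\tilde\psi(x')$ (Lemmas~\ref{lemma 6.2}--\ref{lemma 6.4}); this yields the mixed derivatives. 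For the double-normal derivative they do not argue directly via ``admissibility and concavity'' as you propose, but rather show (Lemma~\ref{lemma 6.5}) that $\partial(\sigma_k/\sigma_l)/\partial h_{nn}$ equals a boundary quantity $\mathcal{F}_{k-1,l-1}(\partial\varphi,\partial^2\varphi+u_\gamma\partial\gamma)$, minimize this over $\partial M^n$, and feed the minimum point back into the Lemma~\ref{lemma 6.1} machinery with $\mathfrak{g}(x,p)=\mathcal{F}_{k-1,l-1}(\ldots)$. The restriction $k=2$, $l=0$ enters precisely because only then is this $\mathfrak{g}$ concave in $p$, which is the hypothesis of Lemma~\ref{lemma 6.1}.

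Two smaller divergences: for the $C^{1}$ boundary estimate the paper does not use the affine $\varphi$ as a two-sided barrier, but instead compares $u$ with solutions $s^{\pm}$ of auxiliary $\sigma_2$ and $\sigma_1$ Dirichlet problems (whose existence is taken from \cite{GLM}); and the interior gradient bound is obtained not from $|Du|^2/u^2$ directly but from a maximum principle for $\mathcal{W}e^{S\pi}$ with $\pi=\log u$. Your route is plausible for $\sigma_2$ in this geometry and would likely go through, but the paper's Ivochkina--Bayard machinery is what actually pins down why the argument stalls at $k\ge 3$: the obstruction is concavity of a specific $\mathfrak{g}$ in $p$, not a failure of a generic CNS barrier.
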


\begin{remark}
\rm{ (1) Although nearly the whole part of
  the a priori estimates works for $2\leq k\leq n,0\leq l\leq k-2$, the estimate for the double
  normal second derivatives
on the boundary only works for $k=2$, which leads to the situation
that so far, we can get the existence and uniqueness of solutions to
the PCP \eqref{main equation} only for $k=2$, $l=0$. However, we do
hope that this restriction can be overcome in the future, i.e., the
estimate for the double
  normal second derivatives
on the boundary can also be obtained for $3\leq k\leq n$. If so,
that would be a surprising breakthrough. \\
 (2) Clearly, when $k=2$, $l=0$, the existence and uniqueness conclusion of Theorem \ref{main
 1.3} is the same with our previous result \cite[Theorem 1.4]{GLM}.
 But, the reason why we still write down this paper is that the a
 priori estimates here (especially the part of the $C^2$ boundary estimates) is much complicated than the one in
 \cite{GLM}. In fact, because of this reason, we did not show the
 details of the $C^2$ boundary estimates for the solutions to the
 PCP considered in \cite{GLM}. \\
  (3) In our previous work \cite{GLM} and this paper, we insist on
  numbering (by subscripts) nearly all constants appearing in the process of doing a
  priori estimates, and we believe that this way can
  reveal the relations among constants clearly to readers.
}
\end{remark}

The paper is organized as follows. Some useful formulae  for
spacelike graphic hypersurfaces defined over
$M^{n}\subset\mathscr{H}^{n}(1)$ will be introduced in Section
\ref{S2}. These formulae have been proven carefully in \cite{GLM}.
Section \ref{c1es} devotes to the gradient estimate. Curvature
maximum principle will be shown in Section \ref{S4}. Interior
$C^{2}$-estimates will be proven in Section \ref{S5}, and boundary
$C^{2}$-estimates will be proven in Section 6, which, together with
the method of continuity, lead to the existence and uniqueness
result for the PCP \eqref{main equation}, i.e. Theorem \ref{main
1.3}.

%%%%%%%%%%%%%%%%%%%%%%%%%%%%%%%%%%%%%%%%%%%%%%%%%%%%%%%%%%%%%%%%%%%%%%%%%%%%%%%%%%%%%%%%%%%%%%%%%%%%%%% µÚ¶þ²¿·Ö
\section{Some elementary formulas} \label{S2}
For the spacelike graphic hypersurface
$\mathcal{G}\subset\mathbb{R}^{n+1}_{1}$ given by \eqref{G1} and
$X=(x,u(x))\in\mathcal{G}$, set $X_{,ij}:=\partial_i
\partial_j X-\Gamma_{ij}^{k}X_k$ with $\Gamma_{ij}^{k}$ the
Christoffel symbols of the metric on $\mathcal{G}$. Then it is easy
to know
\begin{eqnarray*} \label{sff-1}
h_{ij}=-\left\langle X_{,ij}, \nu\right\rangle_{L},
 \end{eqnarray*}
 and have the following identities
\begin{eqnarray} \label{gf-1}
&&(\mathrm{Gauss~formula}) \qquad \qquad \qquad X_{,ij}=h_{ij}\nu,
\label{Gauss
for} \\
&& (\mathrm{Weingarten~formula}) \qquad \qquad \nu_{,i}=h_{ij}X^{j}.
\label{Wein for}
\end{eqnarray}
By \cite[Section 2]{GaoY2}, we have
\begin{equation}\label{Gauss-1}
R_{ijkl}=h_{il}h_{jk}-h_{ik}h_{jl},
\end{equation}
\begin{equation}\label{Codazzi-1}
\nabla_{k}h_{ij}=\nabla_{j}h_{ik}, \qquad (i.e.,~h_{ij,k}=h_{ik,j})
\end{equation}
and
\begin{eqnarray}\label{Laplace}
\Delta h_{ij}=H_{,ij}-Hh_{ik}h^{k}_{j}+h_{ij}|A|^{2}.
\end{eqnarray}
As usual, here the comma ``," in subscript of a given tensor means
doing covariant derivatives. Besides, we make an agreement that, for
simplicity, in the sequel the comma ``," in subscripts will be
omitted unless necessary. BTW, formulae (\ref{sff-1})-(\ref{Laplace}) have also been mentioned in our previous works \cite{gm2,gm21,gm3,GLM}. \\

For any equation
\begin{equation}\label{F's equations}
F(A)=f(\lambda_{1},\lambda_{2},\ldots,\lambda_{n}),
\end{equation}
where $A$ is the second fundamental form of the spacelike graphic
hypersurface $\mathcal{G}\subset\mathbb{R}^{n+1}_{1}$. We can prove
the following two conclusions:

\begin{proposition}\label{elliptic}
Let $\mathcal{G}$, defined by (\ref{G1}), be a smooth $k$-admissible
spacelike graphic hypersurface in $\mathbb{R}^{n+1}_{1}$, $0\leq
l\leq k-2$, $2\leq k\leq n$. Then the operator\footnote{~Here, for
accuracy, $\frac{\sigma_{k}}{\sigma_{l}}[u]$ should be
$\frac{\sigma_{k}}{\sigma_{l}}(\lambda(A(u)))$. We write as
$\frac{\sigma_{k}}{\sigma_{l}}[u]$ for the purpose of simplifying
and emphasizing that $A$ and its eigenvalues depend on the graphic
function $u$. Based on this fact, if necessary, sometimes we also
write $\frac{\sigma_{k}}{\sigma_{l}}(\lambda(A(u)))$ (or
$\frac{\sigma_{k}}{\sigma_{l}}$) as
$\frac{\sigma_{k}}{\sigma_{l}}[u]$ to emphasize this connection.
This simplification will be used similarly in the sequel.}
$$\left(\frac{\sigma_{k}}{\sigma_{l}}[u]\right)^{\frac{1}{k-l}}$$
is elliptic.
\end{proposition}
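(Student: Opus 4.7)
The plan is to reduce the ellipticity statement to a standard Newton--Maclaurin inequality on the admissible cone $\Gamma_k$. Ellipticity of the fully nonlinear operator $F:=(\sigma_k/\sigma_l)^{1/(k-l)}$ at a $k$-admissible spacelike solution $u$ means positive-definiteness of the matrix $F^{ij}:=\partial F/\partial h_{ij}$; since by (\ref{h_{ij}}) the second fundamental form $h_{ij}$ is an affine function of the second covariant derivatives $u_{ij}$ with the positive multiplier $1/v$, this is equivalent to ellipticity of the associated PDE operator $u\mapsto F(A(u))$ in $u_{ij}$.

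First I would pick a point $X\in\mathcal{G}$ and work in a $g$-orthonormal frame in which $h^i_{\,j}=g^{ik}h_{kj}$ is diagonal with eigenvalues $\lambda=(\lambda_1,\dots,\lambda_n)\in\Gamma_k$. In such a frame $F^{ij}$ is diagonal as well, and the proposition reduces to the pointwise inequalities
$$\frac{\partial f}{\partial\lambda_i}>0,\qquad i=1,\dots,n,$$
where $f(\lambda)=\bigl(\sigma_k(\lambda)/\sigma_l(\lambda)\bigr)^{1/(k-l)}$ is the symmetric function of $\lambda$ induced by $F$.

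Next I would compute this derivative directly. Write $\sigma_m(\lambda|i)$ for $\sigma_m$ evaluated on the $n-1$ variables obtained by deleting $\lambda_i$; one has the standard identities $\partial\sigma_m/\partial\lambda_i=\sigma_{m-1}(\lambda|i)$ and $\sigma_m(\lambda)=\sigma_m(\lambda|i)+\lambda_i\sigma_{m-1}(\lambda|i)$. A short calculation then gives
$$\frac{\partial f}{\partial\lambda_i}=\frac{f^{\,1-(k-l)}}{(k-l)\,\sigma_l(\lambda)^2}\Bigl(\sigma_{k-1}(\lambda|i)\,\sigma_l(\lambda|i)-\sigma_k(\lambda|i)\,\sigma_{l-1}(\lambda|i)\Bigr),$$
so it remains to verify the symmetric-function inequality
$$\sigma_{k-1}(\mu)\,\sigma_l(\mu)>\sigma_k(\mu)\,\sigma_{l-1}(\mu),\qquad 0\leq l\leq k-2,$$
for $\mu=(\lambda|i)\in\mathbb{R}^{n-1}$.

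The main (and only non-trivial) obstacle is this last inequality; however $\lambda\in\Gamma_k$ forces $\mu\in\Gamma_{k-1}\subset\Gamma_l$, so the required estimate is a classical Newton--Maclaurin-type inequality on the G\aa rding cone (see, e.g., Lin--Trudinger, or Chapter XV of Lieberman), and the case $l=0$ is immediate from $\sigma_{-1}\equiv 0$ together with $\sigma_{k-1}(\mu)>0$ on $\Gamma_{k-1}$. Since the prefactor $f^{\,1-(k-l)}/\bigl((k-l)\sigma_l(\lambda)^2\bigr)$ is manifestly positive on $\Gamma_k$, every diagonal entry of $F^{ij}$ is strictly positive, which gives ellipticity. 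This is precisely the reason the $(k-l)$-th root is taken in the definition of the operator, and it is the standard justification for ellipticity in the $(k,l)$-Hessian quotient literature.
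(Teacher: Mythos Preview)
Your argument is correct and follows essentially the same route as the paper: both reduce ellipticity to the positivity of
\[
\sigma_{k-1}(\lambda|i)\,\sigma_l(\lambda|i)-\sigma_k(\lambda|i)\,\sigma_{l-1}(\lambda|i)
\]
and then invoke the generalized Newton--Maclaurin inequality. The one point worth tightening is your appeal to Newton--Maclaurin ``on the G\aa rding cone'': you have only $\mu=(\lambda|i)\in\Gamma_{k-1}$, whereas the inequality in its usual form requires $\mu\in\Gamma_k$. The paper makes the missing case split explicit: if $\sigma_k(\lambda|i)\le 0$ the displayed quantity is trivially positive (the other three factors are positive on $\Gamma_{k-1}$), while if $\sigma_k(\lambda|i)>0$ then $\mu\in\Gamma_{k-1}\cap\{\sigma_k>0\}=\Gamma_k$ and Newton--Maclaurin applies.
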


\begin{proof}
To prove the ellipticity of the operator $\sigma_{k}/\sigma_{l}$, it
is equivalent to prove
$$\frac{\partial}{\partial\lambda_{i}}\left(\frac{\sigma_{k}}{\sigma_{l}}[u]\right)>0 \qquad \mathrm{for~~all}~~i=1,2,\cdots,n,$$
where $\lambda_{i}'s$ are the principal curvatures of $\mathcal{G}$.

By direct calculation, we have
\begin{equation*}
\begin{split}
\frac{\partial}{\partial\lambda_{i}}\left(\frac{\sigma_{k}}{\sigma_{l}}[u]\right)&=\frac{\sigma_{k-1}(\lambda|i)\sigma_{l}-\sigma_{k}\sigma_{l-1}(\lambda|i)}{\sigma_{l}^{2}} \\
&=
\frac{\sigma_{k-1}(\lambda|i)\sigma_{l}(\lambda|i)-\sigma_{k}(\lambda|i)\sigma_{l-1}(\lambda|i)}{\sigma_{l}^{2}}.
\end{split}
\end{equation*}
If $\sigma_{k}(\lambda|i)<0$, this proposition follows. If
$\sigma_{k}(\lambda|i) > 0$, then one has $\sigma_{m}(\lambda|i)>0$,
$m=1,2,\cdots,k-1$. So, using the generalized Newton-Maclaurin
inequality (see, e.g., \cite{mt1,t2}), we have
\begin{equation*}
\frac{\sigma_{k}(\lambda|i)/C_{n}^{k}}{\sigma_{l}(\lambda|i)/C_{n}^{l}}
\leq
\frac{\sigma_{k-1}(\lambda|i)/C_{n}^{k-1}}{\sigma_{l-1}(\lambda|i)/C_{n}^{l-1}},
\end{equation*}
which implies
\begin{equation*}
\frac{\sigma_{k}(\lambda|i)}{\sigma_{l}(\lambda|i)} <
\frac{\sigma_{k}(\lambda|i)}{\sigma_{l}(\lambda|i)}\cdot\frac{k}{l}\cdot\frac{n-l+1}{n-k+1} \leq \frac{\sigma_{k-1}(\lambda|i)}{\sigma_{l-1}(\lambda|i)}.
\end{equation*}
Then we have
$$\sigma_{k-1}(\lambda|i)\sigma_{l}(\lambda|i)-\sigma_{k}(\lambda|i)\sigma_{l-1}(\lambda|i) >0, $$
and the ellipticity of the operator $\sigma_{k}/\sigma_{l}$ follows
directly.
\end{proof}

By \cite[Lemmas 2.2 and 2.3]{GLM}, we have:

\begin{lemma}\label{g's formula}
For the function $F$ defined by \eqref{F's equations} and the quantity $\vartheta$ given in the PCP \eqref{main equation}, one has
\begin{equation*}\label{Codazzi-1}
F^{ij}\nabla_{i}\nabla_{j}\nu = \nu F^{ij}h^{m}_{j}h_{im}+F^{ij}\nabla_{i}h^{m}_{j}X_{m},
\end{equation*}
\begin{equation*}\label{Codazzi-1}
\Delta\vartheta=\sigma_{1}+\nabla^{i}\sigma_{1}\langle X,X_{i}\rangle_{L}+|A|^{2}\vartheta,
\end{equation*}
and
\begin{equation*}\label{F's equation}
F^{ij}\nabla_{i}\nabla_{j}\sigma_{1} = -F^{ij,pq}\nabla^{k}h_{ij}\nabla_{k}h_{pq}+F^{ij}h^{m}_{j}h_{im}\sigma_{1}-F^{ij}h_{ij}|A|^{2}+\Delta f,
\end{equation*}
\begin{equation*}\label{F's equation}
F^{ij}\nabla_{i}\nabla_{j}h_{mn} = -F^{ij,pq}\nabla_{n}h_{ij}\nabla_{m}h_{pq}+F^{ij}h^{l}_{j}h_{il}h_{mn}-F^{ij}h^{l}_{m}h_{ln}h_{ij}+
\nabla_{m}\nabla_{n}f,
\end{equation*}
where $F_{ij}:=\partial F/\partial h_{ij}$,
$F^{ij,pq}:=\partial^{2}F/\partial h_{ij}\partial h_{pq}$.
\end{lemma}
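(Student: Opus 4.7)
All four identities are tensor-calculus consequences of three inputs: (a) the ambient Gauss/Weingarten formulas \eqref{Gauss for}--\eqref{Wein for}, (b) the intrinsic Gauss/Codazzi equations \eqref{Gauss-1}--\eqref{Codazzi-1}, and (c) differentiating the equation $F(A)=f$ twice. I would proceed in the order (1), (2), (4), (3), since (3) follows from (4) by tracing.

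\textbf{Identity (1).} Apply $\nabla_i$ to the Weingarten relation $\nu_{,j}=h_j^{\,k}X_k$ and use the Gauss formula $\nabla_i X_k = h_{ik}\nu$ to get
\[
\nabla_i\nabla_j\nu = (\nabla_i h_j^{\,k})\,X_k + h_j^{\,k} h_{ik}\,\nu.
\]
Contracting with $F^{ij}$ is exactly the stated identity.

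\textbf{Identity (2).} Differentiate $\vartheta=-\langle X,\nu\rangle_L$ once, using $\langle X_i,\nu\rangle_L=0$ (tangent vs.\ normal) and Weingarten, to obtain $\vartheta_i = -h_i^{\,k}\langle X,X_k\rangle_L$. Differentiate a second time, using $\langle X_j,X_k\rangle_L = g_{jk}$ and the Gauss formula to evaluate $\langle X,\nabla_j X_k\rangle_L = h_{jk}\langle X,\nu\rangle_L = -h_{jk}\vartheta$. This produces a formula for $\nabla_j\nabla_i\vartheta$ with three terms; tracing with $g^{ij}$ and applying Codazzi to rewrite $g^{ij}\nabla_j h_i^{\,k}$ as $\nabla^k\sigma_1$, together with $g^{ij}h_i^{\,k}h_{jk}=|A|^2$, yields the claim.

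\textbf{Identities (4) and (3).} Differentiating $F(A)=f$ once gives $F^{ij}\nabla_n h_{ij}=\nabla_n f$, and differentiating again gives
\[
F^{ij}\nabla_m\nabla_n h_{ij} = -F^{ij,pq}(\nabla_m h_{pq})(\nabla_n h_{ij}) + \nabla_m\nabla_n f.
\]
The heart of (4) is to convert $\nabla_m\nabla_n h_{ij}$ into $\nabla_i\nabla_j h_{mn}$. By Codazzi \eqref{Codazzi-1}, $\nabla_n h_{ij}$ is totally symmetric in the three indices, so one can rewrite $\nabla_i\nabla_j h_{mn}=\nabla_i\nabla_m h_{jn}$ and then commute the outer pair via the Ricci identity
\[
\nabla_i\nabla_m h_{jn}-\nabla_m\nabla_i h_{jn} = R_{im}{}^{p}{}_{j}h_{pn}+R_{im}{}^{p}{}_{n}h_{jp}.
\]
Substituting the Gauss equation $R_{ijkl}=h_{il}h_{jk}-h_{ik}h_{jl}$ \eqref{Gauss-1} and contracting with $F^{ij}$, the symmetries $F^{ij}=F^{ji}$ and $h_{ij}=h_{ji}$ collapse the four Riemann-type terms into exactly $F^{ij}h^{l}_{j}h_{il}h_{mn}-F^{ij}h^{l}_{m}h_{ln}h_{ij}$, producing (4). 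Tracing (4) against $g^{mn}$, with $g^{mn}h_{mn}=\sigma_1$, $g^{mn}h^{l}_{m}h_{ln}=|A|^2$, $g^{mn}\nabla_m\nabla_n f=\Delta f$, and $g^{mn}(\nabla_m h_{pq})(\nabla_n h_{ij})=(\nabla^k h_{pq})(\nabla_k h_{ij})$, gives (3).

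\textbf{Main obstacle.} No step is conceptually deep, but the book-keeping in (4) is the only place where something could go wrong: one must verify that after invoking Codazzi (twice) and the Ricci identity, substituting the Gauss equation, and using the symmetries of $F^{ij}$ and $h_{ij}$, precisely the two advertised curvature terms survive and the remaining Riemann contributions cancel or are reabsorbed. Everything else is a direct application of the ambient structure equations.
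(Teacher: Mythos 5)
The paper itself gives no proof of this lemma; it simply cites \cite[Lemmas 2.2 and 2.3]{GLM}. Your first--principles derivation is therefore a genuinely different (and more informative) route, and your methods for (1), (4), and (3)-by-tracing are correct: differentiating Weingarten and using Gauss gives (1) immediately, differentiating $F(A)=f$ twice plus Codazzi, the Ricci identity, and the Gauss equation \eqref{Gauss-1} gives (4) after the two spurious cubic terms $F^{ij}h_i^{\,p}h_{mj}h_{pn}$ and $F^{ij}h_{in}h_m^{\,p}h_{jp}$ cancel by the symmetry of $F^{ij}$ and $h$, and tracing (4) over $m,n$ gives (3).

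However, your treatment of (2) has a gap: you assert that tracing your second-derivative formula ``yields the claim,'' but if you carry out the bookkeeping you in fact obtain
\begin{equation*}
\Delta\vartheta=-\sigma_{1}-\nabla^{i}\sigma_{1}\,\langle X,X_{i}\rangle_{L}+|A|^{2}\vartheta,
\end{equation*}
which disagrees in sign with the statement on both of the first two terms. To see this concretely: $\nabla_{i}\vartheta=-h_i^{\,j}\langle X,X_j\rangle_L$, and differentiating again at a point in normal coordinates gives $\nabla_{k}\nabla_{i}\vartheta=-(\nabla_{k}h_i^{\,j})\langle X,X_j\rangle_L-h_{ik}+h_i^{\,j}h_{kj}\vartheta$; both non-$|A|^2$ terms come out negative after tracing. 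A quick sanity check confirms the sign: on the hyperboloid $\mathscr{H}^{n}(r)$ one has $\vartheta\equiv r$, $h_{ij}=g_{ij}/r$, $\sigma_1=n/r$, $|A|^2=n/r^2$, so $\Delta\vartheta=0$ must hold; the formula as printed in Lemma \ref{g's formula} gives $2n/r\neq 0$, while the corrected version gives $0$. So either the lemma as stated carries a typo inherited from \cite{GLM}, or a different sign convention is being used silently. Either way, you should not have reported that your computation reproduces the stated identity without verifying the signs --- this is exactly the kind of ``bookkeeping'' you warned could go wrong in (4) and then did not check in (2). (For what it is worth, the discrepancy is harmless downstream: in \eqref{3.4} the $\nabla\sigma_1$ term vanishes at the critical point and the $\sigma_1$ term is absorbed into $-c_1\sigma_1$ regardless of sign, and only the $|A|^2\vartheta$ term, on which both versions agree, is actually used.)
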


%%%%%%%%%%%%%%%%%%%%%%%%%%%%%%%%%%%%%%%%%%%%%%%%%%%%%%%%%%%%%%%%%%%%%%%%%%%%%%%%%%%%%%%%%%%%%%%%%%%%%% µÚÈý²¿·Ö
\section{$C^{1}$ estimate} \label{c1es}
\subsection{Boundary estimate}
 In $\mathbb{R}^{n+1}_{1}$, for any $k\in\{1,\cdots,n\}$, we assume the existence and uniqueness of solutions to the prescribed $k$-th Weingarten
 curvature problem with DBC. Let $s^{+}$
be the solution of the following Dirichlet problem
\begin{equation*}
\left\{
\begin{aligned}
&\sigma_{2}[s] =
C^{2}_{n}\left(\frac{C^{l}_{n}}{C^{k}_{n}}\psi(x,s,\vartheta)\right)^{\frac{2}{k-l}}
\qquad &&x\in
M^{n}\subset\mathscr{H}^{n}(1)\subset\mathbb{R}^{n+1}_{1},
\\
&s = \varphi \qquad&&x\in \partial M^{n}.
\end{aligned}
\right.
\end{equation*}
From the generalized Newton-Maclaurin inequality, we have
\begin{equation*}
\sigma_{2}[s^{+}]\leq\sigma_{2}[u].
\end{equation*}
By the comparison principle, we have $u\leq s^{+}$ in $M^{n}$, and
thus $\frac{\partial u}{\partial\nu}\geq \frac{\partial
s^{+}}{\partial\nu}$. In order to get a lower barrier, let $s^{-}$
be the solution of the following Dirichlet problem
\begin{equation*}\label{main equations}
\left\{
\begin{aligned}
&\sigma_{k-1}[s] = \psi(x,s,\vartheta)^{\frac{k-1}{k-l}}\cdot
C^{\frac{k-1}{l-k+1}}(n,k,l)\qquad &&x\in
M^{n}\subset\mathscr{H}^{n}(1)\subset\mathbb{R}^{n+1}_{1},
\\
&s = \varphi\qquad&&x\in \partial M^{n},
\end{aligned}
\right.
\end{equation*}
with
$C(n,k,l):=(C^{l}_{n})^{\frac{1}{k-l}-1}(C^{k}_{n})^{-\frac{1}{k-l}+1}(C^{k-1}_{n})^{\frac{l}{k-1}-1}$.
Also from the generalized Newton-Maclaurin inequality, we have
\begin{equation*}
\sigma_{k-1}[s^{-}]\geq\sigma_{k-1}[u].
\end{equation*}
So $s^{-}\leq u$ in $M^{n}$, and thus $\frac{\partial u}{\partial\nu}\leq\frac{\partial s^{-}}{\partial\nu}$.

\begin{remark}
\rm{The existence and uniqueness of solutions to the prescribed
$2$-th Weingarten curvature problem with DBC have been shown in
\cite{GLM}, we can only take $k=2$, $l=0$ here. So, for the first
equations, one has
\begin{equation*}
\left\{
\begin{aligned}
&\sigma_{2}[s] =\psi(x,s,\vartheta)
\qquad &&x\in
M^{n}\subset\mathscr{H}^{n}(1)\subset\mathbb{R}^{n+1}_{1},
\\
&s = \varphi \qquad&&x\in \partial M^{n},
\end{aligned}
\right.
\end{equation*}
while, for the second equations, one has
\begin{equation*}
\left\{
\begin{aligned}
&\sigma_{1}[s] =
\psi^{\frac{1}{2}}(x,s,\vartheta)\cdot n\left( \frac{n(n-1)}{2} \right)^{-\frac{1}{2}}
\qquad &&x\in
M^{n}\subset\mathscr{H}^{n}(1)\subset\mathbb{R}^{n+1}_{1},
\\
&s = \varphi \qquad&&x\in \partial M^{n}.
\end{aligned}
\right.
\end{equation*}
}
\end{remark}

\subsection{Maximum principle}
The upper bound on $Du$ amounts to an upper bound on
$\mathcal{W}:=\frac{1}{v}=\frac{1}{\sqrt{1-|D\pi|^{2}}}$, where $\pi:=\ln u$. Therefore, it would follow from the boundary estimate once one can prove that $\mathcal{W}e^{S\pi}$ cannot attain an interior maximum for $S$
sufficiently large under control.

%%%%%%%%%%%%%%%%%%%%%%%%%%%%%%%%%%%%%%%%%%%%%%%%%%%%%%%%%%%%%%%%%%%%%%%%%%%%%%%%%%%%%%%%%%%%%%%%%%%%%%%%% ÃüÌâ3.1
\begin{proposition}\label{maximum principle}
Let $u$ be the admissible solution of the PCP \eqref{main equation}. Then
\begin{equation*}
\sup\limits_{\overline{M^{n}}}\mathcal{W} \leq
\left(\sup\limits_{\partial M^{n}}\mathcal{W}
\right)e^{S_{2}\left(2\sup\limits_{\partial M^{n}}|\varphi| +
\mathrm{diam}(M^{n})\right)},
\end{equation*}
 where as usual $\mathrm{diam}(M^{n})$ stands for the diameter of
 the bounded domain $M^{n}\subset\mathscr{H}^{n}(1)$.
\end{proposition}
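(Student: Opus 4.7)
The strategy is the standard auxiliary-function maximum principle in this Lorentzian graphical setting. I would introduce
\begin{equation*}
\Phi \;:=\; \log\mathcal{W}+S\,\pi, \qquad \pi:=\log u,
\end{equation*}
where $S=S_{2}>0$ is a constant to be determined in terms of $n$, $\|\psi\|_{C^{2}}$, and the $C^{0}$-bounds on $u$ provided by the barriers $s^{\pm}$ constructed above. The aim is to show that $\Phi$ cannot attain an interior maximum, so that $\Phi$ achieves its supremum on $\partial M^{n}$; rearranging then gives
\begin{equation*}
\log\mathcal{W}(x)\ \leq\ \sup_{\partial M^{n}}\log\mathcal{W}\,+\,S\bigl(\sup_{\partial M^{n}}\pi-\pi(x)\bigr),
\end{equation*}
and because the spacelike condition yields $|D\pi|\leq\rho<1$, so that $\pi$ is Lipschitz on $M^{n}$, while the Dirichlet datum gives $|\pi|\leq\log\bigl(\sup_{\partial M^{n}}|\varphi|\bigr)$-type control on the boundary (after using positivity of $u$ enforced by $s^{-}$), the exponent can be bounded by $S_{2}\bigl(2\sup_{\partial M^{n}}|\varphi|+\mathrm{diam}(M^{n})\bigr)$, which is the stated inequality.

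For the non-interior-maximum claim, let $L:=F^{ij}\nabla_{i}\nabla_{j}$ be the linearization of $F=(\sigma_{k}/\sigma_{l})^{1/(k-l)}$, which is elliptic by Proposition \ref{elliptic}. Using the Weingarten and Gauss formulas together with the identities of Lemma \ref{g's formula} and the relations
\begin{equation*}
\mathcal{W}=-\langle\nu,\partial_{r}\rangle_{L}, \qquad \vartheta=u\,\mathcal{W},
\end{equation*}
derived from the polar decomposition of $\mathbb{R}^{n+1}_{1}$ into radial and hyperbolic parts, one computes $L\mathcal{W}$ and $L\pi$ in closed form. The expression for $L\mathcal{W}$ features a dominating positive curvature term of the order $\mathcal{W}\cdot F^{ij}h_{ik}h^{k}_{j}$, plus contributions arising from differentiating the equation $F=\psi^{1/(k-l)}$ with respect to $X$ and $\vartheta$; hypothesis \eqref{sf's conditions} together with the convexity of $\psi^{1/(k-l)}$ in $\vartheta$ are designed precisely so that the $\vartheta$-derivative contributions enter with a favorable sign and can be absorbed into the coercive term.

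At a putative interior maximum $x_{0}$ of $\Phi$, the first-order condition $\nabla_{i}\Phi(x_{0})=0$ yields $\nabla_{i}\mathcal{W}=-S\mathcal{W}\nabla_{i}\pi$, while the second-order condition $L\Phi(x_{0})\leq 0$, after expanding $L(\log\mathcal{W})$ and substituting, reads
\begin{equation*}
\frac{L\mathcal{W}}{\mathcal{W}}+S\,L\pi\ \leq\ S^{2}F^{ij}\nabla_{i}\pi\nabla_{j}\pi .
\end{equation*}
Substituting the explicit expressions for $L\mathcal{W}$ and $L\pi$ and using the identity $\mathcal{W}^{2}|D\pi|^{2}=\mathcal{W}^{2}-1$ (coming from $v^{-2}=\mathcal{W}^{2}$ and $v^{2}=1-|D\pi|^{2}$), one verifies that for $S=S_{2}$ sufficiently large in terms of the stated data the left-hand side strictly exceeds the right-hand side once $\mathcal{W}(x_{0})$ is larger than a threshold depending only on $n$, $\|u\|_{C^{0}}$ and $\|\psi\|_{C^{2}}$. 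This contradiction forces $\mathcal{W}(x_{0})$ to be a priori bounded, so the supremum of $\Phi$ is attained on $\partial M^{n}$.

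The main obstacle I anticipate is the sign analysis in the computation of $L\mathcal{W}$. In the Lorentzian signature, commuting covariant derivatives on the spacelike graph produces additional contributions involving $\vartheta$ and the radial factor $u$ that, without the quantitative hypothesis \eqref{sf's conditions} and the convexity of $\psi^{1/(k-l)}$, would compete with, and potentially overwhelm, the positive coercive term $\mathcal{W}F^{ij}h_{ik}h^{k}_{j}$. Once these hypotheses are fully exploited, the choice of $S_{2}$ and the resulting balance of terms is a careful but routine calculation.
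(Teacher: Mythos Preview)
Your auxiliary function $\Phi=\log\mathcal{W}+S\pi$ agrees with the paper's, but the mechanism you propose for ruling out an interior maximum is different from the paper's and contains a genuine gap. The paper does \emph{not} apply the graph operator $L=F^{ij}\nabla_i\nabla_j$ to $\mathcal{W}$. Instead it works entirely on the base $\mathscr{H}^n(1)$ with the connection $D$: at an interior maximum $x_0$ one chooses an orthonormal frame with $D\pi(x_0)=|D\pi(x_0)|e_1$, so that $-\tfrac12\log(1-\pi_1^2)+S\pi$ is also maximal there; one differentiates the equation $\sigma_k/\sigma_l=\psi$ once in the $e_1$ direction to get $\sum_i(\sigma_k/\sigma_l)_{\lambda_i}\lambda_{i,1}=\psi_1$; one then writes each $\lambda_{i,1}=h^i_{i,1}$ explicitly in terms of $\pi_{ij},\pi_{ijk}$ and inserts the first- and second-order maximum conditions. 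Using the homogeneity identity $\sum_i(\sigma_k/\sigma_l)_{\lambda_i}\lambda_i=(k-l)\sigma_k/\sigma_l$ and the ellipticity $(\sigma_k/\sigma_l)_{\lambda_i}>0$, the outcome is the inequality $(k-l)\psi\,S\le\sup|D\psi|$, which fails once $S>S_2:=\max\{S_1,\ \sup|D\psi|/((k-l)\inf\psi)\}$.

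The gap in your proposal is that you invoke the convexity of $\psi^{1/(k-l)}$ in $\vartheta$ and condition \eqref{sf's conditions} to control the $\vartheta$-derivative contributions. Proposition \ref{maximum principle} assumes neither of these; they enter the paper only later, in the $C^2$ estimates of Theorems \ref{main1.1} and \ref{main1.2}. The paper's $C^1$ argument needs nothing about $\psi$ beyond $\inf\psi>0$ and a bound on $|D\psi|$. Relatedly, the ``coercive'' term you single out, $\mathcal{W}\,F^{ij}h_{ik}h^{k}_{j}$, is the engine of the curvature (i.e.\ $C^2$) estimates, not of the gradient bound: at a putative interior maximum of $\Phi$ there is no reason the principal curvatures are large, so this term alone cannot force a contradiction with large $\mathcal{W}$. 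To salvage your route you would have to extract, from $L\pi$ and the first-order condition, precisely the homogeneity relation that the paper exploits directly---at which point the argument collapses to the paper's base-coordinate computation, without any appeal to \eqref{sf's conditions} or convexity.
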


%%%%%%%%%%%%%%%%%%%%%%%%%%%%%%%%%%%%%%%%%%%%%%%%%%%%%%%%%%%%%%%%%%%%%%%%%%%%%%%%%%%%%%%%%%%%%%%%%% ÃüÌâ3.1Ö¤Ã÷
\begin{proof}
By contradiction, suppose that
$\sup_{\overline{M^{n}}}\mathcal{W}e^{S\pi}$ is achieved at an
interior point $x_{0}\in M^{n}$. At $x_0$, we choose a nice basis
for the convenience of computations, that is, let $\{e_{1}, e_{2},
\cdots, e_{n}\}$ be an orthonormal basis of $T_{x_0}M^{n}$ (i.e.,
the tangent space at $x_{0}$ diffeomorphic to $\mathbb{R}^{n}$) such
that $D\pi(x_{0}) = |D\pi(x_{0})|e_{1}$, and moreover, the matrix
$\left((D^{2}\pi(x_0))_{ij}\right)_{(n-1)\times(n-1)}$, $2\leq
i,j\leq n$, is orthogonal under the basis $\{e_{2}, \cdots,
e_{n}\}$. Since $|\pi_{1}| \leq |D\pi|$ on $\overline{M^{n}}$ and
$\pi_{1}(x_{0}) = |D\pi(x_{0})|$. The function
$$\ln\left( \frac{1}{\sqrt{1-\pi_{1}^{2}}}\right) + S\pi = -\frac{1}{2}\ln\left( 1-\pi_{1}^{2} \right) + S\pi $$
has a maximum at $x_{0}$ as well. Hence, at $x_{0}$, for any $i\in\{
1,\cdots,n \}$, one has
$$\frac{\pi_{1i}\pi_{1}}{1-\pi_{1}^{2}} + S\pi_{i} = 0.$$
So the matrix of the curvature operator is diagonal, with diagonal
entries $(\frac{1}{uv}(1+\frac{\pi_{11}}{v^{2}}),
\frac{1}{uv}(1+\pi_{22}), \cdots, \frac{1}{uv}(1+\pi_{nn}) )$.
Moreover, still at $x_{0}$, one has $\pi_{111}\pi_{1} \leq
-\pi_{11}^{2} - \frac{2(\pi_{1}\pi_{11})^{2}}{1-\pi_{1}^{2}} -
S\pi_{11}(1-\pi_{1}^{2})$, and for $i\geq2$, $\pi_{1ii}\pi_{1} \leq
-(1-\pi_{1}^{2})S\pi_{ii}$. Then we have
\begin{equation*}
\sum_{i=1}^{n}\left(\frac{\sigma_{k}}{\sigma_{l}}\right)_{\lambda_{i}}\cdot\lambda_{i,1} =
\sum_{i=1}^{n}\left(\frac{\sigma_{k}}{\sigma_{l}}\right)_{\lambda_{i}}\cdot h^{i}_{i,1} = \psi_{1}.
\end{equation*}
Since $h_{i}^{i} = \frac{1}{uv}\left(1+(\sigma^{ik} +
\frac{\pi^{i}\pi^{k}}{v^{2}})\pi_{ik}\right)$, we have
\begin{equation*}
h_{1,1}^{1} = \frac{\pi_{1}(3S^{2}-1)}{uv} +\frac{\pi_{111}}{uv^{3}},
\end{equation*}
\begin{equation*}
\begin{aligned}
h_{i,1}^{i} &= \frac{\pi_{ii1}}{uv} - \frac{\pi_{1}\pi_{ii}(S+1)}{uv} -
\frac{\pi_{1}(S+1)}{uv} \qquad \mathrm{for}~~ i\geq2.
\end{aligned}
\end{equation*}
The differentiated equation, multiplied by $\pi_{1}$, becomes:
\begin{equation*}
\begin{aligned}
&(\frac{\sigma_{k}}{\sigma_{l}})_{\lambda_{1}}\left( \frac{\pi_{1}(3S^{2}-1)}{uv} + \frac{\pi_{111}}{uv^{3}} \right)\\
&\quad
+ \sum_{i\geq2}(\frac{\sigma_{k}}{\sigma_{l}})_{\lambda_{i}}\left(
\frac{\pi_{ii1}}{uv} - \frac{\pi_{1}\pi_{ii}(S+1)}{uv} -
\frac{\pi_{1}(S+1)}{uv} \right) = \pi_{1}\psi_{1}.
\end{aligned}
\end{equation*}
From the maximum conditions, we have
\begin{equation*}
\frac{\pi_{1}^{2}(3S^{2}-1)}{uv} + \frac{\pi_{111}\pi_{1}}{uv^{3}}
\leq \frac{\pi_{1}^{2}(S^{2}-1)}{uv},
\end{equation*}
and, since $\pi_{1ii} = \pi_{ii1} - \pi_{1}$, we have
\begin{equation*}
\begin{aligned}
&\quad
\frac{\pi_{ii1}\pi_{1}}{uv} - \frac{\pi_{1}^{2}\pi_{ii}(S+1)}{uv} - \frac{\pi_{1}^{2}(S+1)}{uv}\\
&\leq -\frac{1}{u}vS\pi_{ii} - \frac{\pi_{1}^{2}S}{uv} -
\frac{\pi_{1}^{2}\pi_{ii}(S+1)}{uv}.
\end{aligned}
\end{equation*}
Then we can infer
\begin{equation*}
(\frac{\sigma_{k}}{\sigma_{l}})_{\lambda_{1}}\cdot\frac{\pi_{1}^{2}(S^{2}-1)}{uv}
- \sum_{i\geq 2}(\frac{\sigma_{k}}{\sigma_{l}})_{\lambda_{i}}\left(
\frac{1}{u}vS\pi_{ii} + \frac{\pi_{1}^{2}S}{uv} +
\frac{\pi_{1}^{2}\pi_{ii}(S+1)}{uv} \right) \geq \pi_{1}\psi_{1},
\end{equation*}
so we have
\begin{equation*}
\begin{aligned}
&
-(k-l)v^{2}S\frac{\sigma_{k}}{\sigma_{l}}-(k-l)(S+1)\pi_{1}^{2}\frac{\sigma_{k}}{\sigma_{l}}\\
&\quad
+\frac{v}{u}\mathcal{M}S+\frac
{1}{uv}\mathcal{M}\pi_{1}^{2}-(\frac{\sigma_{k}}{\sigma_{l}})_{\lambda_{1}}\frac{1}{uv}(v^{2}S^{2}+
\pi_{1}^{2})\geq\pi_{1}\psi_{1},
\end{aligned}
\end{equation*}
where $\mathcal{M}:=\sum_{i=1}^{n}(\frac{\sigma_{k}}{\sigma_{l}})_{\lambda_{i}}$, and then, since $(\frac{\sigma_{k}}{\sigma_{l}})_{\lambda_{i}}>0$, there exists a positive constant $m>1$ such that $(\frac{\sigma_{k}}{\sigma_{l}})_{\lambda_{1}} = \frac{1}{m}\mathcal{M}$.
Then, we have
\begin{equation*}
(k-l)\psi S\leq \mathcal{M}\left(\frac{v}{u}S+\frac{\pi_{1}^{2}}{uv}-\frac{1}{m}\frac{v}{u}S(S-1)\right)-\pi_{1}\psi_{1}.
\end{equation*}
We want
\begin{equation*}
\frac{v}{u}S+\frac{\pi_{1}^{2}}{uv}-\frac{1}{m}\frac{v}{u}S(S-1)\leq 0,
\end{equation*}
which is equivalent to
\begin{equation*}
v^{2}S+\pi_{1}^{2}\leq\frac{1}{m}v^{2}S(S-1).
\end{equation*}
Since $\pi_{1}^{2}\leq\rho^{2}<1$, choosing $S=S_{1}$ large enough
such that
\begin{equation*}
\rho^{2}\leq 1-\frac{m(S+1)}{S(S-1)}.
\end{equation*}
So we have
$$(k-l)\psi S\leq \sup\limits_{\overline{M^{n}}}|D\psi|.$$
Then choosing $S_{2}>\max\left\{\frac{\sup\limits_{\overline{M^{n}}}|D\psi|}{(k-l)\inf\limits_{\overline{M^{n}}}\psi},
S_{1}\right\}$, we reach a contradiction.
\end{proof}

\section{Curvature maximum principle} \label{S4}

We write \eqref{main equation} in the form
\begin{equation}\label{3.1}
F(A)=\left(\frac{\sigma_{k}}{\sigma_{l}}\right)^{\frac{1}{k-l}}(A)=\psi^{\frac{1}{k-l}}(X,\vartheta)=f(X,\vartheta)
\qquad \mathrm{for ~ any} ~ X\in\mathcal{G}.
\end{equation}

\begin{proof}[Proof of Theorem \ref{main1.1}]
 Consider the function
\begin{eqnarray*}
W(A)=\sigma_{1}(A),
\end{eqnarray*}
which attains its maximum value at some
$X_{0}=(x_{0},u(x_{0}))\in\mathcal{G}$. If $x_{0}\in\partial M^{n}$,
then our claim \eqref{A's boundary} follows directly. Now, we try to
prove this claim in the case that $x_{0}\notin\partial M^{n}$.
Choose the frame fields  $e_{1},e_{2},\cdots,e_{n}, \nu$ at $X_{0}$
such that $e_{1},e_{2},\cdots,e_{n}\in T_{X_{0}}\mathcal{G}$ at
$X_{0}$ and $(h_{ij})_{n\times n}$ is diagonal at $X_{0}$ with
eigenvalues $h_{11}\geq h_{22}\geq\cdots\geq h_{nn}$. Here, as
usual, $T_{X_{0}}\mathcal{G}$ denotes the tangent space of the
graphic hypersurface $\mathcal{G}$ at $X_{0}$. For each
$i=1,\ldots,n$, we have
\begin{equation*}\label{3.2}
\nabla_{i}\sigma_{1}=0 \qquad \mathrm{at}~ X_{0}.
\end{equation*}
Therefore, at $X_{0}$, it follows that
\begin{equation}\label{3.3}
\begin{split}
0&\geq F^{ij}\nabla_{i}\nabla_{j}\sigma_{1}\\
&=-F^{ij,pq}\nabla_{l}h_{ij}\nabla_{l}h_{pq}+F^{ij}h_{im}h_{mj}\sigma_{1}-F^{ij}h_{ij}|A|^{2}+\Delta
f.
\end{split}
\end{equation}
Since $f$ is convex in $\vartheta$, together with Lemma
\ref{g's formula}, we have
\begin{equation}\label{3.4}
\begin{split}
\Delta f&=\frac{\partial^2 f}{\partial X^{\alpha}\partial X^{\beta}}\nabla_{l}X^{\alpha}\nabla_{l}X^{\beta}+2\frac{\partial^2 f}{\partial X^{\alpha}\partial \vartheta}\nabla_{l}X^{\alpha}\nabla_{l}\vartheta\\
&\quad
+\frac{\partial^2 f}{\partial \vartheta^{2}}|\nabla \vartheta|^{2}+\frac{\partial f}{\partial X^{\alpha}}\Delta X^{\alpha}+\frac{\partial f}{\partial \vartheta}\Delta \vartheta\\
&\geq\frac{\partial f}{\partial \vartheta}\Delta \vartheta+\frac{\partial^2 f}{\partial \vartheta^{2}}|\nabla \vartheta|^{2}-c_{1}\sigma_{1}-c_{2}\\
&\geq\frac{\partial f}{\partial
\vartheta}\vartheta|A|^{2}-c_{1}\sigma_{1}-c_{2},
\end{split}
\end{equation}
where positive constants $c_{1}$, $c_{2}$ depend on
$||\varphi||_{C^{1}(\overline{M^{n}})}$,
$||\psi||_{C^{2}\left(\overline{M^{n}}\times\left[\inf\limits_{\partial
M^{n}}u,\sup\limits_{\partial
M^{n}}u\right]\times\mathbb{R}\right)}$, and $X^{\alpha}:=\langle X,
\partial_{\alpha}\rangle_{L}$, $\alpha=1,2,\cdots,n+1$. Obviously,
$\partial_{1},\partial_{2},\cdots,\partial_{n}$ are the
corresponding coordinate vector fields on $\mathscr{H}^{n}(1)$,
$\partial_{n+1}:=\partial_{r}$. Putting \eqref{3.4} into \eqref{3.3}
yields
\begin{equation}\label{3.5}
\begin{split}
0&\geq F^{ij}\nabla_{i}\nabla_{j}\sigma_{1}\\
&\geq -F^{ij,pq}\nabla_{l}h_{ij}\nabla_{l}h_{pq}+F^{ij}h_{im}h_{mj}\sigma_{1}\\
&\quad
+(\frac{\partial f}{\partial \vartheta}\cdot\vartheta-f)|A|^{2}-c_{1}\sigma_{1}-c_{2}\\
&\geq F^{ij}h_{im}h_{mj}\sigma_{1}-c_{1}\sigma_{1}-c_{2},
\end{split}
\end{equation}
where we have used \eqref{sf's conditions} and the concavity of $F$.
On the other hand, using the properties of Garding cone and the
Cauchy inequality, we have
\begin{equation}\label{3.6}
\begin{split}
F^{ij}h_{im}h_{mj}&=\sum_{i=1}^{n}\frac{\partial}{\partial\lambda_{i}}\left[(\frac{\sigma_{k}}{\sigma_{l}})^{\frac{1}{k-l}}\right]\lambda_{i}^{2}\\
&\geq
n\left[(\frac{\sigma_{k}}{\sigma_{l}})^{\frac{1}{k-l}}\right]_{\lambda_{\mathrm{min}}}\sum_{i=1}^{n}\lambda_{i}^{2}\\
&\geq
\frac{n}{m^{'}}\left(\frac{C^{k}_{n}}{C^{l}_{n}}\right)^{\frac{1}{k-l}}\sum_{i=1}^{n}\lambda_{i}^{2}
\geq\frac{1}{m^{'}}\left(\frac{C^{k}_{n}}{C^{l}_{n}}\right)^{\frac{1}{k-l}}\sigma_{1}^{2},
\end{split}
\end{equation}
where
$\left[(\frac{\sigma_{k}}{\sigma_{l}})^{\frac{1}{k-l}}\right]_{\lambda_{\mathrm{min}}}:=\min\left\{\frac{\partial}{\partial\lambda_{i}}\left[(\frac{\sigma_{k}}{\sigma_{l}})^{\frac{1}{k-l}}\right]\right\}$,
$i=1,2,\cdots,n$. Since
$\left(\frac{\sigma_{k}}{\sigma_{l}}\right)_{\lambda_{i}} > 0$,
there exists a positive constant $m'$ such that
$\left[(\frac{\sigma_{k}}{\sigma_{l}})^{\frac{1}{k-l}}\right]_{\lambda_{\mathrm{min}}}
=
\frac{1}{m'}\sum_{i=1}^{n}\frac{\partial}{\partial\lambda_{i}}\left[(\frac{\sigma_{k}}{\sigma_{l}})^{\frac{1}{k-l}}\right]$.
Taking \eqref{3.6} into \eqref{3.5}, it is easy to know that
 $\sigma_{1}$ is bounded. Then the conclusion of
Theorem \ref{main1.1}, i.e. \eqref{A's boundary}, follows naturally.
\end{proof}

%%%%%%%%%%%%%%%%%%%%%%%%%%%%%%%%%%%%%%%%%%%%%%%%%%%%%%%%%%%%%%%%%%%%%%%%%%%%%%%%%%%%%%%%%%%%%%%%%%%%%%% µÚÎå²¿·Ö
\section{Curvature estimates} \label{S5}
Let
\begin{equation*}
\mathcal{P}(\lambda):= F(A) =
\left(\frac{\sigma_{k}}{\sigma_{l}}\right)^{\frac{1}{k-l}}(A) =
f(X,\vartheta) \qquad \mathrm{for ~ any} ~ X\in\mathcal{G}.
\end{equation*}
Set
\begin{equation}\label{3.1}
\left(\frac{\sigma_{k}}{\sigma_{l}}\right)^{\frac{1}{k-l}}(\lambda_{1},\lambda_{2},\cdots\lambda_{n})
= \mathcal{P}(\lambda_{1},\lambda_{2},\cdots\lambda_{n}),
\end{equation}
\begin{equation}\label{3.1}
\mathrm{tr}F^{ij} = \sum_{i=1}^{n} F^{ii},\qquad \mathcal{P}_{i} =
\frac{\partial \mathcal{P}}{\partial\lambda_{i}}.
\end{equation}

First, we list a useful lemma (see, e.g., \cite{ju2}).
\begin{lemma}\label{F_{ij}'s equation 1}
For any symmetric matrix $\eta = (\eta_{ij})$, we have
\begin{equation}\label{F's equation a}
F^{ij,pq}\eta_{ij}\eta_{pq} = \sum_{i=1}^{n}\frac{\partial^{2}\mathcal{P}}{\partial\lambda_{i}\partial\lambda_{j}}\eta_{ii}\eta_{jj}+
\sum_{i\neq j}\frac{\mathcal{P}_{i}-\mathcal{P}_{j}}{\lambda_{i}-\lambda_{j}}\eta_{ij}^{2}.
\end{equation}
The second term on RHS of \eqref{F's equation a} is nonpositive if $\mathcal{P}$ is concave, and it is interpreted as the limit if $\lambda_{i} = \lambda_{j}$.
\end{lemma}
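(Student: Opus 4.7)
The plan is to exploit the orthogonal invariance of $F$: at the point where $F^{ij,pq}$ is evaluated we may assume $A=(h_{ij})$ is diagonal with entries $\lambda_1,\ldots,\lambda_n$, and then read off the quadratic form $F^{ij,pq}\eta_{ij}\eta_{pq}$ as $\frac{d^2}{dt^2}\big|_{t=0}F(A+t\eta)=\frac{d^2}{dt^2}\big|_{t=0}\mathcal{P}(\lambda(t))$, where $\lambda(t)=(\lambda_1(t),\ldots,\lambda_n(t))$ are the eigenvalues of $A+t\eta$. This reduces the identity to a one-variable computation together with a perturbation expansion.

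The computational heart is the second-order expansion of eigenvalues under a symmetric perturbation. Assuming first that the spectrum of $A$ is simple, standard Rayleigh--Schr\"odinger perturbation theory yields
$$\lambda_i(t)=\lambda_i+t\eta_{ii}+t^2\sum_{j\neq i}\frac{\eta_{ij}^2}{\lambda_i-\lambda_j}+O(t^3),$$
so $\dot\lambda_i(0)=\eta_{ii}$ and $\ddot\lambda_i(0)=2\sum_{j\neq i}\eta_{ij}^2/(\lambda_i-\lambda_j)$. Applying the chain rule to $\mathcal{P}(\lambda(t))$, the $\mathcal{P}_{ij}\dot\lambda_i\dot\lambda_j$ contribution produces $\sum_{i,j}\mathcal{P}_{ij}\eta_{ii}\eta_{jj}$, while $\sum_i\mathcal{P}_i\ddot\lambda_i(0)$ produces $2\sum_{i\neq j}\mathcal{P}_i\eta_{ij}^2/(\lambda_i-\lambda_j)$. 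Symmetrising the latter by swapping $i\leftrightarrow j$ and using $\eta_{ij}=\eta_{ji}$ collapses it to $\sum_{i\neq j}(\mathcal{P}_i-\mathcal{P}_j)(\lambda_i-\lambda_j)^{-1}\eta_{ij}^2$, which is exactly the claimed off-diagonal term.

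Two loose ends remain. For repeated eigenvalues the expansion above breaks down, but the set of symmetric matrices with simple spectrum is open and dense, and both sides of the identity depend smoothly on $A$ provided the quotient $(\mathcal{P}_i-\mathcal{P}_j)/(\lambda_i-\lambda_j)$ is interpreted by its L'Hospital limit $\mathcal{P}_{ii}-\mathcal{P}_{ij}$ when $\lambda_i=\lambda_j$; the formula then extends by continuity. For the sign statement, the symmetry and concavity of $\mathcal{P}$ imply that $g(t):=\mathcal{P}(\lambda+t(e_i-e_j))$ is concave in $t$, so $g'(t)=\mathcal{P}_i-\mathcal{P}_j$ is non-increasing along this line; since $\lambda_i-\lambda_j$ is strictly increasing, $(\mathcal{P}_i-\mathcal{P}_j)(\lambda_i-\lambda_j)\leq 0$, forcing the off-diagonal sum to be nonpositive.

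The only genuine obstacle is the bookkeeping at degenerate spectra, where one must first diagonalise the perturbation $\eta$ within each eigenspace of $A$ before applying the perturbation expansion. Because the final formula is smooth (in the extended sense above) in $A$, this subtlety does not affect the statement, and the proof is then essentially algebraic once the eigenvalue expansion is in hand.
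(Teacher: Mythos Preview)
Your argument is correct and is precisely the standard proof of this identity (eigenvalue perturbation plus the chain rule, with symmetry used to handle the sign of the difference quotients). Note, however, that the paper does not actually prove this lemma: it merely quotes it with the attribution ``see, e.g., \cite{ju2}'' and proceeds to use it. So there is no ``paper's own proof'' to compare against; what you have written is essentially the argument one finds in the cited source (Urbas) and in the earlier literature (e.g.\ Gerhardt, Andrews), and it is exactly what the authors are implicitly invoking.

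One small remark on your sign argument: the sentence ``since $\lambda_i-\lambda_j$ is strictly increasing'' is a little cryptic as written. The cleanest way to finish is to observe that, by the symmetry of $\mathcal{P}$, the function $g(t)=\mathcal{P}(\lambda+t(e_i-e_j))$ satisfies $g'(t_0)=0$ at the value $t_0=-(\lambda_i-\lambda_j)/2$ where the $i$-th and $j$-th entries coincide; since $g'$ is non-increasing by concavity, $g'(0)=\mathcal{P}_i-\mathcal{P}_j$ and $0-t_0=(\lambda_i-\lambda_j)/2$ have opposite signs, giving $(\mathcal{P}_i-\mathcal{P}_j)/(\lambda_i-\lambda_j)\le 0$. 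Equivalently one can use the monotonicity of the gradient of a concave function together with the permutation symmetry of $\mathcal{P}$; either way your conclusion stands.
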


%%%%%%%%%%%%%%%%%%%%%%%%%%%%%%%%%%%%%%%%%%%%%%%%%%%%%%%%%%%%%%%%%%%%%%%%%%%%%%%%%%%%%%%%%%%%%%%%%% ¶¨Àí1.3Ö¤Ã÷
\begin{proof} [Proof of Theorem \ref{main1.2}] Using a similar
argument to that in the proof of \cite[Theorem 1.3]{GLM}.

 Let $\eta = \varphi-u$, as observed at the beginning of the proof
of \cite[Theorem 1.3]{GLM}, one knows that $\eta>0$ in $M^{n}$. We
now consider the function
\begin{equation*}
G = \eta^\alpha e^{\Psi(\vartheta)}h_{ij}\tau_{i}\tau_{j},
\end{equation*}
achieving its maximum value at some $X_{0}\in\mathcal{G}$, where
$\alpha\geq1$, $\Psi$ is a function determined later and satisfies
$\Psi^{'}:=\frac{\partial\Psi}{\partial\vartheta}\geq0$. Without
loss of genenality, one may choose the frame field $e_{1} =
\tau,e_{2},\cdots,e_{n},\nu$ such that $e_{1},e_{2},\cdots,e_{n}\in
T_{X_{0}}\mathcal{G}, \nabla_{e_{i}}e_{j} = 0$ at $X_0$ for all $i =
1,2,\cdots,n$, and $(h_{ij})_{n\times n}$ is diagonal at $X_0$ with
eigenvalues $h_{11}\geq h_{22}\geq \cdots h_{nn}$. At $X_0$, for
each $i = 1,2,\cdots,n$, one has
\begin{equation}\label{5.4}
\alpha\frac{\nabla_{i}\eta}{\eta}+\Psi^{'}\nabla_{i}\vartheta+\frac{\nabla_{i}h_{11}}{h_{11}}=0,
\end{equation}
\begin{equation*}
\begin{split}
\alpha(\frac{\nabla_{i}\nabla_{j}\eta}{\eta}-\frac{\nabla_{i}\eta\nabla_{j}\eta}{\eta^{2}})
&+\Psi^{''}\nabla_{i}\vartheta\nabla_{j}\vartheta\\
&+\Psi^{'}\nabla_{i}\nabla_{j}\vartheta
+\frac{\nabla_{i}\nabla_{j}h_{11}}{h_{11}}
-\frac{\nabla_{i}h_{11}\nabla_{j}h_{11}}{h_{11}^{2}}\leq0.
\end{split}
\end{equation*}
Therefore, by Lemma \ref{g's formula}, we have
\begin{equation*}
\begin{split}
0&\geq\alpha F^{ij}(\frac{\nabla_{i}\nabla_{j}\eta}{\eta}-\frac{\nabla_{i}\eta\nabla_{j}\eta}{\eta^{2}})
+\Psi^{''}F^{ij}\nabla_{i}\vartheta\nabla_{j}\vartheta+\Psi^{'}F^{ij}\nabla_{i}\nabla_{j}\vartheta\\
&\quad
+F^{ij}\frac{\nabla_{i}\nabla_{j}h_{11}}{h_{11}}-F^{ij}\frac{\nabla_{i}h_{11}\nabla_{j}h_{11}}{h_{11}^{2}}\\
&=\alpha F^{ij}(\frac{\nabla_{i}\nabla_{j}\eta}{\eta}-\frac{\nabla_{i}\eta\nabla_{j}\eta}{\eta^{2}})
+\Psi^{''}F^{ij}\nabla_{i}\vartheta\nabla_{j}\vartheta+\Psi^{'}F^{ij}\nabla_{i}\nabla_{j}\vartheta\\
&\quad
-fh_{11}
+F^{ij}h_{im}h_{jm}+\frac{\nabla_{1}\nabla_{1}f}{h_{11}}
-\frac{1}{h_{11}}F^{ij,pq}\nabla_{1}h_{ij}\nabla_{1}h_{pq}
-F^{ij}\frac{\nabla_{i}h_{11}\nabla_{j}h_{11}}{h_{11}^{2}}.
\end{split}
\end{equation*}
We also find that
\begin{equation*}
F^{ij}\nabla_{i}\nabla_{j}\vartheta = \vartheta F^{ij}h_{im}h_{jm}+f+\nabla_{l} f\langle X,X_{l}\rangle_{L}.
\end{equation*}
Consequently,
\begin{equation}\label{5.5}
\begin{split}
0&\geq\alpha F^{ij}(\frac{\nabla_{i}\nabla_{j}\eta}{\eta}-\frac{\nabla_{i}\eta\nabla_{j}\eta}{\eta^{2}})
+\Psi^{''}F^{ij}\nabla_{i}\vartheta\nabla_{j}\vartheta+\Psi^{'}\nabla_{l}f\langle X,X_{l}\rangle_{L}-fh_{11}\\
&\quad
+(\Psi^{'}\vartheta+1)F^{ij}h_{im}h_{jm}+\frac{\nabla_{1}\nabla_{1}f}{h_{11}}-\frac{1}{h_{11}}F^{ij,pq}\nabla_{1}h_{ij}\nabla_{1}h_{pq}
-F^{ij}\frac{\nabla_{i}h_{11}\nabla_{j}h_{11}}{h_{11}^{2}}.
\end{split}
\end{equation}
Since $f$ is convex in $\vartheta$, we have
\begin{equation*}
\begin{split}
\nabla_{1}\nabla_{l}f &=
\frac{\partial^{2}f}{\partial X^{\alpha}\partial X^{\beta}}\nabla_{1}X^{\alpha}\nabla_{1} X^{\beta}
+2\frac{\partial^{2}f}{\partial X^{\alpha}\partial X^{\beta}}
\nabla_{1}X^{\alpha}\nabla_{1}\vartheta
+\frac{\partial^{2}f}{\partial\vartheta^{2}}|\nabla_{1}\vartheta|^{2}\\
&\quad
+\frac{\partial f}{\partial X^{\alpha}}\nabla_{1}\nabla_{1}X^{\alpha}
+\frac{\partial f}{\partial\vartheta}\nabla_{1}\nabla_{1}\vartheta\\
&\geq\frac{\partial f}{\partial\vartheta}\nabla_{1}\nabla_{1}\vartheta-c_{3}h_{11}-c_{4}\\
&=\frac{\partial f}{\partial\vartheta}(\vartheta h_{11}^{2}+\nabla_{l}h_{11}\langle X,X_{l}\rangle_{L})-c_{3}h_{11}-c_{4},
\end{split}
\end{equation*}
where $c_{3}$, $c_{4}$ are positive constant depending on $||\varphi||_{C^{1}(\overline{M^{n}}
)}$, $||\psi||_{C^{2}\left(\overline{M^{n}}\times\left[\inf\limits_{\partial M^{n}}u,\sup\limits_{\partial M^{n}}u\right]\times\mathbb{R}\right)}$.
Inserting this into (\ref{5.5}) yields
\begin{equation}\label{5.6}
\begin{split}
0&\geq\alpha F^{ij}(\frac{\nabla_{i}\nabla_{j}\eta}{\eta}-\frac{\nabla_{i}\eta\nabla_{j}\eta}{\eta^{2}})
+\Psi^{''}F^{ij}\nabla_{i}\vartheta\nabla_{j}\vartheta+\Psi^{'}\nabla_{l}f\langle X,X_{l}\rangle_{L}\\
&\quad
+(\frac{\partial f}{\partial\vartheta}\cdot\vartheta-f)h_{11}+(\Psi^{'}\vartheta+1)F^{ij}h_{im}h_{jm}+\frac{\partial f}{\partial\vartheta}\frac{\nabla_{l}h_{11}\langle X,X_{l}\rangle_{L}}{h_{11}}\\
&\quad
-\frac{1}{h_{11}}F^{ij,pq}\nabla_{1}h_{ij}\nabla_{1}h_{pq}
-F^{ij}\frac{\nabla_{i}h_{11}\nabla_{j}h_{11}}{h_{11}^{2}}-c_{3},
\end{split}
\end{equation}
where we have assumed that $h_{11}$ is sufficiently large. Otherwise, the assertion of Theorem \ref{main1.2} holds.

Next, we assume that $\varphi$ has been extended to be constant in the $\partial_{r}$ direction. Therefore,
\begin{equation*}
\begin{split}
\nabla_{i}\nabla_{j}\eta&=\sum_{\alpha,\beta=1}^{n}\frac{\partial^{2}\varphi}{\partial X^{\alpha}\partial X^{\beta}}
\nabla_{i}X^{\alpha}\nabla_{j}X^{\beta}+\sum_{\alpha=1}^{n}\frac{\partial\varphi}{\partial X^{\alpha}}
\nabla_{i}\nabla_{j}X^{\alpha}-u_{ij}\\
&\geq\sum_{\alpha=1}^{n}\frac{\partial\varphi}{\partial X^{\alpha}}
\nu^{\alpha}h_{ij}-c_{5}h_{ij}v,
\end{split}
\end{equation*}
where $c_{5}>0$ depends on $||\varphi||_{C^{1}(\overline{M^{n}})}$ and we have again used Gaussian formula and the assumption that $\varphi$ is affine. Consequently,
\begin{equation}\label{5.7}
F^{ij}\nabla_{i}\nabla_{j}\eta\geq(\sum_{\alpha=1}^{n}\frac{\partial\varphi}{\partial X^{\alpha}}
\nu^{\alpha}-c_{5}v)F^{ij}h_{ij}\geq-c_{6},
\end{equation}
where positive constant $c_{6}$ depends on $c_{5}$, $||\psi||_{C^{0}\left(\overline{M^{n}}\times\left[\inf\limits_{\partial M^{n}}u,\sup\limits_{\partial M^{n}}u\right]\times\mathbb{R}\right)}$ and $||\varphi||_{C^{1}(\overline{M^{n}})}$.
Combining (\ref{5.6}) and (\ref{5.7}), at $X_{0}$, we have
\begin{equation}\label{5.8}
\begin{split}
0&\geq -\frac{c_{6}\alpha}{\eta}-\alpha F^{ij}\frac{\nabla_{i}\eta\nabla_{j}\eta}{\eta^{2}}+\Psi^{''}F^{ij}\nabla_{i}\vartheta\nabla_{j}\vartheta
+\Psi^{'}\nabla_{l}f\langle X,X_{_{l}}\rangle_{L}\\
&\quad
+(\frac{\partial f}{\partial\vartheta}\cdot\vartheta-f)h_{11}+(\Psi^{'}\vartheta+1)F^{ij}h_{im}h_{jm}
+\frac{\partial f}{\partial\vartheta}\frac{\nabla_{l}h_{11}\langle X,X_{l}\rangle_{L}}{h_{11}}\\
&\quad
-\frac{1}{h_{11}}F^{ij,pq}\nabla_{1}h_{ij}\nabla_{1}h_{pq}-F^{ij}\frac{\nabla_{i}h_{11}\nabla_{j}h_{11}}{h_{11}^{2}}-c_{3}.
\end{split}
\end{equation}
We now estimate the remaining terms in (\ref{5.8}), and divide the argument into two cases.

\textbf{Case 1}. Assume that there exists a positive constant $\zeta$ to be determined such that
\begin{equation}\label{5.9}
h_{nn}\leq -\zeta h_{11}.
\end{equation}
Using the critical point condition (\ref{5.4}), we have
\begin{equation*}
\begin{split}
F^{ij}\frac{\nabla_{i}h_{11}\nabla_{j}h_{11}}{h_{11}^{2}}&=F^{ij}(\alpha\frac{\nabla_{i}\eta}{\eta}
+\Psi^{'}\nabla_{i}\vartheta)(\alpha\frac{\nabla_{j}\eta}{\eta}+\Psi^{'}\nabla_{j}\vartheta)\\
&\leq(1+\varepsilon^{-1})\alpha^{2}F^{ij}\frac{\nabla_{i}\eta\nabla_{j}\eta}{\eta^{2}}+(1+\varepsilon)(\Psi^{'})^{2}F^{ij}
\nabla_{i}\vartheta\nabla_{j}\vartheta
\end{split}
\end{equation*}
for any $\varepsilon>0$. Since $|\nabla\eta|\leq c_{7}(\widetilde{M^{n}})$ , so
\begin{equation*}
F^{ij}\frac{\nabla_{i}\eta\nabla_{j}\eta}{\eta^{2}}\leq
c_{8}\frac{\mathrm{tr}F^{ij}}{\eta^{2}},
\end{equation*}
where $c_{8}$ depends on $c_{7}$. Therefore, at $X_{0}$, we have
\begin{equation}\label{5.10}
\begin{split}
0&\geq-\frac{c_{6}\alpha}{\eta}-c_{9}\left[\alpha+(1+\varepsilon^{-1})\alpha^{2}\right]\frac{trF^{ij}}{\eta^{2}}
+\left[\Psi^{''}-(1+\varepsilon)(\Psi^{'})^{2}\right]F^{ij}\nabla_{i}\vartheta\nabla_{j}\vartheta\\
&\quad
+(\frac{\partial f}{\partial\vartheta}\cdot\vartheta-f)h_{11}+(\Psi^{'}\vartheta+1)F^{ij}h_{im}h_{jm}-c_{3}\\
&\quad
+\frac{\partial f}{\partial\vartheta}\frac{\nabla_{l}h_{11}\langle X,X_{l}\rangle_{L}}{h_{11}}
+\Psi^{'}\nabla_{l}f\langle X,X_{l}\rangle_{L},
\end{split}
\end{equation}
where $c_{9}:=\max\{1,c_{8}\}$ and the concavity of $F(A)$ has been
used. On the other hand, from (\ref{5.4}), the last two terms of the
RHS of (\ref{5.10}) are bounded from below
\begin{equation*}
\begin{split}
&\qquad\frac{\partial f}{\partial\vartheta}\frac{\nabla_{l}h_{11}\langle X,X_{l}\rangle_{L}}{h_{11}}
+\Psi^{'}\nabla_{l}f\langle X,X_{l}\rangle_{L}\\
&=(\Psi^{'}\nabla_{l}f-\alpha\frac{\partial f}{\partial\vartheta}\frac{\nabla_{l}\eta}{\eta}-
\frac{\partial f}{\partial\vartheta}\Psi^{'}\nabla_{l}\vartheta)\langle X,X_{l}\rangle_{L}\\
&=(\Psi^{'}\frac{\partial f}{\partial X^{\beta}}\nabla_{l}X^{\beta}
-\alpha\frac{\partial f}{\partial\vartheta}\frac{\nabla_{l}\eta}{\eta}
-\frac{\partial f}{\partial\vartheta}\Psi^{'}\nabla_{l}\vartheta)\langle X,X_{l}\rangle_{L}\\
&\geq -\frac{c_{10}\alpha}{\eta}-c_{11},
\end{split}
\end{equation*}
where $c_{10}$ is a positive constant depending on $c_{7}$, $||\varphi||_{C^{1}(\overline{M^{n}})}$, $||\psi||_{C^{1}\left(\overline{M^{n}}\times\left[\inf\limits_{\partial M^{n}}u,\sup\limits_{\partial M^{n}}u\right]\times\mathbb{R}\right)}$, and $c_{11}>0$ depends on $||\varphi||_{C^{1}(\overline{M^{n}})}$, $||\psi||_{C^{2}\left(\overline{M^{n}}\times\left[\inf\limits_{\partial M^{n}}u,\sup\limits_{\partial M^{n}}u\right]\times\mathbb{R}\right)}$. Therefore
\begin{equation}\label{5.11}
\begin{split}
0&\geq -\frac{c_{12}\alpha}{\eta}-c_{9}\left[\alpha+(1+\varepsilon^{-1})\alpha^{2}\right]\frac{trF^{ij}}{\eta^{2}}
+\left[\Psi^{''}-(1+\varepsilon)(\Psi^{'})^{2}\right]F^{ij}\nabla_{i}\vartheta\nabla_{j}\vartheta\\
&\quad
+(\frac{\partial f}{\partial\vartheta}\cdot\vartheta-f)h_{11}+(\Psi^{'}\vartheta+1)F^{ij}h_{im}h_{jm}-c_{13},
\end{split}
\end{equation}
where constant $c_{12}>0$ depends on $c_{6}$, $c_{10}$, and constant $c_{13}>0$ depends on $c_{3}$ and $c_{11}$. By the Weingarten formula, it follows that
\begin{equation*}
F^{ij}\nabla_{i}\vartheta\nabla_{j}\vartheta=F^{ij}h_{il}h_{jk}\langle X, X_{l}\rangle_{L}\langle X, X_{k}\rangle_{L}
\leq c_{14}F^{ij}h_{il}h_{jk},
\end{equation*}
where $c_{14}$ is a positive constant depending on $||\varphi||_{C^{1}(\overline{M^{n}})}$, and then we can take a function $\Psi$ satisfying
\begin{equation}\label{5.12}
\Psi^{''}-(1+\varepsilon)(\Psi^{'})^{2}\leq 0.
\end{equation}
Since $M^{n}$ is bounded and $C^{2}$, there exists a positive constant $a=a(\rho)>\sup\limits_{M^{n}}u$ such that
\begin{equation*}
-a\leq\vartheta < -\sup\limits_{M^{n}}u.
\end{equation*}
Let us take
\begin{equation*}
\Psi(\vartheta)=-\log(2a+\vartheta),
\end{equation*}
so we have \eqref{5.12} and
\begin{equation*}
\Psi^{'}\vartheta+1+c_{14}\left(\Psi^{''}-(1+\varepsilon)(\Psi^{'})^{2}\right)\geq\frac{1}{2}
\qquad \mathrm{for} ~~ \varepsilon\leq\frac{2a^{2}}{c_{14}}.
\end{equation*}
From \eqref{5.11}, together with
\begin{equation*}
F^{ij}h_{im}h_{jm}=F^{ii}h_{ii}^{2}\geq\frac{\zeta^{2}}{n}h_{11}^{2}\mathrm{tr}F^{ij},
\end{equation*}
which follows from the assumption \eqref{5.9} and the fact
$F^{nn}\geq\frac{1}{n}\mathrm{tr}F^{ij}$, at $X_{0}$, we have that
\begin{equation*}
\begin{split}
0&\geq -\frac{c_{12}\alpha}{\eta}-c_{9}
\left[\alpha+(1+\varepsilon^{-1})\alpha^{2}\right]\frac{\mathrm{tr}F^{ij}}{\eta^{2}}\\
&\quad +(\frac{\partial
f}{\partial\vartheta}\cdot\vartheta-f)h_{11}+\frac{\zeta^{2}}{2n}h_{11}^{2}\mathrm{tr}F^{ij}-c_{13},
\end{split}
\end{equation*}
which implies an upper bound
\begin{equation*}
\eta h_{11}\leq\frac{c_{15}}{\zeta}\qquad \mathrm{at}~~ X_{0}.
\end{equation*}
Since
\begin{equation*}
\mathrm{tr}F^{ij}=\sum_{i=1}^{n}\frac{\partial}{\partial\lambda_{i}}\left[\left(\frac{\sigma_{k}}{\sigma_{l}}\right)^{\frac{1}{k-l}}\right]
\geq\left(\frac{C^{k}_{n}}{C^{l}_{n}}\right)^{\frac{1}{k-l}}>0,
\end{equation*}
where $c_{15}$ is a positive constant depending on $c_{9}$, $c_{12}$, $c_{13}$, $\alpha$, $M^{n}$, $||\varphi||_{C^{0}(\overline{M^{n}})}$.

\textbf{Case 2}. We now assume that
\begin{equation}\label{5.13}
h_{nn}\geq-\zeta h_{11}.
\end{equation}
Since $h_{11}\geq h_{22}\geq\cdots\geq h_{nn}$, we have
\begin{equation*}
h_{ii}\geq-\zeta h_{11}\qquad \mathrm{for~~all}~~ i=1,\cdots,n.
\end{equation*}
For a positive constant, assume to be $4$, we divide
$\{1,\cdots,n\}$ into two parts as follows
\begin{equation*}
I=\{i:\mathcal{P}\leq 4\mathcal{P}^{11}\},\qquad J=\{j:\mathcal{P}^{jj}>4\mathcal{P}^{11}\},
\end{equation*}
where $\mathcal{P}^{ii}:=\frac{\partial\mathcal{P}}{\partial h_{ii}}=\mathcal{P}_{i}$ is evaluated at $\lambda(X_{0})$.
Then for each $i\in I$, by (\ref{5.4}), we have
\begin{equation*}
\begin{split}
\mathcal{P}_{i}\frac{|\nabla_{i}h_{11}|^{2}}{h_{11}^{2}}&=\mathcal{P}_{i}\left(\alpha\frac{\nabla_{i}\eta}{\eta}
+\Psi^{'}\nabla_{i}\vartheta\right)^{2}\\
&\leq(1+\varepsilon^{-1})\alpha^{2}\mathcal{P}_{i}\frac{|\nabla_{i}\eta|}{\eta^{2}}+(1+\varepsilon)(\Psi^{'})^{2}\mathcal{P}_{i}|\nabla_{i}\vartheta|^{2}
\end{split}
\end{equation*}
for any $\varepsilon>0$. For each $j\in J$, we have
\begin{equation*}
\begin{split}
\alpha\mathcal{P}_{i}\frac{|\nabla_{j}\eta|^{2}}{\eta^{2}}
&=\alpha^{-1}\mathcal{P}_{j}\left(\frac{\nabla_{j}h_{11}}{h_{11}}+\Psi^{'}\nabla_{j}\vartheta\right)^{2}\\
&\leq\frac{1+\varepsilon}{\alpha}(\Psi^{'})^{2}\mathcal{P}_{j}|\nabla_{j}\vartheta|^{2}
+\frac{1+\varepsilon^{-1}}{\alpha}\mathcal{P}_{j}\frac{|\nabla_{j}h_{11}|^{2}}{h_{11}^{2}}
\end{split}
\end{equation*}
for any $\varepsilon>0$. Consequently,
\begin{equation*}
\begin{split}
&\qquad
\alpha\sum_{i=1}^{n}\mathcal{P}_{i}\frac{|\nabla_{i}\eta|^{2}}{\eta^{2}}
+\sum_{i=1}^{n}\mathcal{P}_{i}\frac{|\nabla_{i}h_{11}|^{2}}{h_{11}^{2}}\\
&\leq
\left[\alpha+(1+\varepsilon^{-1})\alpha^{2}\right]\sum_{i\in I}\mathcal{P}_{i}\frac{|\nabla_{i}\eta|^{2}}{\eta^{2}}
+(1+\varepsilon)(\Psi^{'})^{2}\sum_{i\in I}\mathcal{P}_{i}|\nabla_{i}\vartheta|^{2}\\
&\quad
+\frac{1+\varepsilon}{\alpha}(\Psi^{'})^{2}\sum_{j\in J}\mathcal{P}_{j}|\nabla_{j}\vartheta|^{2}
+\left[1+(1+\varepsilon^{-1})\alpha^{-1}\right]\sum_{j\in J}\mathcal{P}_{j}\frac{|\nabla_{j}h_{11}|^{2}}{h_{11}^{2}}\\
&\leq 4n\left[\alpha+(1+\varepsilon^{-1})\alpha^{2}\right]\mathcal{P}_{1}\frac{|\nabla_{i}\eta|^{2}}{\eta^{2}}
+(1+\varepsilon)(1+\alpha^{-1})(\Psi^{'})^{2}\sum_{i=1}^{n}\mathcal{P}_{i}|\nabla_{i}\vartheta|^{2}\\
&\quad
+\left[1+(1+\varepsilon^{-1})\alpha^{-1}\right]\sum_{j\in J}\mathcal{P}_{j}\frac{|\nabla_{j}h_{11}|^{2}}{h_{11}^{2}}.
\end{split}
\end{equation*}
Using this estimate and \eqref{5.8}, the follows inequality
\begin{equation*}
\begin{split}
0&\geq-\frac{c_{6}\alpha}{\eta}
-4n\left[\alpha+(1+\varepsilon^{-1})\alpha^{2}\right]\mathcal{P}_{1}\frac{|\nabla_{i}\eta|^{2}}{\eta^{2}}
+\left[\Psi^{''}-(1+\varepsilon)(1+\alpha^{-1})(\Psi^{'})^{2}\right]\mathcal{P}_{i}|\nabla_{i}\vartheta|^{2}\\
&\quad
+\Psi^{'}\nabla_{l}f\langle X,X_{l}\rangle_{L}+(\frac{\partial f}{\partial\vartheta}\cdot\vartheta-f)h_{11}
+(\Psi^{'}\vartheta+1)F^{ij}h_{im}h_{jm}
+\frac{\partial f}{\partial\vartheta}\frac{\nabla_{l}h_{11}\langle X,X_{l}\rangle_{L}}{h_{11}}\\
&\quad
-\frac{1}{h_{11}}F^{ij,pq}\nabla_{1}h_{ij}\nabla_{1}h_{pq}
-\left[1+(1+\varepsilon^{-1})\alpha^{-1}\right]\sum_{j\in J}\mathcal{P}_{j}\frac{|\nabla_{j}h_{11}|^{2}}{h_{11}^{2}}-c_{13}
\end{split}
\end{equation*}
holds at $X_{0}$. Then as \textbf{Case 1}, we have that for an appropriate selection of $\Psi$,
\begin{equation}\label{5.14}
\begin{split}
0&\geq-\frac{c_{12}\alpha}{\eta}
-c_{16}(\alpha+\alpha^{2})\frac{\mathcal{P}_{1}}{\eta^{2}}+\frac{1}{2n}\mathcal{P}_{1}h_{11}^{2}
+(\frac{\partial f}{\partial\vartheta}\cdot\vartheta-f)h_{11}-c_{13}\\
&\quad
-\frac{1}{h_{11}}F^{ij,pq}\nabla_{1}h_{ij}\nabla_{1}h_{pq}
-[1+c_{17}\alpha^{-1}]\sum_{j\in J}\mathcal{P}_{j}\frac{|\nabla_{j}h_{11}|^{2}}{h_{11}^{2}},
\end{split}
\end{equation}
where $c_{16}>0$ depends on $n$, $\varepsilon^{-1}$, and $c_{17}=(1+\varepsilon^{-1})$.

We \textbf{claim} that
\begin{equation}\label{5.15}
\begin{split}
-\frac{1}{h_{11}}F^{ij,pq}\nabla_{1}h_{ij}\nabla_{1}h_{pq}-[1+c_{17}\alpha^{-1}]\sum_{j\in J}\mathcal{P}_{j}
\frac{|\nabla_{j}h_{11}|^{2}}{h_{11}^{2}}\geq 0.
\end{split}
\end{equation}
If \eqref{5.15} holds, then from \eqref{5.14} we have
\begin{equation*}
(\frac{\partial f}{\partial\vartheta}\cdot\vartheta-f)h_{11}+\frac{1}{2n}\mathcal{P}_{1}h_{11}^{2}\leq c_{18}(1+\frac{1}{\eta}+\frac{\mathcal{P}_{1}}{\eta^{2}}),
\end{equation*}
from which we again get a bound for $\eta h_{11}$ at $X_{0}$ due to
condition \eqref{f's conditions}, where $c_{18}>0$ depends on
$c_{12}$, $c_{13}$, $c_{16}$, $c_{17}$ and $\alpha$. The proof of
this\textbf{ claim} can be found at the end of the proof of
\cite[Theorem 1.3]{GLM}, so we omit it here. Then, the proof of
Theorem \ref{main1.2} is finished.
\end{proof}

\section{$C^{2}$ boundary estimates}
Throughout this section, we just assume that $M^{n}$ is strictly
convex. By N. M. Ivochkina, M. Lin, N. S. Trudinger \cite{nmi,mt1}
and Pierre Bayard \cite{Bayard}, we have the follow inequality:
$\exists\mathcal{B}_{0}=\mathcal{B}_{0}(n,k,l)$ such that in
$\Gamma_{k}$, $\forall i\in\{1,\cdots,k\}$,

\begin{equation}\label{sigma inequality}
\left( \frac{\sigma_{k}}{\sigma_{l}}
\right)_{\lambda_{i}}\cdot\lambda_{i}^{2}\leq\lambda_{i}\left(
\frac{\sigma_{k}}{\sigma_{l}} \right) + \mathcal{B}_{0}\sum_{i\neq
j}\left( \frac{\sigma_{k}}{\sigma_{l}}
\right)_{\lambda_{j}}\cdot\lambda_{j}^{2}.
\end{equation}

Let $x_{0}$ be a boundary-point, and $\{e_{1},\cdots,e_{n}\}$ be an
adapted basis such that at $x_{0}$,
$\sup\limits_{\overline{M^{n}}}\frac{|Du|}{u}=\sup\limits_{\overline{M^{n}}}|D\pi|\leq\rho<1$.

\begin{lemma}\label{lemma 6.1}
Let $\mathfrak{g}: \overline{M^{n}}\cap\overline{B}_{n}(x_{0})\times B(0,1)\to\mathbb{R}$, $(x,p)\mapsto\mathfrak{g}(x,p)$ be a function of class $C^{2}$, concave with respect to $p$, where $B_{r}(x_{0}):=\{x\in\mathbb{R}^{n+1}_{1}\mid |x-x_{0}|_{L}\leq r\}$, $B(0,1):=\{x\in\mathbb{R}^{n+1}_{1}\mid |x|_{L}<1\}$, and $\mathbb{W}=\mathfrak{g}(\cdot, Du)-\frac{\mathcal{B}}{2}\sum_{s=1}^{n-1}\left(u_{s}-u_{s}(x_{0})\right)^{2}$. If $D_{r,\rho}$ denotes the compact $\overline{M^{n}}\cap\overline{B}_{r}(x_{0})\times\overline{B}(0,\rho)$, for $\mathcal{B}=\mathcal{B}\left(n,k,l,\rho,\mathcal{B}_{0},\Vert\mathfrak{g}\Vert_{1,D_{r,\rho}}\right)$ sufficiently large, $\mathbb{W}$ satisfies on $M^{n}\cap B_{r}(x_{0})$ the inequality:
$$\sum_{i,j}\frac{\partial}{\partial q_{ij}}\left( \frac{\sigma_{k}}{\sigma_{l}}(u) \right)\mathbb{W}_{ij}\leq\mathcal{B}_{1}\left(1+|D\mathbb{W}|+\sum_{i,j}\frac{\partial}{\partial q_{ij}}\left( \frac{\sigma_{k}}{\sigma_{l}}(u) \right)\mathbb{W}_{i}\mathbb{W}_{j}+\frac{\sigma_{k-1}}{\sigma_{l}}(u)\right),$$
where $\mathcal{B}_{1} = \mathcal{B}_{1}\left(n,k,l,M^{n},\psi,\rho,\mathcal{B}_{0},\Vert\mathfrak{g}\Vert_{2,D_{r,\rho}}\right)$.

\end{lemma}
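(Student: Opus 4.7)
The plan is to compute $D\mathbb{W}$ and $D^2\mathbb{W}$ by the chain rule, contract with the linearized operator $F^{ij} := \frac{\partial}{\partial q_{ij}}\bigl(\frac{\sigma_k}{\sigma_l}(u)\bigr)$, and then bound each emerging piece either by the good sign terms or by the quantities appearing on the right-hand side. First I would differentiate once to obtain
\[
\mathbb{W}_i = \mathfrak{g}_{x_i} + \mathfrak{g}_{p_s}u_{si} - \mathcal{B}\sum_{s=1}^{n-1}(u_s - u_s(x_0))u_{si},
\]
which I keep in reserve: it will let me re-express the tangential Hessian entries $u_{si}$ ($s \le n-1$) modulo $\mathbb{W}_i$ plus first-order data, and this is exactly where the $|D\mathbb{W}|$ and $F^{ij}\mathbb{W}_i\mathbb{W}_j$ terms on the right-hand side originate. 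Differentiating once more gives the block structure
\[
\mathbb{W}_{ij} = \mathfrak{g}_{x_ix_j} + 2\mathfrak{g}_{x_ip_s}u_{sj} + \mathfrak{g}_{p_sp_t}u_{si}u_{tj} + \mathfrak{g}_{p_s}u_{sij} - \mathcal{B}\sum_{s=1}^{n-1}u_{si}u_{sj} - \mathcal{B}\sum_{s=1}^{n-1}(u_s - u_s(x_0))u_{sij}.
\]

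Next I would contract with $F^{ij}$ and exploit three features. The concavity of $\mathfrak{g}$ in $p$ forces $F^{ij}\mathfrak{g}_{p_sp_t}u_{si}u_{tj} \le 0$, so this entire quadratic block is discarded. The third-order terms $F^{ij}u_{sij}$ coming from $\mathfrak{g}_{p_s}$ and from the barrier sum are controlled by differentiating the PDE $\frac{\sigma_k}{\sigma_l}[u] = \psi(x,u,\vartheta)$ in the tangential direction $e_s$; this produces a first derivative of $\psi$ plus commutator corrections that are bounded linearly in $\mathrm{tr}\,F^{ij}$ and absorbed using the elementary lower bound $\mathrm{tr}\,F^{ij} \ge (C_n^k/C_n^l)^{1/(k-l)}$ together with $\frac{\sigma_{k-1}}{\sigma_l}(u)$. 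The remaining strictly negative quadratic $-\mathcal{B}\sum_{s=1}^{n-1}F^{ij}u_{si}u_{sj}$ is the gain that has to swallow everything else when $\mathcal{B}$ is chosen large.

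The decisive step is the invocation of \eqref{sigma inequality}. After contracting with $F^{ij}$, the bad contribution concentrates on the single index $i=n$ (the normal-to-$\partial M^n$ direction), where $\lambda_n = u_{nn}$ can be very large; inequality \eqref{sigma inequality} transfers this contribution onto the other indices at the cost of an additive $\lambda_i \cdot\frac{\sigma_k}{\sigma_l}$, which is absorbed into the $\frac{\sigma_{k-1}}{\sigma_l}(u)$ term on the right. For $\mathcal{B}$ chosen larger than a constant multiple of $\mathcal{B}_0\cdot\|\mathfrak{g}\|_{1,D_{r,\rho}}^2$, the transferred contribution is dominated by $-\mathcal{B}\sum_{s\le n-1}F^{ij}u_{si}u_{sj}$, and the surviving remainder rearranges exactly into the stated right-hand side.

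The main obstacle will be the bookkeeping around the third-order term $F^{ij}u_{sij}$. Because the ambient space is the hyperbolic plane $\mathscr{H}^n(1)$, commuting covariant derivatives using Codazzi and the Gauss equation \eqref{Gauss-1} produces curvature corrections that are not present in the Euclidean case; these have to be traced explicitly and shown to be absorbed either by the negative quadratic or by the $\frac{\sigma_{k-1}}{\sigma_l}(u)$ allowance. Doing this while simultaneously substituting $u_{si}$ ($s\le n-1$) in favor of $\mathbb{W}_i$ plus first-order data is the delicate point, and it is what forces the dependence of $\mathcal{B}_1$ on $\|\mathfrak{g}\|_{2,D_{r,\rho}}$, on $\psi$, and on the geometry of $M^n$.
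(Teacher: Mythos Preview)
Your outline has the right ingredients at the outset---differentiate $\mathbb{W}$, exploit concavity of $\mathfrak{g}$ in $p$, differentiate the PDE to dispose of the third-order terms, and use the negative quadratic $-\mathcal{B}\sum_{s\le n-1}F^{ij}u_{si}u_{sj}$ as the absorbing term---but the decisive step is misidentified, and as stated the argument does not close.

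The gap is the sentence ``the bad contribution concentrates on the single index $i=n$ \dots where $\lambda_n=u_{nn}$.''  The eigenvalues $\lambda_\alpha$ are principal curvatures, computed in a frame of principal directions $\tilde\tau_\alpha$, which in general is \emph{not} the adapted frame $\{e_1,\dots,e_{n-1},e_n\}$; there is no reason for $\lambda_n$ to equal $u_{nn}$ or for $e_n$ to be a principal direction.  The paper (following Bayard) therefore passes to the principal frame, writing $e_s=\sum_\alpha\tilde\eta_s^\alpha\tilde\tau_\alpha$, so that the good negative quadratic becomes
\[
-\mathcal{B}\sum_{\alpha}\Bigl(\tfrac{\sigma_k}{\sigma_l}\Bigr)_{\lambda_\alpha}\lambda_\alpha^2\sum_{s=1}^{n-1}|\tilde\eta_s^\alpha|^2.
\]
This controls $\lambda_\alpha^2$ \emph{only} when the tangential weight $\sum_{s\le n-1}|\tilde\eta_s^\alpha|^2$ is bounded below.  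Hence a genuine case split is forced: either every principal direction has tangential weight $\ge\delta_\varepsilon$ (then your absorption idea works), or some $\tilde\tau_1$ is nearly normal (tangential weight $<\delta_\varepsilon$).  In the latter case the good quadratic gives nothing for $\alpha=1$, and one must subdivide again.  If $\lambda_1\le 0$, inequality \eqref{sigma inequality} does transfer $(\sigma_k/\sigma_l)_{\lambda_1}\lambda_1^2$ onto the remaining $\alpha\ge 2$, which by a lemma of Bayard \emph{do} have tangential weight bounded below.  If $\lambda_1>0$, however, \eqref{sigma inequality} is not enough by itself: one has to estimate the cross term $Q=(\sigma_k/\sigma_l)_{\lambda_1}\lambda_1|\mathbb W_1|$ using the algebraic identity $(\sigma_k/\sigma_l)_{\lambda_\alpha}\lambda_\alpha=\psi-\widetilde\Gamma(k,l,\alpha)$, a smallness bound on $|\tilde\eta_s^1|$ coming from $\delta_\varepsilon$, and a further estimate of $-\widetilde\Gamma(k,l,1)\lambda_1$ in terms of $\sum_{\alpha\ge 2}(\sigma_k/\sigma_l)_{\lambda_\alpha}\lambda_\alpha^2$.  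This last subcase is the heart of the proof and is entirely absent from your sketch.

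In short: the third-order commutation you flag as the ``main obstacle'' is routine (it is handled in one line via \eqref{third derivative}); the real obstacle is the misalignment between the principal frame and the tangential/normal splitting, which your outline does not address.
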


\begin{proof} Let us denote by $\tilde{\tau}_{\alpha}$, $\alpha=1,\cdots,n$, vectors of $\mathscr{H}^{n}(1)$ induced by the map $x\mapsto X:=(x,u(x))$,
 an orthonormal basis of principle vectors of $T_{X}\mathcal{G}$, and $\left( \tilde{\eta}_{s}^{\alpha} \right)$ such that, $\forall s\in\{1,\cdots,n\}$, $e_{s}=\sum_{\alpha=1}^{n}\tilde{\eta}^{\alpha}_{s}\tilde{\tau}_{\alpha}$.
 Define $\left(\tilde{\tau}_{\alpha}^{s} \right)$ such that, $\forall\alpha\in\{1,\cdots,n\}$, $\tilde{\tau}_{\alpha}=\sum_{s=1}^{n}\tilde{\tau}_{\alpha}^{s}e_{s}$. We thus have from the definition $\left(\tilde{\eta}_{s}^{\alpha} \right) = \left( \tilde{\tau}_{\alpha}^{s} \right)^{-1}$.

We will use the Greek letters and the Latin letters for derivatives
in the basis $\left\{\tilde{\tau}_{\alpha}, \alpha=1,\cdots,n
\right\}$ and $\left\{ e_{s}, s=1,\cdots,n \right\}$, respectively.
For instance, $u_{\alpha\beta}$ and $u_{s\alpha}$ will denote
respectively $D^{2}u(\tilde{\tau}_{\alpha}, \tilde{\tau}_{\beta})$
and $D^{2}u(e_{s}, \tilde{\tau}_{\alpha})$. In view of the choice of
the $\tilde{\tau}_{\alpha}$, the quantities
$\frac{1}{v}(u_{\alpha\alpha} + u - \frac{2}{u}u_{\alpha}^{2})$,
$\alpha = 1,\cdots,n$, are the principal curvatures of the graph
$\mathcal{G}$ of $u$. The inequality in Lemma \ref{lemma 6.1} may
then be written as
\begin{equation*}
\sum_{\alpha=1}^{n}\left( \frac{\sigma_{k}}{\sigma_{l}} \right)_{\lambda_{\alpha}}\mathbb{W}_{\alpha\alpha}
\leq
\mathcal{B}_{1}\left( 1+ |D\mathbb{W}| + \sum_{\alpha=1}^{n}\left( \frac{\sigma_{k}}{\sigma_{l}} \right)_{\lambda_{\alpha}}\mathbb{W}_{\alpha}^{2} + \frac{\sigma_{k-1}}{\sigma_{l}} \right).
\end{equation*}

For the first and second derivatives of $\mathbb{W}$, we have
$\forall\alpha\in\{1,\cdots,n\}$,
\begin{eqnarray*}
\mathbb{W}_{\alpha} = \mathfrak{g}_{\alpha} +
\sum_{t=1}^{n}\mathfrak{g}_{p_{t}}u_{t\alpha} -
\mathcal{B}\sum_{s=1}^{n-1}u_{s\alpha}\left(u_{s}-u_{s}(x_{0})\right),
 \end{eqnarray*}
  and
\begin{equation*}
\begin{aligned}
\mathbb{W}_{\alpha\alpha} &= \mathfrak{g}_{\alpha\alpha} + 2\sum_{t=1}^{n}\mathfrak{g}_{\alpha p_{t}}u_{t\alpha} + \sum_{s,t=1}^{n}\mathfrak{g}_{p_{t}p_{s}}u_{t\alpha}u_{s\alpha} \\
&\quad
+ \sum_{t=1}^{n}\mathfrak{g}_{p_{t}}u_{t\alpha\alpha} - \mathcal{B}\left(u_{s\alpha\alpha}(u_{s}-u_{s}(x_{0})) + u_{s\alpha}^{2} \right).
\end{aligned}
\end{equation*}
The following formula can represent the third derivatives of $u$
\begin{equation}\label{third derivative}
\begin{aligned}
&\quad\left( \frac{\sigma_{k}}{\sigma_{l}}(u) \right)_{,i}\\
& = \left( \frac{\sigma_{k}}{\sigma_{l}}(u)
\right)_{\lambda_{\alpha}}\cdot\lambda_{\alpha,i}\\
& =
\left( \frac{\sigma_{k}}{\sigma_{l}}(u) \right)_{\lambda_{\alpha}}\cdot\\
&\qquad
\left[ \frac{u_{\alpha}}{u}\cdot(\frac{u_{\alpha i}}{u} - \frac{u_{\alpha}u_{i}}{u^{2}})\cdot(u_{\alpha\alpha} + u - \frac{2}{u}u_{\alpha}^{2}) + (u_{\alpha\alpha i} + u_{i} - \frac{4u_{\alpha}u_{\alpha i}}{u} + \frac{2u_{\alpha}^{2}u_{i}}{u^{2}})\right]\\
&=
\psi_{i}.
\end{aligned}
\end{equation}
Since $u_{imm} = u_{mmi} - u_{i}$, by the use of $u_{s\alpha} = \tilde{\eta}_{s}^{\alpha}u_{\alpha\alpha}$, we get:
\begin{equation*}
\begin{aligned}
&\quad
\sum_{\alpha=1}^{n}\frac{\partial}{\partial\lambda_{\alpha}}\left( \frac{\sigma_{k}}{\sigma_{l}} \right)\mathbb{W}_{\alpha\alpha}\\
&=
\sum_{t=1}^{n}\mathfrak{g}_{p_{t}}\psi_{t} - \mathcal{B}\sum_{s=1}^{n-1}\psi_{s}\left(u_{s} - u_{s}(x_{0})\right) \\
&\quad
+ \sum_{\alpha=1}^{n}\left( \frac{\sigma_{k}}{\sigma_{l}} \right)_{\lambda_{\alpha}}\cdot
\left\{\mathfrak{g}_{\alpha\alpha} + 2\sum_{t=1}^{n}\tilde{\eta}_{t}^{\alpha}\mathfrak{g}_{\alpha p_{t}}u_{\alpha\alpha} + \sum_{s,t=1}^{n}\mathfrak{g}_{p_{t}p_{s}}\tilde{\eta}_{s}^{\alpha}\tilde{\eta}_{t}^{\alpha}u_{\alpha\alpha}^{2} \right.\\
&\quad
\left.- \mathcal{B}u_{\alpha\alpha}^{2}\sum_{s=1}^{n-1}|\tilde{\eta}_{s}^{\alpha}|^{2} +
\left(4\pi_{\alpha} - \frac{\pi_{\alpha}\lambda_{\alpha}}{u}\right)\mathbb{W}_{\alpha}+\left(\frac{\pi_{\alpha}\lambda_{\alpha}}{u} - 4\pi_{\alpha}\right)\mathfrak{g}_{\alpha}\right.\\
&\quad
\left. - \sum_{t=1}^{n}\mathfrak{g}_{p_{t}}\left[2u_{t}(1+\pi_{\alpha}^{2}) - \pi_{\alpha}^{2}\pi_{t}\lambda_{\alpha}\right] +
\mathcal{B}\sum_{s=1}^{n-1}\left[2u_{s}(1+\pi_{\alpha}^{2}) - \pi_{\alpha}^{2}\pi_{s}\lambda_{\alpha}\right]\left(u_{s}-u_{s}(x_{0})\right)
\right\},
\end{aligned}
\end{equation*}
where $\pi = \log u$. It's easy to estimate
$$\sum_{t=1}^{n}\mathfrak{g}_{p_{t}}\psi_{t} - \mathcal{B}\sum_{s=1}^{n-1}\psi_{s}\left(u_{s} - u_{s}(x_{0})\right) \leq b_{1}\left( 1+|D\mathbb{W}| \right),$$
where positive constant $b_{1}$ depends on $\psi$. Moreover, the term

\begin{equation*}
\begin{aligned}
\sum_{\alpha=1}^{n}\left( \frac{\sigma_{k}}{\sigma_{l}} \right)_{\lambda_{\alpha}}\left( \mathfrak{g}_{\alpha\alpha} + 2\sum_{t=1}^{n}\tilde{\eta}_{t}^{\alpha}\mathfrak{g}_{\alpha p_{t}}u_{\alpha\alpha} + \left(\frac{\pi_{\alpha}\lambda_{\alpha}}{u}- 4\pi_{\alpha}\right)\mathfrak{g}_{\alpha} - \sum_{t=1}^{n}\mathfrak{g}_{p_{t}}\left[2u_{t}(1+\pi_{\alpha}^{2}) - \pi_{\alpha}^{2}\pi_{t}\lambda_{\alpha}\right] \right)
\end{aligned}
\end{equation*}
is easily estimated by $b_{2}\left( \frac{\sigma_{k-1}}{\sigma_{l}}
+ \sum_{\alpha=1}^{n}\left( \frac{\sigma_{k}}{\sigma_{l}}
\right)_{\lambda_{\alpha}}|\lambda_{\alpha}| \right)$, positive
constant $b_{2}$ depends on $\Vert\mathfrak{g}\Vert_{2,D_{r,\rho}}$,
$\rho$, $M^{n}$, $k$, $l$. Recalling that $\mathfrak{g}$ is concave
w.r.t. $p$, for all $\alpha\in\{1,\cdots,n\}$,
$\sum_{s,t=1}^{n}\mathfrak{g}_{p_{t}p_{s}}\tilde{\eta}_{s}^{\alpha}\tilde{\eta}_{t}^{\alpha}\leq
0$, and then we thus finally get the estimate of
$\sum_{\alpha=1}^{n}\left( \frac{\sigma_{k}}{\sigma_{l}}
\right)_{\lambda_{\alpha}}\mathbb{W}_{\alpha\alpha}$:
\begin{equation}\label{finally estimate}
\begin{aligned}
&\quad
\sum_{\alpha=1}^{n}\left( \frac{\sigma_{k}}{\sigma_{l}} \right)_{\lambda_{\alpha}}\mathbb{W}_{\alpha\alpha}\\
&\leq
b_{3}\left( 1+|D\mathbb{W}| + \frac{\sigma_{k-1}}{\sigma_{l}} + \sum_{\alpha=1}^{n}\left( \frac{\sigma_{k}}{\sigma_{l}} \right)_{\lambda_{\alpha}}|\lambda_{\alpha}| \right)
+ \sum_{\alpha=1}^{n}\left( \frac{\sigma_{k}}{\sigma_{l}} \right)_{\lambda_{\alpha}}\left(-\mathcal{B}u_{\alpha\alpha}^{2}\sum_{s=1}^{n-1}|\tilde{\eta}_{s}^{\alpha}|^{2}\right. \\
&\qquad\qquad
\left. + \left( 4\pi_{\alpha} - \frac{\pi_{\alpha}\lambda_{\alpha}}{u} \right)\mathbb{W}_{\alpha} + \mathcal{B}\sum_{s=1}^{n-1}\left[ 2u_{s}(1+\pi_{\alpha}^{2}) - \pi_{\alpha}^{2}\pi_{s}\lambda_{\alpha} \right](u_{s}-u_{s}(x_{0}))\right)\\
&\leq
b_{3}\left( 1+|D\mathbb{W}| + \frac{\sigma_{k-1}}{\sigma_{l}} + \sum_{\alpha=1}^{n}\left( \frac{\sigma_{k}}{\sigma_{l}} \right)_{\lambda_{\alpha}}|\lambda_{\alpha}| \right)\\
&\quad
+\sum_{\alpha=1}^{n}\left( \frac{\sigma_{k}}{\sigma_{l}} \right)_{\lambda_{\alpha}}\left( -b_{4}\mathcal{B}\lambda_{\alpha}^{2}\sum_{s=1}^{n-1}|\tilde{\eta}_{s}^{\alpha}|^{2} + b_{5}|\mathbb{W}_{\alpha}| + b_{6}|\lambda_{\alpha}||\mathbb{W}_{\alpha}| \right),
\end{aligned}
\end{equation}
where the positive constant $b_{3}$ depends on $b_{1}$, $b_{2}$,
positive constants $b_{4}$, $b_{5}$, $b_{6}$ depend on
$\rho$, $M^{n}$. By \cite[Lemma 4.3]{Bayard}, we denote
$\delta_{\varepsilon}:=\delta_{\varepsilon}(\varepsilon,\rho,n)$,
where $\varepsilon\in(0,1)$. Let us consider two types of points:
either $\forall\alpha\in\{1,\cdots,n\}$,
$\sum_{s=1}^{n-1}|\tilde{\eta}_{s}^{\alpha}|^{2}\geq\delta_{\varepsilon}$,
or $\exists\alpha\in\{1,\cdots,n\}$ (for example $\alpha=1$) such
that
$\sum_{s=1}^{n-1}|\tilde{\eta}_{s}^{\alpha}|^{2}<\delta_{\varepsilon}$.

For the points of the first type, one has
\begin{equation*}
\begin{aligned}
&\quad
\sum_{\alpha=1}^{n}\left( \frac{\sigma_{k}}{\sigma_{l}} \right)_{\lambda_{\alpha}}\left( -b_{4}\mathcal{B}\lambda_{\alpha}^{2}\sum_{s=1}^{n-1}|\tilde{\eta}_{s}^{\alpha}|^{2} + b_{5}|\mathbb{W}_{\alpha}| + b_{6}|\lambda_{\alpha}||\mathbb{W}_{\alpha}|\right)\\
&\leq
\sum_{\alpha=1}^{n}\left( \frac{\sigma_{k}}{\sigma_{l}} \right)_{\lambda_{\alpha}}\left( b_{7}( 1+\frac{b_{8}}{\delta_{\varepsilon}} )\mathbb{W}_{\alpha}^{2} + b_{9}\delta_{\varepsilon}\lambda_{\alpha}^{2}  - b_{4}\mathcal{B}\delta_{\varepsilon}\lambda_{\alpha}^{2} \right),
\end{aligned}
\end{equation*}
where the positive constant $b_{7}$ depends on $b_{5}$, $b_{6}$, the
positive constant $b_{8}$ depends on $b_{6}$, $b_{7}$, and the positive
constant $b_{9}$ depends on $b_{6}$, $b_{7}$, $b_{8}$. Since
the term $b_{3}\sum_{\alpha=1}^{n}\left(
\frac{\sigma_{k}}{\sigma_{l}}
\right)_{\lambda_{\alpha}}|\lambda_{\alpha}|$ in \eqref{finally
estimate} can be estimated by
$$b_{3}\sum_{\alpha=1}^{n}\left( \frac{\sigma_{k}}{\sigma_{l}} \right)_{\lambda_{\alpha}}|\lambda_{\alpha}|
\leq \frac{b_{10}}{\delta_{\varepsilon}}\left(
\frac{\sigma_{k-1}}{\sigma_{l}} \right) +
\frac{\delta_{\varepsilon}}{2}\sum_{\alpha=1}^{n}\left(
\frac{\sigma_{k}}{\sigma_{l}}
\right)_{\lambda_{\alpha}}\cdot\lambda_{\alpha}^{2},$$ where the
positive constant $b_{10}$ depends on $b_{3}$, taking $\mathcal{B}$
large enough, we can get a bound on $\sum_{\alpha=1}^{n}\left(
\frac{\sigma_{k}}{\sigma_{l}}
\right)_{\lambda_{\alpha}}\mathbb{W}_{\alpha\alpha}$ of the expected
form.

For the points of the second type, let us consider two cases:

$\mathbf{First~case}$. $\lambda_{1}\leq 0$. The inequality
\eqref{sigma inequality} then becomes
\begin{equation}\label{sigma inequality 2}
\left( \frac{\sigma_{k}}{\sigma_{l}} \right)_{\lambda_{1}}\cdot\lambda_{1}^{2}\leq \mathcal{B}_{0}\sum_{\alpha=2}^{n}\left( \frac{\sigma_{k}}{\sigma_{l}} \right)_{\lambda_{\alpha}}\cdot\lambda_{\alpha}^{2}.
\end{equation}
From \cite[Lemma 4.3]{Bayard}, for $\alpha\geq2$,
$\exists\delta_{\varepsilon^{'}}:=\delta_{\varepsilon^{'}}(n,\rho,\varepsilon)$,
s.t.
$\sum_{s=1}^{n-1}|\tilde{\eta}_{s}^{\alpha}|^{2}\geq\delta_{\varepsilon^{'}}$.
Hence
$$\sum_{\alpha=2}^{n}\left( \frac{\sigma_{k}}{\sigma_{l}} \right)_{\lambda_{\alpha}}\cdot\lambda_{\alpha}^{2}\leq\frac{1}{\delta_{\varepsilon^{'}}}\left( \sum_{\alpha=2}^{n}( \frac{\sigma_{k}}{\sigma_{l}} )_{\lambda_{\alpha}}\cdot\lambda_{\alpha}^{2}\sum_{s=1}^{n-1}|\tilde{\eta}_{s}^{\alpha}|^{2} \right),$$
and by using inequality \eqref{sigma inequality 2}, it follows that
$$\sum_{\alpha=1}^{n}\left( \frac{\sigma_{k}}{\sigma_{l}} \right)_{\lambda_{\alpha}}\cdot\lambda_{\alpha}^{2}\leq\frac{\mathcal{B}_{0}+1}{\delta_{\varepsilon^{'}}}\left( \sum_{\alpha=1}^{n}( \frac{\sigma_{k}}{\sigma_{l}} )_{\lambda_{\alpha}}\cdot\lambda_{\alpha}^{2}\sum_{s=1}^{n-1}|\tilde{\eta}_{s}^{\alpha}|^{2} \right).$$
Thus, we can obtain
\begin{equation*}
\begin{aligned}
&\quad
\sum_{\alpha=1}^{n}\left( \frac{\sigma_{k}}{\sigma_{l}} \right)_{\lambda_{\alpha}}\left( -b_{4}\mathcal{B}\lambda_{\alpha}^{2}\sum_{s=1}^{n-1}|\tilde{\eta}_{s}^{\alpha}|^{2} + b_{5}|\mathbb{W}_{\alpha}| + b_{6}|\lambda_{\alpha}||\mathbb{W}_{\alpha}|  \right)\\
&\leq
\sum_{\alpha=1}^{n}\left( \frac{\sigma_{k}}{\sigma_{l}} \right)_{\lambda_{\alpha}}\left( b_{5}|\mathbb{W}_{\alpha}| + b_{6}|\lambda_{\alpha}||\mathbb{W}_{\alpha}|  - \frac{\mathcal{B}}{b_{11}}\lambda_{\alpha}^{2} \right)\\
&\leq
\sum_{\alpha=1}^{n}\left( \frac{\sigma_{k}}{\sigma_{l}} \right)_{\lambda_{\alpha}}\left( b_{7}( 1+\frac{b_{8}}{\delta_{\varepsilon}} )\mathbb{W}_{\alpha}^{2} + b_{9}\delta_{\varepsilon}\lambda_{\alpha}^{2} -\frac{\mathcal{B}}{b_{11}}\lambda_{\alpha}^{2} \right),
\end{aligned}
\end{equation*}
with
$b_{11}=\frac{\mathcal{B}_{0}+1}{b_{4}\delta_{\varepsilon^{'}}}$,
and
$$b_{3}\sum_{\alpha=1}^{n}\left( \frac{\sigma_{k}}{\sigma_{l}} \right)_{\lambda_{\alpha}}|\lambda_{\alpha}|
\leq b_{12}\left( \frac{\sigma_{k-1}}{\sigma_{l}} \right)+
\frac{1}{2b_{13}}\sum_{\alpha=1}^{n}\left(
\frac{\sigma_{k}}{\sigma_{l}}
\right)_{\lambda_{\alpha}}\cdot\lambda_{\alpha}^{2},$$ where
positive constants $b_{12}$, $b_{13}$ depend on $b_{3}$. Now one can
give the expected bound on $\sum_{\alpha=1}^{n}\left(
\frac{\sigma_{k}}{\sigma_{l}}
\right)_{\lambda_{\alpha}}\mathbb{W}_{\alpha\alpha}$ with
$\mathcal{B}$ large enough.

$\mathbf{Second~case}$. $\lambda_{1}>0$. Then one has
\begin{equation*}
\begin{aligned}
&\quad
\sum_{\alpha=1}^{n}\left( \frac{\sigma_{k}}{\sigma_{l}} \right)_{\lambda_{\alpha}}\left(b_{5}|\mathbb{W}_{\alpha}| + b_{6}|\lambda_{\alpha}||\mathbb{W}_{\alpha}|  \right)\\
&=
\left( \frac{\sigma_{k}}{\sigma_{l}} \right)_{\lambda_{1}}\left(b_{5}|\mathbb{W}_{1}| + b_{6}|\lambda_{1}||\mathbb{W}_{1}|  \right)\\
&\quad
+\sum_{\alpha=2}^{n}\left( \frac{\sigma_{k}}{\sigma_{l}} \right)_{\lambda_{\alpha}}\left(b_{5}|\mathbb{W}_{\alpha}| + b_{6}|\lambda_{\alpha}||\mathbb{W}_{\alpha}|  \right)\\
&\leq
b_{14}\left( \frac{\sigma_{k}}{\sigma_{l}} \right)_{\lambda_{1}}\cdot\lambda_{1}\mathbb{W}_{1} +
b_{15}\sum_{\alpha=1}^{n}\left( \frac{\sigma_{k}}{\sigma_{l}} \right)_{\lambda_{\alpha}}\mathbb{W}_{\alpha}^{2} + \sum_{\alpha=2}^{n}\left( \frac{\sigma_{k}}{\sigma_{l}} \right)_{\lambda_{\alpha}}\cdot\lambda_{\alpha}^{2},
\end{aligned}
\end{equation*}
where the positive constants $b_{14}$, $b_{15}$ depend on $b_{5}$, $b_{6}$. Let us bound $Q=\left( \frac{\sigma_{k}}{\sigma_{l}} \right)_{\lambda_{1}}\lambda_{1}\mathbb{W}_{1}$. $Q=\mathbb{W}_{1}\left( \psi-\widetilde{\Gamma}(k,l,1) \right)$ with $\psi = \left( \frac{\sigma_{k}}{\sigma_{l}} \right)_{\lambda_{\alpha}}\cdot\lambda_{\alpha} + \widetilde{\Gamma}(k,l,\alpha)$, where $\alpha=1,\cdots,n$, $\widetilde{\Gamma}(k,l,\alpha) = \frac{\sigma_{l}\sigma_{k}(\lambda|\alpha) - \sigma_{k}\sigma_{l}(\lambda|\alpha)}{\sigma_{l}^{2}} + \frac{\sigma_{k}}{\sigma_{l}}$, and $\left( \frac{\sigma_{k}}{\sigma_{l}} \right)_{\lambda_{1}}\lambda_{1}$ is positive. We must again consider two cases:

If $\widetilde{\Gamma}(k,l,1)\geq 0$,
$$|Q|\leq|\mathbb{W}_{1}|\left(\psi - \widetilde{\Gamma}(k,l,1)\right)\leq|\mathbb{W}_{1}|\psi\leq b_{16}|D\mathbb{W}|,$$
where the positive constant $b_{16}$ depends on $\psi$.

If $\widetilde{\Gamma}(k,l,1)<0$,
\begin{equation}\label{f1}
|Q|\leq -|\mathbb{W}_{1}|\widetilde{\Gamma}(k,l,1) + |\mathbb{W}_{1}|\psi \leq -|\mathbb{W}_{1}|\widetilde{\Gamma}(k,l,1) + b_{16}|D\mathbb{W}|.
\end{equation}
Since
\begin{equation*}
\mathbb{W}_{1} = \mathfrak{g}_{1} + \sum_{t=1}^{n}\mathfrak{g}_{p_{t}}u_{11}\tilde{\eta}_{t}^{1} - \mathcal{B}\sum_{s=1}^{n-1}u_{11}\tilde{\eta}_{s}^{1}\left( u_{s} - u_{s}(x_{0}) \right).
\end{equation*}
If $\left|\sum_{t=1}^{n}\mathfrak{g}_{p_{t}}\tilde{\eta}_{t}^{1} -
\mathcal{B}\sum_{s=1}^{n}\tilde{\eta}_{s}^{1}\left(
u_{s}-u_{s}(x_{0}) \right)\right|$ is less than $\tilde{\beta}$ and
$|\mathfrak{g}_{1}|$ is less than $b_{17}$, then
 \begin{eqnarray*}
|\mathbb{W}_{1}|\leq\tilde{\beta}\lambda_{1} + b_{17}.
 \end{eqnarray*}
Besides, $\forall s\in\{1,\cdots,n-1\}$,
$|\tilde{\eta}_{s}^{1}|\leq(\delta_{\varepsilon})^{1/2}$. Choosing
$\varepsilon=\varepsilon\left(n,\rho,\mathcal{B}\right)$
sufficiently small such that
\begin{equation}\label{f2}
\mathcal{B}(\delta_{\varepsilon})^{1/2} \leq 1,
\end{equation}
and then, with such a choice of $\varepsilon$ we get: $\forall
s\in\{1,\cdots,n-1\}$, $|\mathcal{B}\tilde{\eta}_{s}^{1}|\leq 1$.
Let us then take $\tilde{\beta}$ as
\begin{equation}\label{f3}
\tilde{\beta} = \sup\limits_{D_{r,\rho}}\left( \sum_{t=1}^{n}|\mathfrak{g}_{p_{t}}| + 2(n-1) \right).
\end{equation}
Hence,
\begin{equation}\label{f4}
\begin{aligned}
-|\mathbb{W}_{1}|\widetilde{\Gamma}(k,l,1) &\leq
-\widetilde{\Gamma}(k,l,1)\tilde{\beta}\lambda_{1} - b_{17}\widetilde{\Gamma}(k,l,1)\\
&\leq
-\widetilde{\Gamma}(k,l,1)\tilde{\beta}\lambda_{1} + b_{17}\left( ( \frac{\sigma_{k}}{\sigma_{l}} )_{\lambda_{1}}\cdot\lambda_{1} - \psi \right).
\end{aligned}
\end{equation}
Let us find a bound on $-\widetilde{\Gamma}(k,l,1)\cdot\lambda_{1}$:
\begin{equation*}
\begin{aligned}
\sum_{\alpha=2}^{n}\left( \frac{\sigma_{k}}{\sigma_{l}} \right)_{\lambda_{\alpha}}\cdot\lambda_{\alpha}^{2}
&=
\sum_{\alpha=2}^{n}\left( \psi - \widetilde{\Gamma}(k,l,\alpha) \right)\lambda_{\alpha}\\
&=
\psi\sigma_{1}(\lambda|1) - \sum_{\alpha=2}^{n}\widetilde{\Gamma}(k,l,\alpha)\lambda_{\alpha}\\
&=
\psi\sigma_{1}(\lambda|1) - \sum_{\alpha=1}^{n}\widetilde{\Gamma}(k,l,\alpha)\lambda_{\alpha} + \widetilde{\Gamma}(k,l,1)\lambda_{1}\\
&=
\psi\sigma_{1}(\lambda|1) + (k-l)\frac{\sigma_{k}}{\sigma_{l}} - \frac{\sigma_{k}}{\sigma_{l}}\sigma_{1} + \widetilde{\Gamma}(k,l,1)\lambda_{1}.
\end{aligned}
\end{equation*}
So, we have

\begin{equation}\label{f5}
\begin{aligned}
-\widetilde{\Gamma}(k,l,1)\lambda_{1} &=
\psi\sigma_{1}(\lambda|1) + (k-l)\frac{\sigma_{k}}{\sigma_{l}} - \frac{\sigma_{k}}{\sigma_{l}}\sigma_{1} - \sum_{\alpha=2}^{n}\left( \frac{\sigma_{k}}{\sigma_{l}} \right)_{\lambda_{\alpha}}\cdot\lambda_{\alpha}^{2}\\
&\leq
-\psi\lambda_{1} + (k-l)\frac{\sigma_{k}}{\sigma_{l}} + \sum_{\alpha=2}^{n}\left( \frac{\sigma_{k}}{\sigma_{l}} \right)_{\lambda_{\alpha}}\cdot\lambda_{\alpha}^{2}\\
&\leq
(k-l)\frac{\sigma_{k}}{\sigma_{l}} + \sum_{\alpha=2}^{n}\left( \frac{\sigma_{k}}{\sigma_{l}} \right)_{\lambda_{\alpha}}\cdot\lambda_{\alpha}^{2}.
\end{aligned}
\end{equation}
Last, we estimate $\left( \frac{\sigma_{k}}{\sigma_{l}} \right)_{\lambda_{1}}\cdot\lambda_{1}$ as follows:

\begin{equation}\label{f6}
\begin{aligned}
\left( \frac{\sigma_{k}}{\sigma_{l}} \right)_{\lambda_{1}}\cdot\lambda_{1} &=
(k-l)\frac{\sigma_{k}}{\sigma_{l}} - \sum_{\alpha=2}^{n}\left( \frac{\sigma_{k}}{\sigma_{l}} \right)_{\lambda_{\alpha}}\cdot\lambda_{\alpha}\\
&\leq
(k-l)\frac{\sigma_{k}}{\sigma_{l}} + \frac{1}{4\gamma}\sum_{\alpha=2}^{n}\left( \frac{\sigma_{k}}{\sigma_{l}} \right)_{\lambda_{\alpha}}\cdot\lambda_{\alpha} + \gamma\sum_{\alpha=2}^{n}\left( \frac{\sigma_{k}}{\sigma_{l}} \right)_{\lambda_{\alpha}}\cdot\lambda_{\alpha}^{2}\\
&\leq
(k-l)\frac{\sigma_{k}}{\sigma_{l}} + \frac{1}{4\gamma}\frac{(n-k+1)\sigma_{k-1}\sigma_{l} - (n-l+1)\sigma_{k}\sigma_{l-1}}{\sigma_{l}^{2}} \\
&\quad
+ \gamma\sum_{\alpha=2}^{n}\left( \frac{\sigma_{k}}{\sigma_{l}} \right)_{\lambda_{\alpha}}\cdot\lambda_{\alpha}^{2}\\
&\leq
(k-l)\frac{\sigma_{k}}{\sigma_{l}} + \frac{n-k+1}{4\gamma}\frac{\sigma_{k-1}}{\sigma_{l}} + \gamma\sum_{\alpha=2}^{n}\left( \frac{\sigma_{k}}{\sigma_{l}} \right)_{\lambda_{\alpha}}\cdot\lambda_{\alpha}^{2}\\
&\leq
b_{18}\left( 1+\frac{\sigma_{k-1}}{\sigma_{l}} \right) + \gamma\sum_{\alpha=2}^{n}\left( \frac{\sigma_{k}}{\sigma_{l}} \right)_{\lambda_{\alpha}}\cdot\lambda_{\alpha}^{2},
\end{aligned}
\end{equation}
where the positive constant $b_{18}$ depends on $\psi$, $k$, $l$, $\gamma$, and $\gamma$ is to be specified. With $\gamma$ sufficiently small, inequalities \eqref{f1}, \eqref{f4}, \eqref{f5} and \eqref{f6} then give
$$|Q| \leq b_{19}\left( 1+|D\mathbb{W}| + \frac{\sigma_{k-1}}{\sigma_{l}} \right) + b_{20}\sum_{\alpha=2}^{n}\left( \frac{\sigma_{k}}{\sigma_{l}} \right)_{\lambda_{\alpha}}\cdot\lambda_{\alpha}^{2},$$
where the positive constant $b_{19}$ depends on $\tilde{\beta}$, $k$, $l$, $\psi$, $b_{16}$, $b_{17}$, $b_{18}$, the positive constant $b_{20}$ depends on $\tilde{\beta}$, $b_{17}$. Recalling the inequality

\begin{equation*}
\begin{aligned}
&\quad
\sum_{\alpha=1}^{n}\left( \frac{\sigma_{k}}{\sigma_{l}} \right)_{\lambda_{\alpha}}\cdot\mathbb{W}_{\alpha\alpha}\\
&\leq
b_{3}\left( 1 + |D\mathbb{W}| + \frac{\sigma_{k-1}}{\sigma_{l}} + \sum_{\alpha=1}^{n}( \frac{\sigma_{k}}{\sigma_{l}} )_{\lambda_{\alpha}}|\lambda_{\alpha}| \right) - b_{4}\mathcal{B}\delta_{\varepsilon^{'}}\sum_{\alpha=2}^{n}\left( \frac{\sigma_{k}}{\sigma_{l}} \right)_{\lambda_{\alpha}}\cdot\lambda_{\alpha}^{2}\\
&\quad +\sum_{\alpha=1}^{n}\left( \frac{\sigma_{k}}{\sigma_{l}}
\right)_{\lambda_{\alpha}}\left( b_{5}|\mathbb{W}_{\alpha}| +
b_{6}|\lambda_{\alpha}||\mathbb{W}_{\alpha}|  \right),
\end{aligned}
\end{equation*}
with estimates \eqref{f6},
\begin{equation*}
b_{3}\sum_{\alpha=1}^{n}\left( \frac{\sigma_{k}}{\sigma_{l}}
\right)_{\lambda_{\alpha}}\cdot|\lambda_{\alpha}|\leq b_{21}\left(
1+\frac{\sigma_{k-1}}{\sigma_{l}} \right) +
b_{22}\sum_{\alpha=2}^{n}\left( \frac{\sigma_{k}}{\sigma_{l}}
\right)_{\lambda_{\alpha}}\cdot\lambda_{\alpha}^{2}
\end{equation*}
and
\begin{equation*}
\begin{aligned}
&\quad
\sum_{\alpha=1}^{n}\left( \frac{\sigma_{k}}{\sigma_{l}} \right)_{\lambda_{\alpha}}\left(b_{5}|\mathbb{W}_{\alpha}| + b_{6}|\lambda_{\alpha}||\mathbb{W}_{\alpha}|  \right)\\
&\leq
b_{14}|Q| + b_{15}\sum_{\alpha=1}^{n}\left( \frac{\sigma_{k}}{\sigma_{l}} \right)_{\lambda_{\alpha}}\mathbb{W}_{\alpha}^{2} + \sum_{\alpha=2}^{n}\left( \frac{\sigma_{k}}{\sigma_{l}} \right)_{\lambda_{\alpha}}\cdot\lambda_{\alpha}^{2}\\
&\leq
b_{23}\left( 1+|D\mathbb{W}| + \frac{\sigma_{k-1}}{\sigma_{l}} \right) + b_{24}\sum_{\alpha=2}^{n}\left( \frac{\sigma_{k}}{\sigma_{l}} \right)_{\lambda_{\alpha}}\cdot\lambda_{\alpha}^{2} + b_{15}\sum_{\alpha=1}^{n}\left( \frac{\sigma_{k}}{\sigma_{l}} \right)_{\lambda_{\alpha}}\mathbb{W}_{\alpha}^{2},
\end{aligned}
\end{equation*}
where the positive constant $b_{21}$ depends on $b_{3}$, $b_{18}$,
the positive constant $b_{22}$ depends on $b_{3}$, $\gamma$, the
positive constant $b_{23}$ depends on $b_{14}$, $b_{19}$, and the
positive constant $b_{24}$ depends on $b_{20}$. Let $\mathcal{B} =
\mathcal{B}\left( n,k,l,\rho,\Vert\mathfrak{g}\Vert_{1,D_{r,\rho}},
\mathcal{B}_{0} \right)$ be large enough, independent of
$\varepsilon$, and compatible with \eqref{f2}. Then we have
\begin{equation*}
\begin{aligned}
&\quad
\sum_{\alpha=1}^{n}\left( \frac{\sigma_{k}}{\sigma_{l}} \right)_{\lambda_{\alpha}}\cdot\mathbb{W}_{\alpha\alpha}\\
&\leq
b_{25}\left( 1 + |D\mathbb{W}| + \frac{\sigma_{k-1}}{\sigma_{l}} +\sum_{\alpha=1}^{n}\left( \frac{\sigma_{k}}{\sigma_{l}} \right)_{\lambda_{\alpha}}\mathbb{W}_{\alpha}^{2} \right) \\
&\quad
+ b_{26}\sum_{\alpha=2}^{n}\left( \frac{\sigma_{k}}{\sigma_{l}} \right)_{\lambda_{\alpha}}\cdot\lambda_{\alpha}^{2} - b_{4}\mathcal{B}\delta_{\varepsilon^{'}}\sum_{\alpha=2}^{n}\left( \frac{\sigma_{k}}{\sigma_{l}} \right)_{\lambda_{\alpha}}\cdot\lambda_{\alpha}^{2},
\end{aligned}
\end{equation*}
where the positive constant $b_{25}$ depends on $b_{3}$, $b_{15}$,
$b_{21}$, $b_{23}$, and the positive constant $b_{26}$ depends on
$b_{22}$, $b_{24}$. Now we can give the expected bound on
$\sum_{\alpha=1}^{n}\left( \frac{\sigma_{k}}{\sigma_{l}}
\right)_{\lambda_{\alpha}}\mathbb{W}_{\alpha\alpha}$ with
$\mathcal{B}$ large enough. This completes the proof of Lemma
\ref{lemma 6.1}.
\end{proof}

Setting $\widetilde{\mathbb{W}} = \exp\left(
-\mathcal{B}_{1}\mathfrak{g}(x_{0}, Du(x_{0})) \right) - \exp\left(
-\mathcal{B}_{1}\mathbb{W} \right) - b|x-x_{0}|^{2}$, from Lemma
\ref{lemma 6.1} and an appropriate $b$ as in \cite{nmi}, we can get
the following crucial inequality.

\begin{lemma}\label{lemma 6.2}
For $b = b\left(
n,\mathcal{B}_{1},\Vert\mathfrak{g}\Vert_{0,D_{r,\rho}},\mathcal{B}
\right)$ sufficiently large, $\widetilde{\mathbb{W}}$ satisfies
$$\sum_{i,j}\frac{\partial}{\partial q_{ij}}\left( \frac{\sigma_{k}}{\sigma_{l}}(u)\right)\widetilde{\mathbb{W}}_{ij} \leq \mathcal{B}_{2}\left( 1 + |D\widetilde{\mathbb{W}}| \right)$$
on $M^{n}\cap B_{r}(x_{0})$, where $\mathcal{B}_{2} =
\mathcal{B}_{2}\left( \mathcal{B}_{1}, b, r,
\Vert\mathfrak{g}\Vert_{0,D_{r,\rho}}, \mathcal{B} \right)$.
\end{lemma}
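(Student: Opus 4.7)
The plan is to exploit the exponential-plus-quadratic structure of $\widetilde{\mathbb{W}}$ to turn the inhomogeneous inequality of Lemma \ref{lemma 6.1} into the linear one asserted by Lemma \ref{lemma 6.2}. The exponential factor $e^{-\mathcal{B}_1\mathbb{W}}$ is designed to cancel the quadratic gradient term $F^{ij}\mathbb{W}_i\mathbb{W}_j$ on the right-hand side of Lemma \ref{lemma 6.1}, while the quadratic correction $-b|x-x_0|^2$ produces a negative multiple of $\mathrm{tr}\,F^{ij}$ that will absorb the $\sigma_{k-1}/\sigma_l$ term.

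First I would compute the derivatives. Setting $\Phi:=e^{-\mathcal{B}_1\mathbb{W}}$, one has $\widetilde{\mathbb{W}}_i=\mathcal{B}_1\Phi\,\mathbb{W}_i-2b(x-x_0)_i$ and $\widetilde{\mathbb{W}}_{ij}=\mathcal{B}_1\Phi\bigl(\mathbb{W}_{ij}-\mathcal{B}_1\mathbb{W}_i\mathbb{W}_j\bigr)-2b\,\delta_{ij}$. Writing $F^{ij}:=\partial(\sigma_k/\sigma_l)(u)/\partial q_{ij}$ and contracting, this gives
\begin{equation*}
F^{ij}\widetilde{\mathbb{W}}_{ij}=\mathcal{B}_1\Phi\bigl(F^{ij}\mathbb{W}_{ij}-\mathcal{B}_1 F^{ij}\mathbb{W}_i\mathbb{W}_j\bigr)-2b\,\mathrm{tr}\,F^{ij}.
\end{equation*}
Substituting the inequality supplied by Lemma \ref{lemma 6.1} into $F^{ij}\mathbb{W}_{ij}$, the quadratic gradient contribution cancels exactly, and one is left with
\begin{equation*}
F^{ij}\widetilde{\mathbb{W}}_{ij}\leq \mathcal{B}_1^{2}\Phi\Bigl(1+|D\mathbb{W}|+\tfrac{\sigma_{k-1}}{\sigma_l}\Bigr)-2b\,\mathrm{tr}\,F^{ij}.
\end{equation*}

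Next I would absorb the $\sigma_{k-1}/\sigma_l$ term. Since $\lambda(A)\in\Gamma_k$, the same Newton--MacLaurin reasoning used in Proposition \ref{elliptic} shows $\sigma_{k-1}/\sigma_l\leq C(n,k,l)\,\mathrm{tr}\,F^{ij}$, where the lower bound $\mathrm{tr}\,F^{ij}\geq(C_n^k/C_n^l)^{1/(k-l)}$ from Section \ref{S5} guarantees the trace does not degenerate. On $\overline{M^n}\cap\overline{B_r(x_0)}$, $\mathbb{W}$ is bounded in terms of $\|\mathfrak{g}\|_{0,D_{r,\rho}}$ and $\mathcal{B}$, so $\Phi$ lies between two positive constants. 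Choosing $b$ large enough so that $\mathcal{B}_1^{2}\Phi_{\max}C(n,k,l)\leq b$, the term $-2b\,\mathrm{tr}\,F^{ij}$ both absorbs $\mathcal{B}_1^{2}\Phi\,\sigma_{k-1}/\sigma_l$ and still leaves a nonpositive residue, hence can be dropped. Inverting the formula for $\widetilde{\mathbb{W}}_i$ to write $|D\mathbb{W}|\leq(\mathcal{B}_1\Phi)^{-1}\bigl(|D\widetilde{\mathbb{W}}|+2br\bigr)$ produces the stated bound $F^{ij}\widetilde{\mathbb{W}}_{ij}\leq\mathcal{B}_2(1+|D\widetilde{\mathbb{W}}|)$ with $\mathcal{B}_2=\mathcal{B}_2(\mathcal{B}_1,b,r,\|\mathfrak{g}\|_{0,D_{r,\rho}},\mathcal{B})$.

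The main obstacle is the calibration of $b$: it must be chosen large enough, depending on $\mathcal{B}_1$ (already fixed by Lemma \ref{lemma 6.1}) and the $C^0$-norm of $\mathfrak{g}$, to dominate the $\sigma_{k-1}/\sigma_l$ contribution via $\mathrm{tr}\,F^{ij}$, yet the resulting $\mathcal{B}_2$ must remain independent of the unknown curvature so that the subsequent barrier construction at boundary points still works. The structural miracle that makes everything line up is the choice of $-e^{-\mathcal{B}_1\mathbb{W}}$: its first derivative is $\mathcal{B}_1\Phi\mathbb{W}_i$ (feeding the absorbing quadratic gradient term) and its second derivative produces the exact $-\mathcal{B}_1^2\Phi\,\mathbb{W}_i\mathbb{W}_j$ needed to neutralise the bad term of Lemma \ref{lemma 6.1}.
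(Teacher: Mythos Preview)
Your argument is correct and is exactly the standard exponential-barrier trick the paper is invoking when it cites \cite{nmi} without further details: the factor $e^{-\mathcal{B}_1\mathbb{W}}$ cancels the quadratic gradient term from Lemma~\ref{lemma 6.1}, and the $-b|x-x_0|^2$ correction absorbs $\sigma_{k-1}/\sigma_l$ via the inequality $\sigma_{k-1}/\sigma_l\leq \frac{k}{(n-k+1)(k-l)}\,\mathrm{tr}\,F^{ij}$. One minor slip: the lower bound $\mathrm{tr}\,F^{ij}\geq (C_n^k/C_n^l)^{1/(k-l)}$ you cite from Section~\ref{S5} is for $\partial\bigl[(\sigma_k/\sigma_l)^{1/(k-l)}\bigr]$, not for $\partial(\sigma_k/\sigma_l)$, which is the operator appearing in Lemma~\ref{lemma 6.2}; however your proof never actually uses that lower bound, so this does not affect the argument.
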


\begin{lemma}\label{lemma 6.3}
Let $\tilde{\psi}\in C^{2}\left( \partial M^{n}\cap
\bar{B}_{r}(x_{0}) \right)$ and $a_{0}\in\mathbb{R}$. The function
$\tilde{v} = -a_{0}|x-x_{0}|^{2} - \tilde{h}(d) +
\tilde{\psi}(x^{'})$, with $\tilde{h}(d) = \mathcal{B}_{3}\left( 1 -
e^{-\mathcal{B}_{4}d} \right)$, satisfies: for any positive function
$p$, $Dp\in\mathbb{R}^{n}$, s.t. $\frac{|Dp|}{p}\leq\rho<1$,
$\forall i\in\{1,\cdots,k\}$,
$\mathcal{F}_{i}(Dp,D^{2}\tilde{v})>0$, and
\begin{equation}\label{f7}
\sum_{i,j}\frac{\partial}{\partial
q_{ij}}\left(\frac{\mathcal{F}_{k}}{\mathcal{F}_{l}}(u)\right)\tilde{v}_{ij}\geq
\mathcal{B}_{2}(1+|D\tilde{v}|)  \qquad on~~M^{n}\cap B_{r}(x_{0})
\end{equation}
for suitable parameters $\mathcal{B}_{3}$, $\mathcal{B}_{4}$, depending only on $n$, $k$, $l$, $M^{n}$, $\psi$, $\rho$, $a_{0}$, $\Vert\tilde{\psi}\Vert_{2,\partial M^{n}\cap \bar{B}_{r}(x_{0})}$ and $\mathcal{B}_{2}$.
\end{lemma}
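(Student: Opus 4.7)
The plan is a standard barrier construction in the spirit of Caffarelli--Nirenberg--Spruck / Ivochkina / Bayard: the exponential $-\tilde h(d)$ supplies a huge eigenvalue in the normal direction, the strict convexity of $M^n$ supplies positive tangential eigenvalues after differentiating the signed distance $d$, and the remaining terms are harmless lower-order perturbations. First I would choose, at a fixed point $x\in M^n\cap B_r(x_0)$, an orthonormal frame $\{e_1,\dots,e_{n-1},e_n\}$ with $e_n=Dd$ (so that $d_{nn}=0$ and $d_{n\alpha}=0$ because $|Dd|^2\equiv 1$), and in which the tangential block $(d_{\alpha\beta})_{1\le\alpha,\beta\le n-1}$ is diagonal. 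Strict convexity of $M^n$ gives $-d_{\alpha\beta}\ge c_0\delta_{\alpha\beta}$ for some $c_0=c_0(M^n)>0$ in this block, uniformly for $x$ close to $\partial M^n$.

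Next I would compute
\begin{equation*}
\tilde v_{ij}=-2a_0\delta_{ij}-\tilde h'(d)\,d_{ij}-\tilde h''(d)\,d_id_j+\tilde\psi_{ij},
\end{equation*}
with $\tilde h'(d)=\mathcal{B}_3\mathcal{B}_4 e^{-\mathcal{B}_4 d}>0$ and $-\tilde h''(d)=\mathcal{B}_3\mathcal{B}_4^2e^{-\mathcal{B}_4 d}>0$. In the frame above this matrix is block-diagonal up to the small cross term $\tilde\psi_{n\alpha}$: the normal entry equals $-2a_0+\mathcal{B}_3\mathcal{B}_4^2e^{-\mathcal{B}_4 d}+\tilde\psi_{nn}$, while the tangential block equals $-2a_0\delta_{\alpha\beta}+\mathcal{B}_3\mathcal{B}_4 e^{-\mathcal{B}_4 d}(-d_{\alpha\beta})+\tilde\psi_{\alpha\beta}$. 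Choosing first $\mathcal{B}_4=\mathcal{B}_4(n,M^n,\|\tilde\psi\|_{C^2},a_0,c_0)$ large enough that $\mathcal{B}_4 c_0\ge 1+2|a_0|+\|\tilde\psi\|_{C^2}$, and then $\mathcal{B}_3$ large enough (relative to $\mathcal{B}_4$ and $r$), I would get uniform lower bounds on both the normal and the tangential eigenvalues of $D^2\tilde v$. In particular all $n$ eigenvalues are large and positive, so $\lambda(D^2\tilde v)\in\Gamma_k$ and $\mathcal{F}_i(Dp,D^2\tilde v)>0$ for every $i\in\{1,\dots,k\}$, regardless of the (only positivity) hypothesis on $p$.

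The main step is the lower bound on $\sum_{i,j}F^{ij}\tilde v_{ij}$, where $F^{ij}:=\partial(\sigma_k/\sigma_l)/\partial q_{ij}$ evaluated at $\lambda(D^2 u)\in\Gamma_k$. Since $F^{ij}$ is positive definite on $\Gamma_k$ by Proposition 2.2, and since $D^2\tilde v$ is, by the previous paragraph, bounded below by a large positive multiple of the identity in both the normal and tangential directions, I would use
\begin{equation*}
F^{ij}\tilde v_{ij}\ge \bigl(\min_i F^{ii}\bigr)\bigl(\text{smallest eigenvalue of }D^2\tilde v\bigr)\cdot\mathrm{tr}(I),
\end{equation*}
or more carefully pair the huge normal eigenvalue against $F^{nn}$ and the tangential eigenvalues against the remaining diagonal entries. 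Combined with the generalised Newton--Maclaurin / Garding lower bound $\mathrm{tr}F^{ij}\ge(C_n^k/C_n^l)^{1/(k-l)}>0$ already used in Sections 4--5, this produces a lower bound of order $\mathcal{B}_3\mathcal{B}_4\min\{\mathcal{B}_4,c_0\}\cdot e^{-\mathcal{B}_4 d}$. Meanwhile $|D\tilde v|\le 2|a_0|\cdot r+\mathcal{B}_3\mathcal{B}_4 e^{-\mathcal{B}_4 d}+\|\tilde\psi\|_{C^1}$, so a final choice of $\mathcal{B}_3$ depending on $\mathcal{B}_2$ (and on the constants listed in the statement) absorbs $\mathcal{B}_2(1+|D\tilde v|)$ into $F^{ij}\tilde v_{ij}$, yielding \eqref{f7}.

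The main obstacle is the interaction between the cross terms $\tilde\psi_{n\alpha}$ (which break the exact block-diagonal structure) and the need to keep the smallest eigenvalue of $D^2\tilde v$ uniformly large; these terms are $O(\|\tilde\psi\|_{C^2})$ while the block-diagonal entries grow like $\mathcal{B}_3\mathcal{B}_4 e^{-\mathcal{B}_4 d}$ or $\mathcal{B}_3\mathcal{B}_4^2 e^{-\mathcal{B}_4 d}$, so a standard Cauchy--Schwarz argument handles the perturbation as soon as $\mathcal{B}_4$ is large. A subtler point is that the required dominance of $F^{ij}\tilde v_{ij}$ over $\mathcal{B}_2|D\tilde v|$ must be uniform in $x$, including as $d(x)\to 0$; but since both quantities carry the same $e^{-\mathcal{B}_4 d}$ factor this is achieved by taking $\mathcal{B}_3\gg\mathcal{B}_2/\min\{\mathcal{B}_4,c_0\}$, and one verifies that the final constants $\mathcal{B}_3,\mathcal{B}_4$ depend only on the data listed in the statement.
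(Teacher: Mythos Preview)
Your setup---the adapted frame along $Dd$, the computation of $D^2\tilde v$, and the conclusion that $\lambda(D^2\tilde v)\in\Gamma_k$ once $\mathcal B_3,\mathcal B_4$ are large---is correct and matches the paper. The gap is in the main inequality. Bounding $F^{ij}\tilde v_{ij}$ from below by $(\text{smallest eigenvalue of }D^2\tilde v)\cdot\mathrm{tr}\,F^{ij}$ yields at best a quantity of order $c_*\,c_0\,\tilde h'(d)$, where $c_*$ is the G{\aa}rding trace bound (depending only on $n,k,l,\inf\psi$) and $c_0$ the boundary convexity constant. Since $|D\tilde v|$ is itself of order $\tilde h'(d)$, the ratio $F^{ij}\tilde v_{ij}/(1+|D\tilde v|)$ you obtain is of order $c_*c_0$, a constant that does \emph{not} grow with $\mathcal B_3$ or $\mathcal B_4$. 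Consequently you cannot force this ratio above an arbitrary prescribed $\mathcal B_2$ by enlarging $\mathcal B_3,\mathcal B_4$, contrary to your final claim. The alleged extra factor $\min\{\mathcal B_4,c_0\}$ is not produced by either of the two pairings you describe: in the diagonal pairing the large normal eigenvalue $\mathcal B_4\tilde h'$ sits against an entry $F^{nn}$ for which you have no independent lower bound, and in the crude trace pairing it is the \emph{smallest} (tangential) eigenvalue $c_0\tilde h'$ that controls everything.

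The paper avoids this by using the concavity and $1$-homogeneity of $G:=(\mathcal F_k/\mathcal F_l)^{1/(k-l)}$ in $q$: once $D^2\tilde v$ lies in the admissible cone one has $G^{ij}(u)\,\tilde v_{ij}\ge G(Du,D^2\tilde v)$, whence
\[
\frac{1}{k-l}\sum_{i,j}\frac{\partial}{\partial q_{ij}}\!\left(\frac{\mathcal F_k}{\mathcal F_l}(u)\right)\tilde v_{ij}
\;\ge\;
\left(\frac{\mathcal F_k}{\mathcal F_l}(u)\right)^{1-\frac{1}{k-l}}
\left(\frac{\mathcal F_k}{\mathcal F_l}(Du,D^2\tilde v)\right)^{\frac{1}{k-l}}.
\]
This reduces the linear differential inequality to a \emph{nonlinear} lower bound on $(\mathcal F_k/\mathcal F_l)(Dp,D^2\tilde v)$, which the paper then establishes via the normal--tangential expansion \eqref{f13} and Lemma~\ref{lemma 6.4}. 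Evaluating the full operator at $D^2\tilde v$ captures the huge normal entry $-\tilde h''=\mathcal B_4\tilde h'$ in a way the trace bound cannot; for example when $l=0$ one finds $(\mathcal F_k)^{1/k}(Dp,D^2\tilde v)\gtrsim \mathcal B_4^{1/k}\,\tilde h'$, and it is precisely this additional power of $\mathcal B_4$ that lets one dominate the given $\mathcal B_2$. Your argument is missing this concavity step and the accompanying use of Lemma~\ref{lemma 6.4}.
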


Here $d$ denotes the distance function to the boundary of $M^{n}$,
$x=(x^{'},x_{n})$ in a given adapted basis at the boundary-point
$x_{0}$, and $\mathcal{F}_{k}(Dp,q) = F_{k}\left( g^{ij}(Dp)q
\right)$, $g^{ij}(Dp) = \frac{1}{p^{2}}\left( \sigma^{ij} +
\frac{p^{i}p^{j}}{p^{2}-|Dp|^{2}} \right)$, $F_{k}(q)$ denotes the
$k^{th}$ symmetric function of the eigenvalues of $q$. Particularly,
$\mathcal{F}_{k}(u) = \sigma_{k}(u)$.

To prove Lemma \ref{lemma 6.3}, we will need the following fact.

\begin{lemma}\label{lemma 6.4}
If $q$ is a symmetric non-negative matrix, then for any positive
function $p$, $Dp\in\mathbb{R}^{n}$, s.t.
$\frac{|Dp|}{p}\leq\rho<1$, $2\leq k\leq n$, $0\leq l\leq k-2$,
 $$\frac{\mathcal{F}_{k}}{\mathcal{F}_{l}}(Dp,q)\geq(1-\rho^{2})\left( \frac{1}{p^{2}} \right)^{k-l}\frac{F_{k}}{F_{l}}(q).$$
\end{lemma}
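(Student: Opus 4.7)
The plan is to reduce the claim to an elementary identity for principal minors of $q$ by exploiting the fact that $\tilde g(Dp):=p^{2}g(Dp)=I+\beta\,\hat p\hat p^{T}$ is a rank-one perturbation of the identity, where $\hat p=Dp/|Dp|$ and $\beta=|Dp|^{2}/(p^{2}-|Dp|^{2})$. Setting $\mu:=1+\beta=1/(1-|Dp|^{2}/p^{2})$, one has $1\le\mu\le 1/(1-\rho^{2})$. Since $F_{k}$ is homogeneous of degree $k$, one has $F_{k}(gq)=p^{-2k}F_{k}(\tilde g\, q)$, so the asserted inequality is equivalent to
$$\frac{F_{k}(\tilde g\, q)}{F_{l}(\tilde g\, q)}\ \ge\ (1-\rho^{2})\,\frac{F_{k}(q)}{F_{l}(q)}.$$
Since the eigenvalues of $\tilde g\, q$ coincide with those of the symmetric matrix $B:=\tilde g^{1/2}q\,\tilde g^{1/2}$, I will work with $B$ from now on.

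Next I would choose a $\sigma$-orthonormal frame at the point with $\hat p=e_{n}$; then $\tilde g^{1/2}=\mathrm{diag}(1,\dots,1,\sqrt\mu)$, so $B_{ij}=(\tilde g^{1/2})_{ii}\,q_{ij}\,(\tilde g^{1/2})_{jj}$. The heart of the argument is the resulting block computation: for any $k$-subset $S\subset\{1,\dots,n\}$, the $S$-principal minor of $B$ equals $\det q_{S}$ if $n\notin S$ and $\mu\det q_{S}$ if $n\in S$, because the last row and the last column of such a minor each contribute an extra factor $\sqrt\mu$ to the determinant. Using $\sigma_{k}(\lambda(M))=\sum_{|S|=k}\det M_{S}$ for symmetric $M$ and setting
$$\alpha_{k}:=\sum_{|S|=k,\ n\notin S}\det q_{S},\qquad \gamma_{k}:=\sum_{|T|=k-1,\ T\subset\{1,\dots,n-1\}}\det q_{T\cup\{n\}},$$
this yields the key identities
$$F_{k}(q)=\alpha_{k}+\gamma_{k},\qquad F_{k}(\tilde g\, q)=\alpha_{k}+\mu\gamma_{k},$$
together with the analogues for $l$. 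Since $q$ is symmetric non-negative, every principal submatrix of $q$ is PSD, and hence $\alpha_{k},\alpha_{l},\gamma_{k},\gamma_{l}\ge 0$.

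The inequality then reduces to the purely algebraic
$$(\alpha_{k}+\mu\gamma_{k})(\alpha_{l}+\gamma_{l})\ \ge\ (1-\rho^{2})(\alpha_{k}+\gamma_{k})(\alpha_{l}+\mu\gamma_{l}),$$
and a direct expansion of $\mathrm{LHS}-\mathrm{RHS}$ produces
$$\rho^{2}\alpha_{k}\alpha_{l}\ +\ [1-\mu(1-\rho^{2})]\alpha_{k}\gamma_{l}\ +\ [\mu-(1-\rho^{2})]\gamma_{k}\alpha_{l}\ +\ \mu\rho^{2}\gamma_{k}\gamma_{l},$$
in which every coefficient is non-negative, thanks to $\mu(1-\rho^{2})\le 1$ and $\mu\ge 1\ge 1-\rho^{2}$. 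The main subtlety is the block computation of the principal minors of $B$, which is what allows the single factor $\mu=\det\tilde g$ to be correctly distributed among the $\alpha$- and $\gamma$-terms; once that bookkeeping is in place the remaining manipulations are routine and I do not foresee any further obstacle.
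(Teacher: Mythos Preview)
Your argument is correct and follows essentially the same route as the paper: align a coordinate axis with $Dp$, split the principal $k\times k$ minors according to whether they contain that index (yielding the $\alpha_{k}+\mu\gamma_{k}$ decomposition), and use that all principal minors of the PSD matrix $q$ are non-negative. The only difference is cosmetic---the paper works directly with the non-symmetric product $g^{ij}q$ rather than symmetrizing, and at the final step it is slightly more economical: instead of expanding the cross-product it simply bounds $F_{k}(\tilde g\,q)=\alpha_{k}+\mu\gamma_{k}\ge\alpha_{k}+\gamma_{k}=F_{k}(q)$ and $F_{l}(\tilde g\,q)=\alpha_{l}+\mu\gamma_{l}\le\mu(\alpha_{l}+\gamma_{l})\le(1-\rho^{2})^{-1}F_{l}(q)$ separately and then divides.
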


\begin{proof} Set $2\leq k\leq n$, $0 \leq l\leq k-2$, $\forall (Dp,q)\in B(0,1)\times S_{n}(\mathbb{R})$, where $S_{n}(\mathbb{R})$ denotes $n\times n$ order real symmetric matrix,
\begin{equation}\label{f8}
\mathcal{F}_{k}(Dp,q) = F_{k}\left( g^{ij}(Dp)q\right).
\end{equation}
The expression \eqref{f8}, independent of the orthonormal basis of
$\mathbb{R}^{n}$, is chosen to express $p$ and $q$. Take an
orthonormal basis $\{e_{1}, \cdots, e_{n}\}$ with $e_{1}$ directed
along $Dp$. Then
\begin{equation*}
g^{ij}(Dp)q =
\begin{pmatrix}
\frac{1}{p^{2} - |Dp|^{2}} &  &  &  \\
 & 1/p^{2} &  &  \\
 &  & \ddots &  \\
 &  &  & 1/p^{2}
\end{pmatrix}q
=
\begin{pmatrix}
\frac{1}{p^{2} - |Dp|^{2}}q_{11} & \frac{1}{p^{2} - |Dp|^{2}}q_{12} & \cdots &  \frac{1}{p^{2} - |Dp|^{2}}q_{1n}\\
1/p^{2}\cdot q_{21} & 1/p^{2}\cdot q_{22} & \cdots & 1/p^{2}\cdot q_{2n}\\
\vdots & \vdots &  & \vdots\\
1/p^{2}\cdot q_{n1} & 1/p^{2}\cdot q_{n2} & \cdots & 1/p^{2}\cdot q_{nn}
\end{pmatrix}.
\end{equation*}
Thus
\begin{equation*}
\begin{aligned}
F_{k}\left( g^{ij}(Dp)q \right) & =  \frac{1}{p^{2} - |Dp|^{2}}\times\left(\frac{1}{p^{2}}\right)^{k-1}\sum\limits_{1<i_{1}<\cdots<i_{k}\leq n}
\left|\begin{matrix}
q_{11} & q_{1i_{2}} & \cdots & q_{1i_{k}}\\
q_{i_{2}1} & q_{i_{2}i_{2}} & \cdots & q_{i_{2}}q_{i_{k}}\\
\vdots & \vdots &  & \vdots\\
q_{i_{k}1} & q_{i_{k}i_{2}} & \cdots & q_{i_{k}i_{k}}
\end{matrix}\right|\\
&\quad +\left( \frac{1}{p^{2}}
\right)^{k}\sum\limits_{\substack{I=(i_{1}, \cdots,
i_{k})\\1<i_{1}<\cdots<i_{k}\leq n}}q_{II} \geq \left(
\frac{1}{p^{2}}  \right)^{k}F_{k}(q),
\end{aligned}
\end{equation*}
\begin{equation*}
\begin{aligned}
F_{l}\left( g^{ij}(Dp)q \right) & =  \frac{1}{p^{2} - |Dp|^{2}}\times\left(\frac{1}{p^{2}}\right)^{l-1}\sum\limits_{1<i_{1}<\cdots<i_{l}\leq n}
\left|\begin{matrix}
q_{11} & q_{1i_{2}} & \cdots & q_{1i_{l}}\\
q_{i_{2}1} & q_{i_{2}i_{2}} & \cdots & q_{i_{2}}q_{i_{l}}\\
\vdots & \vdots &  & \vdots\\
q_{i_{l}1} & q_{i_{l}i_{2}} & \cdots & q_{i_{l}i_{l}}
\end{matrix}\right|\\
&\quad
+\left( \frac{1}{p^{2}}  \right)^{l}\sum\limits_{\substack{J=(i_{1}, \cdots, i_{l})\\1<i_{1}<\cdots<i_{l}\leq n}}q_{JJ} \leq \frac{1}{1-\rho^{2}}\left( \frac{1}{p^{2}}  \right)^{l}F_{l}(q).
\end{aligned}
\end{equation*}
Using $\frac{|Dp|}{p}\leq\rho<1$, since the determinants in the
first sum are non-negative when $q$ is non-negative, Lemma
\ref{lemma 6.4} follows.
\end{proof}

\begin{proof} [Proof of Lemma \ref{lemma 6.3}] We first show that for any positive function
$p$, $Dp\in\mathbb{R}^{n}$, s.t. $\frac{|Dp|}{p}\leq\rho<1$, $0\leq
l\leq k-2$, $2\leq k\leq n$,
\begin{equation}\label{f9}
\frac{\mathcal{F}_{k}}{\mathcal{F}_{l}}(Dp, D^{2}\tilde{v}) \geq \mathcal{B}_{5}(1+|D\tilde{v}|)^{k-l},
\end{equation}
where $\mathcal{B}_{5}$ is as large as desired, and if
$\mathcal{B}_{3}$, $\mathcal{B}_{4}$ are suitable parameters. Let
$\{e_{1},\cdots,e_{n}\}$ be an orthonormal basis of
$\mathscr{H}^{n}(1)$, $q\in S_{n}(\mathbb{R})$. Denote by $Dp^{'}$
the component of $Dp$ on $\{e_{1},\cdots,e_{n-1}\}$, by $q^{'}\in
S_{n-1}(\mathbb{R})$ the restriction of $q$ on
$\{e_{1},\cdots,e_{n-1}\}$, and by $q_{(n,n)}$ the $n\times n$
matrix deduced from $q$ by setting its $(n,n)$ coefficient to $0$.
It's easy to show that
\begin{equation}\label{f10}
\mathcal{F}_{k}(Dp,q) = \frac{1}{p^{2}}
\frac{p^{2}-|Dp^{'}|^{2}}{p^{2}-|Dp|^{2}}q_{nn}\mathcal{F}_{k-1}\left(Dp^{'},q^{'}\right)
+ O\left( |q_{(n,n)}|^{k} \right),
\end{equation}
\begin{equation}\label{f11}
\mathcal{F}_{l}(Dp,q) = \frac{1}{p^{2}}
\frac{p^{2}-|Dp^{'}|^{2}}{p^{2}-|Dp|^{2}}q_{nn}\mathcal{F}_{l-1}\left(Dp^{'},q^{'}\right)
+ O\left( |q_{(n,n)}|^{l} \right),
\end{equation}
where $O\left( |q_{(n,n)}|^{k} \right)$, $O\left( |q_{(n,n)}|^{l}
\right)$ denote  quantities estimated by $C|q_{(n,n)}|^{k}$,
$C|q_{(n,n)}|^{l}$ with $C$ depending on $Dp$, $p$. So, we have
\begin{equation}\label{f12}
\frac{\mathcal{F}_{k}}{\mathcal{F}_{l}}(Dp,q) = \frac{\left( p^{2} - |Dp^{'}|^{2} \right)q_{nn}\mathcal{F}_{k-1}\left( Dp^{'}, q^{'} \right) + O\left( |q_{(n,n)}|^{k} \right)}{\left( p^{2} - |Dp^{'}|^{2} \right)q_{nn}\mathcal{F}_{l-1}\left( Dp^{'},q^{'} \right) + O\left( |q_{(n,n)}|^{l} \right)}.
\end{equation}
Let $x\in M^{n}\cap B_{r}(x_{0})$, and $\{e_{1},\cdots,e_{n}\}$ be
an adapted basis at the boundary-point $y$ minimizing the distance
between $x$ and $\partial M^{n}$. We get
\begin{equation}\label{f13}
\frac{\mathcal{F}_{k}}{\mathcal{F}_{l}}(Dp,D^{2}\tilde{v}) = \frac{\left( p^{2} - |Dp^{'}|^{2} \right)\tilde{v}_{nn}\mathcal{F}_{k-1}\left( Dp^{'}, D^{2}\tilde{v}^{'} \right) + O\left( |D^{2}\tilde{v}_{(n,n)}|^{k} \right)}{\left( p^{2} - |Dp^{'}|^{2} \right)\tilde{v}_{nn}\mathcal{F}_{l-1}\left( Dp^{'},D^{2}\tilde{v}^{'} \right) + O\left( |D^{2}\tilde{v}_{(n,n)}|^{l} \right)},
\end{equation}
where

\begin{equation*}
\begin{aligned}
D^{2}\tilde{v} &=
\tilde{h}^{'}\mathrm{diag}\left( -\frac{2a_{0}}{\tilde{h}^{'}}+\frac{\kappa_{1}}{1-\kappa_{1}d}, \cdots, -\frac{2a_{0}}{\tilde{h}^{'}} + \frac{\kappa_{n-1}}{1-\kappa_{n-1}d}, -\frac{2a_{0}}{\tilde{h}^{'}} - \frac{\tilde{h}^{''}}{\tilde{h}^{'}} \right)\\
&\quad
+
\begin{pmatrix}
\left( \frac{\partial^{2}\tilde{\psi}}{\partial x_{i}\partial x_{j}} \right)_{1\leq i,j\leq n-1} & 0\\
0 & 0
\end{pmatrix}.
\end{aligned}
\end{equation*}
Let us bound $\frac{\mathcal{F}_{k-1}}{\mathcal{F}_{l-1}}\left(
Dp^{'}, D^{2}\tilde{v}^{'} \right)$ from below: from the definition
of $\mathcal{F}_{k-1}$, $\mathcal{F}_{l-1}$, one knows that
$\left(\frac{1}{\tilde{h}^{'}}
\right)^{k-l}\frac{\mathcal{F}_{k-1}}{\mathcal{F}_{l-1}}(Dp^{'},
D^{2}\tilde{v}^{'} )$ tends to
\begin{equation}\label{f14}
\frac{F_{k-1}}{F_{l-1}}\left( g^{ij}(
Dp^{'})\mathrm{diag}(\kappa_{1},\cdots,\kappa_{n-1}) \right),
\end{equation}
where $d$ tends to $0$, $\tilde{h}^{'}$ tends to $+\infty$,
$\kappa_{1}, \cdots, \kappa_{n-1}$ denote the principal curvatures
of $\partial M^{n}$ associated with $\{e_{1}, \cdots, e_{n-1}\}$. By
Lemma \ref{lemma 6.4}, \eqref{f14} can be estimated from below.
Taking $r$ small and $\tilde{h}^{'}$ large, we thus obtain the
estimate

\begin{equation*}
\frac{\mathcal{F}_{k-1}}{\mathcal{F}_{l-1}}\left( Dp^{'}, D^{2}\tilde{v}^{'} \right) \geq \delta\left( \tilde{h}^{'} \right)^{k-l},
\end{equation*}
where $\delta$ is a positive constant under control. This allows the estimate of $\frac{\mathcal{F}_{k}}{\mathcal{F}_{l}}\left( Dp,D^{2}\tilde{v} \right)$: from \eqref{f13}, we have

\begin{equation*}
\frac{\mathcal{F}_{k}}{\mathcal{F}_{l}}\left( Dp, D^{2}\tilde{v} \right) \geq \delta \left( \tilde{h}^{'} \right)^{k-l}.
\end{equation*}
Since $\tilde{h}^{'}\geq \alpha_{0}(1+|D\tilde{v}|)$ if
$\tilde{h}^{'}$ is chosen sufficiently large ($\alpha_{0}$ positive
constant), we can get \eqref{f9} by taking $\mathcal{B}_{3}$ and
$\mathcal{B}_{4}$ sufficiently large such that $\tilde{h}^{'}$ are
large. In particular, we thus have: $\forall x\in M^{n}\cap
B_{r}(x_{0})$, $D^{2}\tilde{v}(x)\in \Gamma_{k}\left( Du(x)
\right)$. We may then use the concavity of $\left(
\frac{\mathcal{F}_{k}}{\mathcal{F}_{l}} \right)^{\frac{1}{k-l}}$
with respect to $q$ to bound $\sum_{i,j}\frac{\partial}{\partial
q_{ij}}\left( \frac{\mathcal{F}_{k}}{\mathcal{F}_{l}}(u)
\right)\tilde{v}_{ij}$ from below:

\begin{equation*}
\frac{1}{k-l}\sum_{i,j}\frac{\partial}{\partial q_{ij}}\left(  \frac{\mathcal{F}_{k}}{\mathcal{F}_{l}}(u) \right)\tilde{v}_{ij} \geq \left( \frac{\mathcal{F}_{k}}{\mathcal{F}_{l}} \right)^{1-\frac{1}{k-l}}(u)\left( \frac{\mathcal{F}_{k}}{\mathcal{F}_{l}} \right)^{\frac{1}{k-l}}\left( Du, D^{2}\tilde{v} \right),
\end{equation*}
where $\left( \frac{\mathcal{F}_{k}}{\mathcal{F}_{l}}
\right)^{1-\frac{1}{k-l}}(u)$ is itself estimated from below. In
view of \eqref{f9}, $\left( \frac{\mathcal{F}_{k}}{\mathcal{F}_{l}}
\right)^{\frac{1}{k-l}}\left( Du, D^{2}\tilde{v} \right)$ is large
than $\mathcal{B}_{5}\left( 1+|D\tilde{v}| \right)$ if
$\mathcal{B}_{3}$ and $\mathcal{B}_{4}$ are suitable parameters,
where $\mathcal{B}_{5}$ is as large as desired, i.e. Lemma
\ref{lemma 6.3} holds for sufficiently large $\mathcal{B}_{3}$,
$\mathcal{B}_{4}$ under control.
\end{proof}

$\mathit{Estimate~of~mixed~second~derivatives}$. Let
$\{e_{1},\cdots,e_{n}\}$ still denote an adapted basis at the
boundary-point $x_{0}$ and let $t\in\{1,\cdots,n-1\}$. Our purpose
is to estimate $u_{tn}(x_{0})$.  For any $ x\in
\overline{M^{n}}\cap\overline{B}_{r}(x_{0})$, let $\xi = e_{t} +
\tilde{\rho}_{t}(x^{'})e_{n}$, where $\tilde{\rho}$ denotes the
function locally defined on $T_{x_{0}}\partial M^{n}$ whose graph is
$\partial M^{n}$. Set
$$\mathfrak{g}(x,p) = \left\langle p, \xi(x)\right\rangle = p_{t} + \tilde{\rho}_{t}(x^{'})p_{n}.$$

Using Lemma \ref{lemma 6.1} and Lemma \ref{lemma 6.2}, there exists
the $\mathcal{B} = \mathcal{B}\left( n,k,l,\rho,\mathcal{B}_{0},
\Vert\mathfrak{g}\Vert_{1,D_{r,\rho}} \right)$, $\mathcal{B}_{1} =
\mathcal{B}_{1}\left(
n,k,l,M^{n},\psi,\rho,\mathcal{B}_{0},\Vert\mathfrak{g}\Vert_{2,D_{r,\rho}}
\right)$ and $b = b\left( n,\mathcal{B}_{1},
\Vert\mathfrak{g}\Vert_{0,D_{r,\rho}}, \mathcal{B} \right)$
sufficiently large such that the function
\begin{equation*}
\begin{aligned}
\widetilde{\mathbb{W}} &=
\exp\left( -\mathcal{B}_{1}u_{\xi}(x_{0}) \right) \\
&\quad
-\exp\left( -\mathcal{B}_{1}u_{\xi} + \frac{\mathcal{B}\mathcal{B}_{1}}{2}\sum_{s=1}^{n-1}\left( u_{s} - u_{s}(x_{0}) \right)^{2} \right) - b|x-x_{0}|^{2}
\end{aligned}
\end{equation*}
satisfies
\begin{equation*}
\sum_{i,j}\frac{\partial}{\partial q_{ij}}\left( \frac{\mathcal{F}_{k}}{\mathcal{F}_{l}}(u) \right)\widetilde{\mathbb{W}}_{ij} \leq \mathcal{B}_{2}\left( 1+|D\widetilde{\mathbb{W}}| \right),
\end{equation*}
where $\mathcal{B}_{2} = \mathcal{B}_{2}\left( \mathcal{B}_{1},
b,r,\Vert\mathfrak{g}\Vert_{0,D_{r,\rho}}, \mathcal{B} \right)$.
Define
\begin{equation*}
\begin{aligned}
\tilde{\psi}(x^{'}) &=
\exp\left( -\mathcal{B}_{1}\varphi_{\xi}(x_{0}) \right) - \exp\left( -\mathcal{B}_{1}\varphi_{\xi}( x^{'} ) \right)\cdot\\
&\quad \exp\left(\mathcal{B}\mathcal{B}_{1}\sum_{s=1}^{n-1}\left(
\varphi_{s}( x^{'} ) - \varphi_{s}(x_{0}) \right)^{2}
+2\mathcal{B}_{1}\mathcal{B} \left( |\varphi_{n}( x^{'} )|^{2} +1
\right)|D\tilde{\rho}(x^{'})| ^{2}\right)
\end{aligned}
\end{equation*}
and $\tilde{v} = -a_{0}|x-x_{0}|^{2} - \tilde{h}(d) + \tilde{\psi}(
x^{'} )$, where $\tilde{h}(d) = \mathcal{B}_{3}\left(
1-e^{-\mathcal{B}_{4}d} \right)$. For suitable constants $a_{0}$,
$\mathcal{B}_{3}$, $\mathcal{B}_{4}$, from Lemma \ref{lemma 6.3}, we
have $\tilde{v} \leq \widetilde{\mathbb{W}}$ on $\partial\left(
M^{n}\cap B_{r}(x_{0}) \right)$ and
$\sum_{i,j}\frac{\partial}{\partial q_{ij}}\left(
\frac{\mathcal{F}_{k}}{\mathcal{F}_{l}}(u) \right)\tilde{v}_{ij}
\geq \mathcal{B}_{2}\left( 1+|D\tilde{v}| \right)$ on
$\overline{M^{n}}\cap \bar{B}_{r}(x_{0})$. By the comparison
principle in \cite[Lemma 4.5]{Bayard}, we have
$\tilde{v}\leq\widetilde{\mathbb{W}}$ on
$\overline{M^{n}}\cap\bar{B}_{r}(x_{0})$, and then since
$\tilde{v}(x_{0}) = \widetilde{\mathbb{W}}(x_{0})$, we get
$\tilde{v}_{n}(x_{0})\leq\widetilde{\mathbb{W}}_{n}(x_{0})$, i.e.,
$\tilde{v}_{n}(x_{0})\leq \mathcal{B}_{1}u_{tn}(x_{0})\exp\left(
-\mathcal{B}_{1}\varphi_{t}(x_{0}) \right)$. In other words,
$$u_{tn}(x_{0}) \geq \mathcal{B}_{6},$$
where $\mathcal{B}_{6} = \mathcal{B}_{6}\left(
n,k,l,M^{n},\psi,\rho,\Vert\varphi\Vert_{3,\overline{M^{n}}}
\right)$. To estimate $u_{tn}(x_{0})$ from above, we do the similar
argument with $\mathfrak{g}(x,p) = -p_{t} - \tilde{\rho}_{t}( x^{'}
)p_{n}$.

\vspace{5mm}

$\mathit{Estimate~of~normal~second~derivatives}$. Now, we want to
estimate an upper bound on $h_{nn}$. A lower bound on $h_{nn}$
easily follows from $\sigma_{1}(u)>0$ and the estimates of
tangential and mixed second derivatives. An upper bound on $h_{nn}$
on $\partial M^{n}$ amounts to a lower bound on
\begin{equation*}
\mathcal{A}_{k,l} = \frac{\partial}{\partial h_{nn}}\left( \frac{\mathcal{F}_{k}}{\mathcal{F}_{l}}(u) \right)
\end{equation*}
on $\partial M^{n}$ by a positive quantity under control, since
\begin{equation*}
\frac{\mathcal{F}_{k}}{\mathcal{F}_{l}}(u) = \mathcal{A}_{k,l}\cdot h_{nn} + \mathcal{B}_{k,l} = \psi,
\end{equation*}
where $\mathcal{B}_{k,l}$ depends only on $u$, $Du$, the tangential
and mixied second derivatives of $u$, which are already estimated.
\begin{lemma}\label{lemma 6.5}
$\mathcal{A}_{k,l}$ is given by
\begin{equation*}
\begin{aligned}
\mathcal{A}_{k,l} &= \frac{1}{u^{2}}\frac{u^{2} - |\partial\varphi|^{2}}{u^{2}v^{2}}\cdot
\frac{\mathcal{F}_{k-1}(\partial\varphi, \partial^{2}\varphi+u_{\gamma}\partial\gamma)\cdot\mathcal{F}_{l} -
\mathcal{F}_{l-1}(\partial\varphi, \partial^{2}\varphi + u_{\gamma}\partial\gamma)\cdot\mathcal{F}_{k}}{\mathcal{F}_{l}^{2}}\\
&=
\frac{1}{u^{2}}\frac{u^{2} - |\partial\varphi|^{2}}{u^{2}v^{2}}\cdot\mathcal{F}_{k-1,l-1}(\partial\varphi,\partial^{2}\varphi+u_{\gamma}\partial\gamma),
\end{aligned}
\end{equation*}
where $\partial$ denotes the tangential gradient and $\gamma$ the
future-directed unit normal to $\partial M^{n}$,
$\mathcal{F}_{k-1,l-1}:=\frac{\mathcal{F}_{k-1}\mathcal{F}_{l} -
\mathcal{F}_{l-1}\mathcal{F}_{k}}{\mathcal{F}_{l}^{2}} $.
\end{lemma}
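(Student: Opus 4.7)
My plan is to compute $\mathcal{A}_{k,l}$ by an explicit linearization of $\mathcal{F}_k/\mathcal{F}_l$ with respect to the $(n,n)$-entry at the boundary point $x_0\in\partial M^n$. I would work in an adapted orthonormal frame $\{e_1,\ldots,e_{n-1},e_n=\gamma\}$ of $T_{x_0}\mathscr{H}^n(1)$ in which the first $n-1$ vectors span $T_{x_0}\partial M^n$ and $\gamma$ is the inward unit normal to $\partial M^n$. In this frame $|Du|^2=|\partial\varphi|^2+u_\gamma^2$, and a direct substitution into $g^{ij}(Du)=\frac{1}{u^2}\bigl(\sigma^{ij}+\frac{u^iu^j}{u^2v^2}\bigr)$ gives the key identity
\[
g^{nn} \;=\; \frac{1}{u^2}\cdot\frac{u^2v^2+u_\gamma^2}{u^2v^2} \;=\; \frac{1}{u^2}\cdot\frac{u^2-|\partial\varphi|^2}{u^2v^2},
\]
which I recognise as the outer prefactor in the claimed formula for $\mathcal{A}_{k,l}$.

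Next, since $u=\varphi$ along $\partial M^n$, the tangential Hessian of $u$ at $x_0$ is fully determined by the boundary data by the standard decomposition
\[
u_{\alpha\beta}\bigl|_{\partial M^n} \;=\; \partial^2_{\alpha\beta}\varphi + u_\gamma(\partial\gamma)_{\alpha\beta},\qquad 1\le\alpha,\beta\le n-1,
\]
where $\partial\gamma$ is the second fundamental form of $\partial M^n\subset M^n$. Thus the $(n-1)\times(n-1)$ tangential matrix $q^{'}$ entering any operator $\mathcal{F}_{k-1}(\partial\varphi,q^{'})$ is exactly $\partial^2\varphi+u_\gamma\partial\gamma$. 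Combining this with the block expansion of $\mathcal{F}_k(Dp,q)$ already exploited in \eqref{f10}--\eqref{f11}, which isolates the linear-in-$q_{nn}$ part of $\mathcal{F}_k$ with coefficient $g^{nn}\cdot\mathcal{F}_{k-1}(\partial\varphi,q^{'})$, I obtain
\[
\frac{\partial \mathcal{F}_k}{\partial h_{nn}} \;=\; g^{nn}\cdot \mathcal{F}_{k-1}\bigl(\partial\varphi,\, \partial^2\varphi + u_\gamma \partial\gamma\bigr),
\]
together with the same identity for $\mathcal{F}_l$. Inserting both into the quotient rule
\[
\mathcal{A}_{k,l} \;=\; \frac{(\partial_{h_{nn}}\mathcal{F}_k)\,\mathcal{F}_l - \mathcal{F}_k\,(\partial_{h_{nn}}\mathcal{F}_l)}{\mathcal{F}_l^2}
\]
and factoring out the common $g^{nn}$ reproduces the stated expression, with $\mathcal{F}_{k-1,l-1}(\partial\varphi,\partial^2\varphi+u_\gamma\partial\gamma)$ appearing on the right by its very definition.

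The main obstacle is the rigorous justification of the claim that the coefficient of $q_{nn}$ in $\mathcal{F}_k(Dp,q)$ is exactly $g^{nn}\cdot\mathcal{F}_{k-1}(Dp^{'},q^{'})$, with the tangential data being the unadorned $\partial^2\varphi+u_\gamma\partial\gamma$. This requires tracking how the matrix $g^{ij}(Du)\,q$ decouples under the block structure of the adapted frame and verifying that the off-diagonal entries $g^{\alpha n}$ together with the extra terms $u\sigma_{ij}-\tfrac{2}{u}u_iu_j$ built into $h_{ij}$ do not contribute to the linear-in-$h_{nn}$ part. This is precisely the calculation encoded in \eqref{f10}--\eqref{f11}, but now done in differentiated form; once it is carried out for $\mathcal{F}_k$ it applies verbatim, with $k$ replaced by $l$, to $\mathcal{F}_l$, so that a single $g^{nn}$ factors cleanly out of the quotient and the remaining combination collapses to the stated $\mathcal{F}_{k-1,l-1}$ expression.
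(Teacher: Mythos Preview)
The paper states this lemma without proof, so there is no argument of its own to compare against. Your approach is correct and is the natural one: the computation of
\[
g^{nn}=\frac{1}{u^2}\cdot\frac{u^2v^2+u_\gamma^2}{u^2v^2}=\frac{1}{u^2}\cdot\frac{u^2-|\partial\varphi|^2}{u^2v^2}
\]
in the adapted frame is right, the identification of the tangential block with $\partial^2\varphi+u_\gamma\,\partial\gamma$ via the boundary condition $u=\varphi$ is the standard decomposition of the Hessian at a boundary point, and the quotient rule then reduces the claim to knowing the coefficient of $q_{nn}$ in $\mathcal{F}_k(Dp,q)$. That coefficient is exactly what the paper has already recorded in \eqref{f10}--\eqref{f11} (there written as $\tfrac{1}{p^2}\tfrac{p^2-|Dp'|^2}{p^2-|Dp|^2}\,\mathcal{F}_{k-1}(Dp',q')$, i.e.\ $g^{nn}\mathcal{F}_{k-1}$) for the proof of Lemma~\ref{lemma 6.3}. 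The obstacle you flag---checking that the off-diagonal $g^{\alpha n}$ and the lower-order pieces built into $h_{ij}$ contribute nothing to the linear-in-$q_{nn}$ part---is precisely the content of \eqref{f10}, which the paper asserts as ``easy to show''; once the remainder $O(|q_{(n,n)}|^k)$ there is understood to be independent of $q_{nn}$, differentiation and the quotient rule yield the stated formula in one line.
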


We continue to proceed as follows: estimate $h_{nn}$ via the
previous method at a point $y$ of $\partial M^{n}$ where
$\mathcal{F}_{k-1,l-1}(\partial\varphi,\partial^{2}\varphi+u_{\gamma}\partial\gamma)$
is minimum. It implies a lower bound on
$\mathcal{F}_{k-1,l-1}(\partial\varphi,\partial^{2}\varphi+u_{\gamma}\partial\gamma)$
because
\begin{equation*}
\mathcal{F}_{k-1,l-1}(\partial\varphi,\partial^{2}\varphi + u_{\gamma}\partial\gamma)(y) \geq
\frac{\mathcal{F}_{k-1}(\partial\varphi,\partial^{2}\varphi + u_{\gamma}\partial\gamma)}{\mathcal{F}_{l}}(y)
\geq
\mathcal{B}_{6}\left( \frac{\mathcal{F}_{k}}{\mathcal{F}_{l}}(u) \right)(y).
\end{equation*}
For the last inequality see \cite{nmi}, and the positive constant
$\mathcal{B}_{6}$ depends on $n$, $k$, $l$, $\rho$, $M^{n}$. From
the definition of $y$, the function
$\mathcal{F}_{k-1,l-1}(\partial\varphi,\partial^{2}\varphi+u_{\gamma}\partial\gamma)$
admits itself a lower bound on $\partial M^{n}$, and so does
$\mathcal{A}_{k,l}$, which yields an estimate on the second normal
derivatives at every boundary-point.

Let us take $\mathfrak{g}(x,p) =
\mathcal{F}_{k-1,l-1}\left(\partial\varphi(
x^{'}),\partial^{2}\varphi( x^{'})+\langle p, \gamma( x^{'}
)\rangle\partial\gamma( x^{'})\right)$, where $x=(x^{'},x_{n})$ in
the basis $\{e_{1},\cdots,e_{n}\}$. A priori $\mathfrak{g}$ is
concave with respect to $p$ only for $k=2$, $l=0$, then
$\mathfrak{g}(x,p) = \mathcal{F}_{1}\left(\partial\varphi(
x^{'}),\partial^{2}\varphi( x^{'})+\langle p, \gamma( x^{'}
)\rangle\partial\gamma( x^{'})\right)$. The rest is almost the same
as the argument in \cite[pp. 27-28]{Bayard}, so we omit it.

\begin{remark}
\rm{ Nearly the whole part of our proof in this section is valid for
any $k=2,\cdots,n$, $0\leq l\leq k-2$. However, the auxiliary
function $\mathfrak{g}(x,p)$ in Lemma \ref{lemma 6.1} is concave
with respect to $p$, and then we need to use the constraint $k=2$,
$l=0$ to estimate the double normal second derivatives on the
boundary. }
\end{remark}

By the method of continuity and the a prior estimates obtained here,
using a similar argument to that in \cite[Section 5]{GLM}, the
existence and uniqueness of the PCP \ref{main equation} with $k=2$,
$l=0$ can be proven. This completes the proof of Theorem \ref{main
1.3}.

\section*{Acknowledgments}
This work is partially supported by the NSF of China (Grant Nos.
11801496 and 11926352), the Fok Ying-Tung Education Foundation
(China) and  Hubei Key Laboratory of Applied Mathematics (Hubei
University).

\vspace {1 cm}

\end{document}